\def\co{\colon\thinspace}
\DeclareMathAlphabet{\mathsfsl}{OT1}{cmss}{m}{sl}
\newtheorem{thm}{Theorem}[section]
\newtheorem{lem}[thm]{Lemma}
\newtheorem{prop}[thm]{Proposition}
\theoremstyle{definition}
\newtheorem{defn}[thm]{Definition}
\newtheorem{correction}[thm]{Correction}
\begin{document}

\title{Dehn surgeries on knots in product manifolds}

\author{{\Large Yi NI}\\{\normalsize Department of Mathematics, Caltech, MC 253-37}\\
{\normalsize 1200 E California Blvd, Pasadena, CA
91125}\\{\small\it Emai\/l\/:\quad\rm yni@caltech.edu}}

\date{}
\maketitle

\begin{abstract}
We show that if a surgery on a knot in a product sutured manifold
yields the same product sutured manifold, then this knot is a
$0$-- or $1$--crossing knot. The proof uses techniques from
sutured manifold theory.
\end{abstract}

{\it\hfill Dedicated to the memory of Professor Andrew Lange}

\section{Introduction}

An interesting problem on Dehn surgery is: when does a surgery on
a knot yield a manifold homeomorphic to the original ambient
manifold? The most famous result in this direction is the Knot
Complement Theorem proved by Gordon and Luecke \cite{GL}: when the
ambient manifold is $S^3$, only the unknot admits surgeries which
yield $S^3$.

In this paper, we are going to study this problem for knots in
surfaces times an interval. Our main result is as follows.

\begin{thm}\label{thm:SurgProd}
Suppose $F$ is a compact surface, $K\subset F\times I$ is a knot.
Suppose $\alpha$ is a nontrivial slope on $K$, and $N(\alpha)$ is
the manifold obtained from $F\times I$ via the $\alpha$--surgery
on $K$. If the pair $(N(\alpha),(\partial F)\times I)$ is
homeomorphic to the pair $(F\times I,(\partial F)\times I)$, then
one can isotope $K$ such that its image on $F$ under the natural
projection $$p\co F\times I\to F$$ has either no crossing or
exactly one crossing.

The slope $\alpha$ can be determined as follows. Let $\lambda_b$
be the ``blackboard'' frame of $K$ associated with the previous
projection. Namely, $\lambda_b$ is the frame specified by the
surface $F$. When the projection has no crossing, $\alpha=\frac1n$
for some integer $n$ with respect to $\lambda_b$; when the minimal projection has exactly one
crossing, $\alpha=\lambda_b$.
\end{thm}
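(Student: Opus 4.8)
The plan is to convert the Dehn surgery hypothesis into a statement purely about sutured manifolds and then to exploit the rigidity of product sutured manifolds. Write $Y=F\times I$ and let $C=Y\setminus\mathring{N}(K)$ be the exterior of $K$, regarded as a balanced sutured manifold $(C,\gamma)$ whose sutures consist of $(\partial F)\times I$ on the vertical boundary together with a pair of parallel curves on the torus $T=\partial N(K)$. The hypothesis supplies two Dehn fillings of $C$: filling the meridian $\mu$ recovers the original product $(F\times I,(\partial F)\times I)$ with $K$ as a core, while filling the surgery slope $\alpha$ recovers $(N(\alpha),(\partial F)\times I)\cong(F\times I,(\partial F)\times I)$ with the surgery core $K'$ inside. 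Thus $C$ is the common exterior of two knots, each lying in a product sutured manifold. The engine of the argument is the detection of products among taut balanced sutured manifolds (Gabai's norm/taut-foliation characterization, equivalently the statement that such a manifold is a product exactly when its sutured Floer homology has rank one), together with the control of tautness under Dehn filling.

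For the core argument I would first check that $(C,\gamma)$ is taut and irreducible: $K$ is not contained in a ball, so $C$ inherits irreducibility and a norm-minimizing horizontal surface from the taut product $Y$. Next I would realize a minimal projection of $K$ with $c$ crossings \emph{geometrically} inside $C$: the preimages under $p$ of a suitable collection of arcs, together with the bands at the crossings, furnish an explicit family of product annuli and disks along which to run a sutured manifold decomposition, with the number of decomposition steps controlled by $c$. The decisive point is then a slope comparison. Since both the $\mu$--filling and the $\alpha$--filling are taut product sutured manifolds, Gabai's theory of tautness under filling forces the two slopes on $T$ to be as close as possible once measured against the blackboard framing $\lambda_b$; combined with the decomposition count this minimal distance is incompatible with $c\ge 2$, so $c\le 1$.

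Having bounded the crossing number, I would read off the slope in each case. When $c=0$ the curve $p(K)$ is embedded and $K$ is isotopic into a level surface $F\times\{t\}$; then $K$ is the core of an embedded annulus coming from $p(K)$, and twisting $n$ times along that annulus is a self-homeomorphism of the pair realizing exactly the slopes $\alpha=\tfrac1n$ relative to $\lambda_b$. When $c=1$ the single crossing provides a band whose associated twist is a self-homeomorphism precisely at the blackboard slope $\alpha=\lambda_b$, and one checks that $\lambda_b$--surgery resolves the crossing into an embedded projection and returns a product. The final step is to promote the sutured-manifold conclusion to an honest isotopy of $K$ realizing a projection with at most one crossing.

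\textbf{Main obstacle.} The delicate part is the rigidity step: proving that two distinct product fillings of the same taut exterior force the fillings' slopes to minimal distance \emph{and} force $C$ itself to decompose using at most one crossing. Managing the sutured manifold hierarchy so that the crossing count of a \emph{minimal} projection (not merely of some projection) is pinned to $\le 1$, and cleanly excluding the $c\ge 2$ configurations, is where the real work lies; by comparison, the determination of the slope $\alpha$ in each surviving case is routine framing bookkeeping.
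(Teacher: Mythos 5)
Your outline identifies the right general toolbox (tautness of sutured manifolds surviving Dehn fillings, product detection), but the two steps you label as the ``engine'' are not arguments, and I do not believe they can be made into arguments along the lines you sketch. First, there is no mechanism by which a \emph{minimal crossing projection} of $K$ gets realized as a sutured hierarchy whose length is the crossing number $c$; a projection is a planar-diagrammatic object, and nothing in sutured manifold theory sees it. The paper never attempts this. Instead it bounds the \emph{homology} of the part of $F$ that $K$ essentially fills: letting $G$ be the complement of a maximal essential subsurface disjoint from $K$, it proves (Proposition~\ref{prop:SubDim1}) that the subspace of $H_1(G,\partial G;\mathbb Q)$ annihilating $i_*[K]$ has dimension at most $1$, whence $G$ is an annulus or a pair of pants (Proposition~\ref{prop:AorPants}). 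That step is carried out not by counting decompositions but by comparing relative Euler classes of taut foliations induced by the same decomposing surface in the two fillings $X(\infty)$ and $X(\alpha)$ (Proposition~\ref{prop:EuEqual}, Lemma~\ref{lem:TwoEulers}) against the strict inequality of Lemma~\ref{lem:SumLarge}. Second, your ``slope comparison'' step --- that two product fillings force the slopes to minimal distance against $\lambda_b$ --- is circular as stated ($\lambda_b$ is only defined once a projection is in hand) and is essentially the full content of the theorem; asserting that Gabai's filling theorem ``forces'' it is not a proof. In the paper the slope is extracted case by case: in the annulus case from Gabai's classification of surgeries on $0$-- and $1$--bridge braids in solid tori yielding solid tori, and in the pair-of-pants case from an explicit Thurston norm computation ($x_\infty(V-U+m[G])=m$ versus $x_\alpha(V-U+m[G])=|m-2|$) that produces a decomposing surface after which Theorem~\ref{thm:NormRed} places $K$ on an annulus leaf with the correct framing.

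Two further concrete omissions. Your strategy must in particular handle $F=D^2$, where the statement \emph{is} the Gordon--Luecke Knot Complement Theorem; no amount of product-rigidity bookkeeping in the sutured exterior will reprove that, so it has to be quoted, as the paper does. And you do not address satellite knots: the reduction to the case where the exterior is $\partial(\mathrm{Nd}(K))$--atoroidal requires Gabai's results on surgery on knots in solid tori and a separate argument ruling out nontrivial companions. As written, the proposal is an honest description of where the difficulty lies rather than a route past it.
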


It is easy to see the surgeries in the statement of
Theorem~\ref{thm:SurgProd} do not change the homeomorphism type of
the pair $(F\times I,(\partial F)\times I)$. In fact, when $K$ is
a $0$--crossing knot, it is clear that the $\frac1n$--surgery preserves
the homeomorphism type of the pair. When $K$ is a
$1$--crossing knot, we can add a one-handle to $F\times\frac12$
near the crossing to get a Heegaard surface $F'$ for $F\times I$.
$K$ can be embedded into $F'$ as in Figure~\ref{fig:onehandle}.
$F'$ splits $F\times I$ into two parts $U_0,U_1$, where $U_0$ is
$F\times[0,\frac12]$ with a one-handle added to $F\times\frac12$,
and $U_1$ is $F\times[\frac12,1]$ with a one-handle added to
$-F\times\frac12$. The embedding of $K$ can be chosen such that
$K$ goes through each of the two one-handles exactly once. Now the
blackboard frame $\lambda_b$ is the frame specified by $F'$, and
the $\lambda_b$--surgery on $F'$ cancels each one-handle with a
two-handle. Hence the new pair is still homeomorphic to $(F\times
I,(\partial F)\times I)$.

\begin{figure}
\begin{picture}(340,110)
\put(70,0){\scalebox{0.5}{\includegraphics*[10pt,435pt][410pt,
655pt]{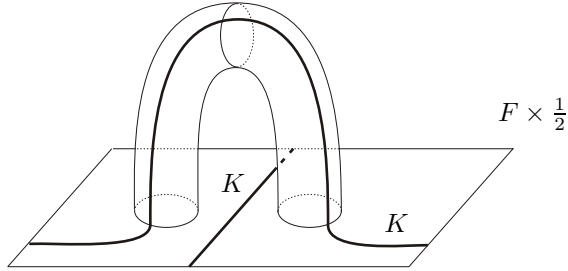}}}

\put(260,60){$F\times\frac12$}

\put(155,33){$K$}

\put(217,18){$K$}

\end{picture}
\caption{A local picture of the crossing}\label{fig:onehandle}
\end{figure}

\begin{defn}
Notations are as in the previous theorem. Fix a product structure
on $(\partial F)\times I$. Up to an isotopy relative to $(\partial
F)\times I$, this product structure uniquely extends to a product
structure $\mathcal P$ on $F\times I$ and a product structure
$\mathcal P_{\alpha}$ on $N(\alpha)$. (This fact can be proved
using Alexander's trick.) Identify $F$ with $F\times1$. Let
$i,i_{\alpha}\co F\times0\to F\times 1$ be the natural identity
maps with respect to $\mathcal P $ and $\mathcal P_{\alpha}$,
respectively.  We call
$$\varphi_{\alpha}=i\circ i_{\alpha}^{-1}\co F\to F$$ {\it the
map induced by the $\alpha$--surgery}. This map $\varphi_{\alpha}$
fixes $\partial F$ pointwise, and is unique up to an isotopy
relative to $\partial F$. Hence $\varphi_{\alpha}$ can be viewed
as an element in the mapping class group $\mathcal{MCG}(F,\partial
F)$.
\end{defn}

The definition of the map $\varphi_{\alpha}$ is justified by the
following lemma.

\begin{lem}\label{lem:Monodromy}
Let $Y(\alpha)$ be the manifold obtained from $F\times S^1$ by
$\alpha$--surgery on $K$. Then $Y(\alpha)$ can be obtained from
$F\times I$ by identifying $(x,0)$ with $(\varphi_{\alpha}(x),1)$
for any $x\in G$.
\end{lem}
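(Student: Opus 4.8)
The plan is to realize $Y(\alpha)$ as a mapping torus and to recognize its monodromy as the map $\varphi_\alpha$ of the preceding definition; the ``$G$'' in the statement is read as $F$.

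First I would set up the cutting picture. Writing $S^1=I/(0\sim1)$, we have $F\times S^1=(F\times I)/\!\sim$, where $(x,0)$ is glued to $(x,1)$; that is, $F\times S^1$ is the mapping torus of $\mathrm{id}_F$. Since $K$ lies in the interior of $F\times I\subset F\times S^1$, it is disjoint from the gluing fibre $F\times\{0\}=F\times\{1\}$, so the $\alpha$--surgery is supported away from that fibre. Hence the surgery on $K\subset F\times S^1$ agrees near $K$ with the surgery on $K\subset F\times I$, and $Y(\alpha)$ is obtained from $N(\alpha)$ by regluing its two horizontal boundary surfaces $F\times 0$ and $F\times 1$ via the original identification $(x,0)\mapsto(x,1)$. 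Crucially, the surgery leaves $\partial(F\times I)$ untouched, so $F\times 0$, $F\times 1$ and $(\partial F)\times I$ are literally the same subsurfaces for $N(\alpha)$ as for $F\times I$, and this regluing is the honest identity in the original boundary coordinates.

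Next I would feed in the product structures. The structure $\mathcal P_\alpha$ provides a homeomorphism $h_\alpha\co N(\alpha)\to F\times I$ that preserves the decomposition of the boundary and restricts to the fixed product structure on the side $(\partial F)\times I$; similarly $\mathcal P$ gives such an $h$ for $F\times I$, which I take to be the standard product, so that the vertical identification $i$ is $\mathrm{id}_F$ and the regluing above is exactly $i$. Transporting the regluing relation through $h_\alpha$ presents $Y(\alpha)$ as a mapping torus $(F\times I)/((x,0)\sim(M(x),1))$; what remains is to compute the monodromy $M$ and match it with $\varphi_\alpha=i\circ i_\alpha^{-1}$.

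This last identification is the point that needs care, and I expect it to be the main obstacle, since a naive computation only pins $M$ down up to conjugacy. Let $\sigma_0=h_\alpha|_{F\times0}$ and $\sigma_1=h_\alpha|_{F\times1}$ be the induced self-homeomorphisms of $F$ (both fixing $\partial F$, as $h_\alpha$ preserves the side structure). Writing $u\co F\times 0\to F\times 1$ for the vertical identity $u(x,0)=(x,1)$, the flow $i_\alpha$ of $\mathcal P_\alpha$ is $h_\alpha^{-1}\circ u\circ h_\alpha$, which as a self-map of $F$ equals $\sigma_1^{-1}\sigma_0$; hence $\varphi_\alpha=i\circ i_\alpha^{-1}=\sigma_0^{-1}\sigma_1$. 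On the other hand, transporting the identity regluing through $h_\alpha$ yields the relation $(\sigma_0(a),0)\sim(\sigma_1(a),1)$, i.e.\ raw monodromy $M=\sigma_1\sigma_0^{-1}=\sigma_0\,\varphi_\alpha\,\sigma_0^{-1}$. I would then remove the conjugation by normalizing the product structure: post-composing $h_\alpha$ with the level-preserving homeomorphism $(x,t)\mapsto(\sigma_0^{-1}(x),t)$ of $F\times I$---which is the identity on $(\partial F)\times I$ because $\sigma_0$ fixes $\partial F$, and so preserves the fixed side structure---replaces the pair $(\sigma_0,\sigma_1)$ by $(\mathrm{id},\sigma_0^{-1}\sigma_1)$ while leaving the intrinsically defined $\varphi_\alpha$ unchanged. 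After this normalization $M=\sigma_0^{-1}\sigma_1=\varphi_\alpha$ exactly, so $Y(\alpha)$ is obtained from $F\times I$ by identifying $(x,0)$ with $(\varphi_\alpha(x),1)$, as asserted. (Alternatively one can simply note that conjugate monodromies give fibrewise homeomorphic mapping tori, which already identifies $Y(\alpha)$ with the mapping torus of $\varphi_\alpha$.)
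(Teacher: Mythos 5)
Your proposal is correct and takes essentially the same route as the paper: realize $Y(\alpha)$ as the identity regluing of $N(\alpha)\cong F\times I$ along its horizontal boundary (the surgery being supported away from the gluing fibre) and then rewrite that gluing in the coordinates supplied by $\mathcal P_{\alpha}$, where it becomes $\varphi_{\alpha}=i\circ i_{\alpha}^{-1}$. The paper's version is just the three-line computation $i(y)=\varphi_{\alpha}(i_{\alpha}(y))=(\varphi_{\alpha}(x),1)$ for $y=(x,0)$ in $\mathcal P_{\alpha}$--coordinates; your conjugation-and-normalization discussion is additional bookkeeping that evaporates once the product structure is read as the $I$--fibration itself, so that only the flow $i_{\alpha}$ (not an auxiliary pair $(\sigma_0,\sigma_1)$) is part of the data, and your observation that ``$x\in G$'' should read ``$x\in F$'' is a correct catch of a typo.
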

\begin{proof}
The manifold $F\times S^1$ is obtained from $F\times I$ by
identifying $y$ with $i(y)$ for each $y\in F\times0$. Let
$y=(x,0)$ with respect to the product structure $\mathcal
P_{\alpha}$ on $N(\alpha)$, then $i_{\alpha}(y)=(x,1)$ with
respect to $\mathcal P_{\alpha}$. We then have
$$i(y)=\varphi_{\alpha}(x,1)=(\varphi_{\alpha}(x),1),$$
since we identify $F$ with $F\times 1$ in the above definition.
Hence $(x,0)$ is identified with $(\varphi_{\alpha}(x),1)$ in
$Y(\alpha)$ for each $x\in F$.
\end{proof}

\begin{prop}\label{prop:MapEle}
Notations are as in Theorem~\ref{thm:SurgProd}. When the
projection of $K$ has no crossing and $\alpha=\frac1n$,
$$\varphi_{\alpha}=\tau^n,$$
where $\tau$ is the righthand Dehn twist along $K\subset F$. When the minimal
projection of $K$ has exactly one crossing, let $a,b,c$ be the
simple closed curves obtained by resolving the crossing in two different ways as in Figure~\ref{fig:1Crossing} and let
$\tau_a,\tau_b,\tau_c$ be the righthand Dehn twists along $a,b,c$.
Then $$\varphi_{\alpha}=\tau_a^2\tau_b^2\tau_c^{-1}$$ when the
crossing is positive, and
$$\varphi_{\alpha}=\tau_a^{-2}\tau_b^{-2}\tau_c$$
when the crossing is negative.
\end{prop}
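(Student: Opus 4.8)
The plan is to exploit Lemma~\ref{lem:Monodromy}: since $Y(\alpha)$ is the mapping torus of $\varphi_\alpha$, identifying $\varphi_\alpha$ amounts to understanding how the vertical product structure on $F\times I$ is twisted by the $\alpha$--surgery. The first observation is that this change is supported in an arbitrarily small neighborhood of $K$. Away from a tubular neighborhood $N(K)$ the surgery does nothing, so both product structures $\mathcal P$ and $\mathcal P_\alpha$ may be chosen to agree there; hence $\varphi_\alpha$ is the identity off $p(N(K))$ and the whole computation localizes to a neighborhood of $K$. Thus I only need to set up a good local model in each of the two cases from Theorem~\ref{thm:SurgProd} and read off the resulting mapping class.

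\emph{No-crossing case.} Here $K$ is isotopic to $c\times\{\tfrac12\}$ for an embedded curve $c\subset F$; its tubular neighborhood is a solid torus whose meridian bounds a disk transverse to $F$ and whose $\lambda_b$--longitude is the surface push-off. The slope $\tfrac1n$ is then, up to framing conventions, the classical statement that $\tfrac1n$--surgery on $c\times\{pt\}\subset F\times S^1$ with respect to the surface framing yields the $F$--bundle over $S^1$ with monodromy $\tau_c^{\,n}$. I would verify this by writing the regluing of the surgery solid torus explicitly in coordinates $c\times D^2$ and checking that the reparametrized vertical flow from $F\times 0$ to $F\times 1$ differs from the original by $n$ Dehn twists along $c$; the direction of the twist is pinned down by combining the right-hand convention for $\tau$ with the orientation of $I$. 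By Lemma~\ref{lem:Monodromy} this yields $\varphi_\alpha=\tau^n$.

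\emph{One-crossing case.} I would use the description from the introduction. After isotopy $K$ lies on the Heegaard surface $F'$ obtained by attaching a one-handle to $F\times\{\tfrac12\}$ near the crossing, running once through each of the one-handles of $U_0$ and $U_1$, and $\alpha=\lambda_b$ is the $F'$--framed surgery that cancels each one-handle against a two-handle (so the ambient manifold returns to $F\times I$ and $\varphi_\alpha$ is a well-defined self-map of $F$). Resolving the crossing in its two planar ways produces the curves of Figure~\ref{fig:1Crossing}: the resolution splitting $K$ into two components gives $a$ and $b$, and the other gives the single curve $c$. The key step is to factor the single $\lambda_b$--surgery, by handle slides and isotopies inside the local handle region, into Dehn twists supported along $a,b,c$, and then to evaluate each factor by the no-crossing computation above (applied to parallel copies of the resolution curves). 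Because $K$ passes through the crossing region twice, each of $a,b$ is met by two parallel strands and is accordingly twisted twice, while $c$ is twisted once in the opposite sense, giving $\varphi_\alpha=\tau_a^2\tau_b^2\tau_c^{-1}$; reversing the over/under datum at the crossing reverses every twist and yields $\tau_a^{-2}\tau_b^{-2}\tau_c$.

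The hard part will be the one-crossing computation, specifically justifying the factorization of the single $\lambda_b$--surgery into $\tau_a^{\pm2}\tau_b^{\pm2}\tau_c^{\mp1}$ and getting every exponent and sign correct. This demands careful bookkeeping of the framings at the crossing, of the two passages of $K$ through the handle, and of which planar resolution produces which curve. To anchor the signs and exponents I would first carry out the calculation on a small explicit example, such as $F$ a once-punctured torus or an annulus carrying a standard one-crossing $K$, where both $\varphi_\alpha$ and the twist product can be computed independently and compared before arguing the general local model.
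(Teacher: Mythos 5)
Your localization step and the no--crossing case are fine, and they agree with the paper: $\varphi_\alpha$ is supported in a neighborhood of the projection of $K$, and for a knot in a fibre the $\tfrac1n$--surgery/Dehn-twist correspondence gives $\varphi_\alpha=\tau^n$. The genuine gap is the one--crossing case, which is the entire content of the proposition. You assert that the single $\lambda_b$--surgery ``factors, by handle slides and isotopies, into Dehn twists supported along $a,b,c$'' and then read off the exponents by counting strands: ``each of $a,b$ is met by two parallel strands and is accordingly twisted twice, while $c$ is twisted once in the opposite sense.'' Neither half of this is an argument. The factorization is exactly what has to be proved (and since $K$ is not isotopic into a fibre, the surgery-equals-twist correspondence you invoke in the no--crossing case does not apply to it directly), and the strand count does not produce the exponents: there is no mechanism by which ``two strands near $a$'' yields $\tau_a^2$, nor any reason given for the exponent $-1$ on $\tau_c$ other than that it is the answer. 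Deferring this to ``careful bookkeeping'' plus verification on one example leaves the proof with no actual derivation of $\tau_a^{\pm2}\tau_b^{\pm2}\tau_c^{\mp1}$.

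The paper closes exactly this gap by an algebraic argument that avoids factoring the surgery at all (Lemma~\ref{lem:MapEle}). A regular neighborhood $G$ of the one--crossing projection is a pair of pants with boundary $a\cup b\cup c$, and since $\varphi_\alpha$ is supported in $G$ and $\mathcal{MCG}(G,\partial G)\cong\mathbb Z^3$ is generated by the boundary twists, the form $\varphi_\alpha=\tau_a^p\tau_b^q\tau_c^r$ is automatic; only the integers $p,q,r$ need to be found. These are determined by capping off one boundary component at a time and applying the annulus case: capping off $a$ (or $b$) makes $K$ the core of a solid torus with surgery slope $1$ relative to the Seifert framing, and since the capping map sends both $\tau_b$ and $\tau_c$ to the generator and kills $\tau_a$, Lemma~\ref{lem:DehnTwist} gives $q+r=1$ (resp.\ $p+r=1$); capping off $c$ makes $K$ the $(2,\pm1)$--cable, for which Lemma~\ref{lem:FAnnulus} computes the new meridian to be $\mathcal M\pm4\mathcal L$ and hence $\varphi=\tau^{\pm4}$, giving $p+q=\pm4$. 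Solving yields $p=q=\pm2$, $r=\mp1$. Note in particular that the exponent $2$ comes from $4=p^2n$ with winding number $p=2$ in the cable computation --- i.e.\ from the square of the winding number --- not from ``two strands, twisted twice,'' which is why your heuristic cannot be promoted to a proof. If you want to salvage your direct approach, you would need to carry out an honest Kirby-calculus reduction of the $\lambda_b$--framed surgery on $K\subset F'$ to surgeries on curves lying in fibres; the paper's capping trick is considerably cheaper.
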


\begin{figure}
\begin{picture}(340,130)
\put(50,0){\scalebox{0.50}{\includegraphics*[50pt,190pt][530pt,
450pt]{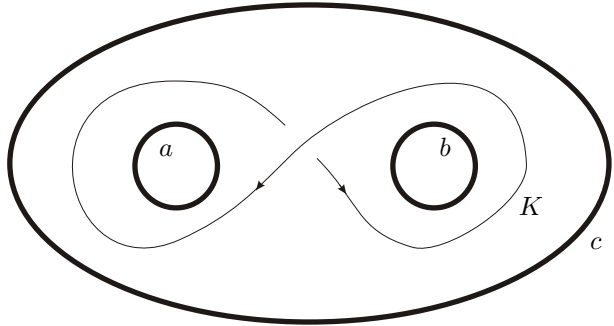}}}

\put(112,70){$a$}

\put(218,70){$b$}

\put(275,35){$c$}

\put(248,48){$K$}

\end{picture}
\caption{A $1$--crossing knot}\label{fig:1Crossing}
\end{figure}

This paper can be compared with Ni \cite{Ni2}. In fact,
Theorem~1.4 in \cite{Ni2} can be restated in a form similar to
Theorem~\ref{thm:SurgProd}.

\begin{thm}\label{thm:NormRed}
Suppose $F$ is a compact surface, $K\subset F\times I$ is a knot
and $\alpha$ is a slope on $K$. Let $N(\alpha)$ be the manifold
obtained by the $\alpha$--surgery on $K$. If $F\times \{0\}$ is
not Thurston norm minimizing in $H_2(N(\alpha),(\partial F)\times
I)$, then there is an ambient isotopy of $F\times I$ which takes
$K$ to a curve in $F\times\{\frac12\}$. Moreover, $\alpha$ is the
frame on $K$ specified by $F\times\{\frac12\}$.
\end{thm}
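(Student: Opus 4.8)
The plan is to recast the hypothesis in the language of sutured manifolds and then run a sutured manifold decomposition argument; this is the substance hidden in the comparison with \cite{Ni2}. Write $M=(F\times I)\setminus N(K)$, with $N(K)$ an open tubular neighborhood, and regard $M$ as a sutured manifold with $R_-(M)=F\times\{0\}$, $R_+(M)=F\times\{1\}$, annular sutures $(\partial F)\times\{\frac12\}$, and a pair of sutures on the torus $T=\partial N(K)$. One may assume $M$ is irreducible: a local knotted summand of $K$ lies in a ball, cannot change the norm of $F\times\{0\}$ after surgery, and so cannot make the hypothesis hold. Granting this, $M$ is taut, being the complement of a knot in the product (hence taut) sutured manifold $(F\times I,(\partial F)\times I)$; in particular the class of $F\times\{0\}$ is Thurston norm minimizing in $M$. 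Now $N(\alpha)$ is exactly the Dehn filling of $M$ along $\alpha$, the annular sutures $(\partial F)\times\{\frac12\}$ persist, and the hypothesis that $F\times\{0\}$ fails to be norm minimizing in $H_2(N(\alpha),(\partial F)\times I)$ says precisely that this filling destroys tautness.

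To exploit this, choose a properly embedded surface $S\subset N(\alpha)$ in the class of $F\times\{0\}$ with $\chi_-(S)<\chi_-(F)$, and put it in minimal position with respect to the surgery solid torus $V$. If $S$ could be made disjoint from $V$ it would lie in $M$ and exhibit a norm reduction there, contradicting the tautness of $M$; hence $S\cap V$ is a nonempty union of meridian disks of $V$. Cutting these off leaves a properly embedded surface $S_0=S\cap M$ whose boundary meets $T$ in parallel essential curves of slope $\alpha$, and capping the $p=|S\cap V|$ disks recovers $\chi(S)=\chi(S_0)+p$. Thus the failure of tautness is witnessed by a decomposing surface $S_0\subset M$ whose boundary slope along $K$ is exactly $\alpha$, and the norm drop for $S$ forces $S_0$ to be as simple as the combinatorics allow.

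The heart of the argument, and the step I expect to be the main obstacle, is upgrading this decomposing surface to the stated geometric conclusion. I would decompose the taut sutured manifold $M$ along $S_0$; since tautness is preserved under taut decompositions while filling along $\alpha$ destroys it, the combinatorics of the decomposition---in the sense of the sutured manifold theory of Gabai and Scharlemann, made sharp by the detection results of \cite{Ni2}---are forced to be extremal, and the hierarchy must terminate in a product sutured manifold. Tracing this back, $S_0$ should reduce to a single vertical annulus meeting $K$ exactly once, that is, a product annulus; such an annulus realizes an ambient isotopy of $K$ into the level surface $F\times\{\frac12\}$, and its two boundary curves, one on $T$ of slope $\alpha$ and one on the level surface, exhibit $\alpha$ as the frame that $F\times\{\frac12\}$ induces on $K$. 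The difficulty is genuine: a purely homological comparison of Thurston norms produces $S_0$ and bounds its complexity but neither places $K$ on a level surface nor pins down $\alpha$ on the nose; extracting both requires the full force of the hierarchy together with the fact, due to \cite{Ni2}, that product sutured manifolds are detected exactly, so that the terminal product can be reassembled as $F\times I$ with $K$ lying on $F\times\{\frac12\}$.
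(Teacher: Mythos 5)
There is a genuine gap, and you have in fact named it yourself: the entire content of the theorem is concentrated in the step you describe as ``the combinatorics of the decomposition are forced to be extremal, and the hierarchy must terminate in a product sutured manifold,'' and no argument is given there. Your setup is fine --- $M$ taut, a norm-reducing surface $S$ in $N(\alpha)$ meeting the surgery solid torus in meridian disks, hence a surface $S_0\subset M$ with boundary slope $\alpha$ on $T$ and $x(S_0)$ smaller than expected --- but decomposing $M$ along $S_0$ and observing that taut decompositions preserve tautness gives you no contradiction and no control: a priori $S_0$ is a complicated surface with many boundary components on $T$, and nothing you have said reduces it to a single annulus meeting $K$ once, places $K$ on a level surface, or pins down $\alpha$ as the surface framing. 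The appeal to ``product sutured manifolds are detected exactly'' is a statement about the bottom of a hierarchy (or about sutured Floer homology) and does not, by itself, propagate back up to a geometric statement about $K$ inside $F\times I$. So the proposal proves the easy reduction and asserts the theorem.

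Note also that the paper does not actually prove Theorem~\ref{thm:NormRed}; it quotes it from \cite{Ni2} and only records that the proof uses Gabai's sutured manifold theory and an argument of Ghiggini \cite{Gh}. The machinery of that proof is, however, visible in Sections~2 and 4 of the present paper, and it is quite different from your route: one does not analyze the norm-reducing surface directly, but instead passes to the subsurface $G=\overline{F\setminus E}$ carrying $K$ (so that the complementary sutured manifold has no product disks or annuli), and then plays off the two fillings $\xi=\infty$ and $\xi=\alpha$ against each other using Gabai's theorem that at most one filling slope destroys tautness (Theorem~\ref{thm:Gabai}), the equality of relative Euler classes of induced taut foliations with that of the fibration (Proposition~\ref{prop:EuEqual}, Lemma~\ref{lem:TwoEulers}), and the Ghiggini--Gabai estimate $y(\mathcal S_p(+C))+y(\mathcal S_q(-C))>(p+q)y(G)$ of Lemma~\ref{lem:SumLarge}. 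It is this quantitative comparison of Euler class evaluations in the two fillings --- not a hierarchy terminating in a product --- that forces $G$ to be small and ultimately places $K$ on a level surface with $\alpha$ the induced framing. None of these ingredients appears in your sketch, so even granting the cited black boxes, the argument as written cannot be completed along the lines you propose.
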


The proof of Theorem~\ref{thm:NormRed} uses Gabai's sutured
manifold theory \cite{G1,G2,G3} and an argument due to Ghiggini
\cite{Gh}. Using a different method, Scharlemann and Thompson
\cite{SchTh2} get the same conclusion of Theorem~\ref{thm:NormRed}
under the assumption that $F\times\{0\}$ is compressible in
$N(\alpha)$.

This paper is organized as follows. In Section~2, we give some
preliminaries on sutured manifold theory and foliations, as well
as a characterization of one-crossing knot projections. In
Section~3, we study some warm-up cases. In Section~4, we use the
argument in the proof of Theorem~\ref{thm:NormRed} to reduce our
problem to the case where $F$ is a pair of pants. In Section~5, we
study this case by analyzing the map induced by surgery and using
a variant of the argument in Ni \cite{Ni2}.

\

\noindent{\bf Acknowledgements.}\quad We are grateful to Danny
Calegari for helpful discussions on mapping class groups. The
author was partially supported by an AIM Five-Year Fellowship and
NSF grant number DMS-0805807.

\section{Preliminaries}

In this section, we are going to review the sutured manifold
theory introduced by Gabai in \cite{G1}. We also state a
uniqueness result for the Euler classes of taut foliations of
fibred manifolds. In addition, we define ``double primitive''
knots in $F\times I$ and show that they are exactly the knots with
a projection consisting of only one crossing.

\subsection{Sutured manifold decompositions}

\begin{defn}
A {\it sutured manifold} $(M,\gamma)$ is a compact oriented
3--manifold $M$ together with a set $\gamma\subset \partial M$ of
pairwise disjoint annuli $A(\gamma)$ and tori $T(\gamma)$. The
core of each component of $A(\gamma)$ is a {\it suture}, and the
set of sutures is denoted by $s(\gamma)$.

Every component of $R(\gamma)=\partial M-\mathrm{int}(\gamma)$ is
oriented. Define $R_+(\gamma)$ (or $R_-(\gamma)$) to be the union
of those components of $R(\gamma)$ whose normal vectors point out
of (or into) $M$. The orientations on $R(\gamma)$ must be coherent
with respect to $s(\gamma)$, hence every component of $A(\gamma)$
lies between a component of $R_+(\gamma)$ and a component of
$R_-(\gamma)$.
\end{defn}

\begin{defn}
Let $S$ be a compact oriented surface with connected components
$S_1,\dots,S_n$. We define
$$x(S)=\sum_i\max\{0,-\chi(S_i)\}.$$
Let $M$ be a compact oriented 3--manifold, $A$ be a compact
codimension--0 submanifold of $\partial M$. Let $h\in H_2(M,A)$.
The {\it Thurston norm} $x(h)$ of $h$ is defined to be the minimal
value of $x(S)$, where $S$ runs over all the properly embedded
surfaces in $M$ with $\partial S\subset A$ and $[S]=h$.
\end{defn}

\begin{defn}
Let $(M,\gamma)$ be a sutured manifold, and $S$ a properly
embedded surface in M, such that no component of $\partial S$
bounds a disk in $R(\gamma)$ and no component of $S$ is a disk
with boundary in $R(\gamma)$. Suppose that for every component
$\lambda$ of $S\cap\gamma$, one of 1)--3) holds:

1) $\lambda$ is a properly embedded non-separating arc in
$\gamma$.

2) $\lambda$ is a simple closed curve in an annular component $A$
of $\gamma$ in the same homology class as $A\cap s(\gamma)$.

3) $\lambda$ is a homotopically nontrivial curve in a toral
component $T$ of $\gamma$, and if $\delta$ is another component of
$T\cap S$, then $\lambda$ and $\delta$ represent the same homology
class in $H_1(T)$.

Then $S$ is called a {\it decomposing surface}, and $S$ defines a
{\it sutured manifold decomposition}
$$(M,\gamma)\stackrel{S}{\rightsquigarrow}(M',\gamma'),$$
where $M'=M-\mathrm{int}(\mathrm{Nd}(S))$ and
\begin{eqnarray*}
\gamma'\;\;&=&(\gamma\cap M')\cup \mathrm{Nd}(S'_+\cap
R_-(\gamma))\cup
\mathrm{Nd}(S'_-\cap R_+(\gamma)),\\
R_+(\gamma')&=&((R_+(\gamma)\cap M')\cup S'_+)-\mathrm{int}(\gamma'),\\
R_-(\gamma')&=&((R_-(\gamma)\cap M')\cup
S'_-)-\mathrm{int}(\gamma'),
\end{eqnarray*}
where $S'_+$ ($S'_-$) is that component of
$\partial\mathrm{Nd}(S)\cap M'$ whose normal vector points out of
(into) $M'$.
\end{defn}

\begin{defn}
A sutured manifold $(M,\gamma)$ is {\it taut}, if $M$ is
irreducible and $R(\gamma)$ is  Thurston norm
minimizing in $H_2(M,\gamma)$.

Suppose $S$ is a decomposing surface in $(M,\gamma)$, $S$
decomposes $(M,\gamma)$ to $(M',\gamma')$. $S$ is {\it taut} if
$(M',\gamma')$ is taut.
\end{defn}

\begin{defn}
Suppose $$(M,\gamma)\stackrel{S}{\rightsquigarrow}(M',\gamma')$$
is a taut decomposition, by \cite{G1} we can extend this
decomposition to a sutured manifold hierarchy of $(M,\gamma)$,
from which we can construct a taut foliation $\mathscr F$ of $M$,
such that $R(\gamma)$ consists of compact leaves of $\mathscr F$.
We then call $\mathscr F$ a {\it foliation  induced by $S$}.
Moreover, when $R_+(\gamma)$ is homeomorphic to $R_-(\gamma)$,
from $M$ we can obtain a manifold $Y$ with boundary consisting of
tori by gluing $R_+(\gamma)$ to $R_-(\gamma)$ via a homeomorphism.
$\mathscr F$ then becomes a taut foliation $\mathscr F_1$ of $Y$.
We also say that $\mathscr F_1$ is a {\it foliation  induced by
$S$}.
\end{defn}

\begin{defn}
A decomposing surface is called a {\it product disk}, if it is a
disk which intersects $s(\gamma)$ in exactly two points. A
decomposing surface is called a {\it product annulus}, if it is an
annulus with one boundary component in $R_+(\gamma)$, and the
other boundary component in $R_-(\gamma)$.
\end{defn}

We recall the main result in Gabai \cite{G2}, which has been
intensively used in Ni \cite{Ni2}. Note that the result is not
stated in its original form, but it is contained in the argument
in \cite{G2}. See also \cite[Theorem~2.8]{Ni2} for a sketch of the
proof.

\begin{defn}
An {\it I-cobordism} between closed connected surfaces $T_0$ and
$T_1$ is a compact $3$--manifold $V$ such that $\partial V=T_0\cup
T_1$ and for $i=0,1$ the induced maps $j_i\co H _1(T_i)\to H_1(V)$
are injective.
\end{defn}

\begin{defn}
Suppose $M$ is a 3-manifold, $T$ is a toral component of $\partial
M$. If all tori in $M$ which are I-cobordant to $T$ in $M$ must be
parallel to $T$, then we say $M$ is {\it $T$--atoroidal}.
\end{defn}

\begin{thm}[Gabai]\label{thm:Gabai}
Let $(M,\gamma)$ be a taut sutured $3$--manifold. $T$ is a toral
component of $\gamma$, $S$ is a decomposing surface such that
$S\cap T=\emptyset$, and the decomposition
$$(M,\gamma)\stackrel{S}{\rightsquigarrow}(M_1,\gamma_1)$$
is taut. Suppose $M$ is $T$--atoroidal, then for any slope
$\alpha$ on $T$ except at most one slope, the decomposition after
Dehn filling
$$(M(\alpha),\gamma\backslash T)\stackrel{S}{\rightsquigarrow}(M_1(\alpha),\gamma_1\backslash T)$$
is taut.\qed
\end{thm}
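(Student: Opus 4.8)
The plan is to certify tautness by producing a taut foliation and then pushing it through the Dehn filling. First I would use the construction already recorded above (a foliation induced by $S$, via \cite{G1}): since $(M,\gamma)\stackrel{S}{\rightsquigarrow}(M_1,\gamma_1)$ is taut, it extends to a taut sutured manifold hierarchy, from which one builds a taut foliation $\mathscr F$ of $M$ carrying $R(\gamma)$ as compact leaves and with $S$ quasi-transverse to $\mathscr F$. The refinement needed here, which is the content of \cite{G2}, is to run this construction relative to the toral suture $T$: since $S\cap T=\emptyset$ the decomposition never touches $T$, so one arranges $\mathscr F$ to be transverse to $T$. Then $\mathscr F$ restricts to a one-dimensional foliation of $T$, and since every foliation of the torus either has a closed leaf or is conjugate to a linear one, this induced foliation determines a well-defined slope $\beta$.

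Next I would take $\beta$ to be the one excluded slope and treat every other filling by extending the foliation across the glued solid torus. For $\alpha\neq\beta$, let $V_\alpha$ be the filling solid torus, whose meridian is $\alpha$. Because $\beta$ is not the meridian of $V_\alpha$, the boundary foliation extends to a taut foliation of $V_\alpha$ by the standard solid-torus model, and gluing gives a taut foliation $\mathscr F(\alpha)$ of $M(\alpha)$ that still carries $R(\gamma\setminus T)$ as compact leaves and keeps $S$ quasi-transverse. Decomposing $\mathscr F(\alpha)$ along $S$ yields a taut foliation of $M_1(\alpha)$ whose compact leaves include $R(\gamma_1\setminus T)$. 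Such a foliation is Reebless, so $M_1(\alpha)$ is irreducible by Novikov and Rosenberg, and its compact leaves are Thurston norm minimizing by the Euler-class inequality of Thurston together with Gabai's argument; hence $(M_1(\alpha),\gamma_1\setminus T)$ is taut, which is the assertion.

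Finally I would invoke $T$--atoroidality to see that no more than one slope is lost. The threat is an essential torus I-cobordant to $T$, which could either let the induced foliation on $T$ carry closed leaves of more than one slope or let a non-meridional filling introduce a reducing sphere or a cheaper norm-minimizing surface disjoint from $R$. Atoroidality forbids every torus I-cobordant to $T$ except those parallel to $T$, pinning $\beta$ down as the unique degeneracy slope and confining the failure of tautness to $\alpha=\beta$.

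The hard part will be the relative foliation construction of the first paragraph: producing a single taut foliation that simultaneously realizes $R(\gamma)$ as leaves, is quasi-transverse to $S$, and meets $T$ transversely in one controlled slope. This is exactly where \cite{G2} is invoked and where the $T$--atoroidal hypothesis is consumed, so the weight of the proof lies in importing that construction rather than in the comparatively soft extend-and-glue step.
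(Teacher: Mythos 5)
The paper itself contains no proof of this theorem: it is quoted from Gabai, with the remark that it is contained in the argument of \cite{G2}, and the reader is pointed to \cite[Theorem~2.8]{Ni2} for a sketch. Your outline is essentially that argument --- extend the taut decomposition to a sutured manifold hierarchy avoiding $T$, build from it a taut foliation controlled along the toral suture, read off a degeneracy slope $\beta$, extend the foliation over every filling $\alpha\neq\beta$, and conclude tautness downstream from Thurston's norm inequality for compact leaves together with Novikov--Rosenberg irreducibility. So in approach you agree with the source the paper defers to, and, like the paper, you ultimately import the hard relative construction from \cite{G2} rather than supplying it.

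Two points in your write-up are genuinely off and would need repair before this counts even as a faithful sketch. First, you keep $S$ only ``quasi-transverse'' to $\mathscr F(\alpha)$ and then propose to ``decompose $\mathscr F(\alpha)$ along $S$''; cutting a foliated manifold along a quasi-transverse surface does not yield a foliation of the cut manifold. What Gabai's construction actually provides (the finite-depth foliation built from a hierarchy of $(M_1,\gamma_1)$ prepended by $S$) is a foliation in which $S$, as well as $R(\gamma)$, appears as a compact leaf; only then does cutting along the leaf $S$ give a taut foliation of $M_1(\alpha)$ carrying $R(\gamma_1\setminus T)$ as compact leaves, whence irreducibility of $M_1(\alpha)$ and norm-minimization of $R(\gamma_1\setminus T)$ in $H_2(M_1(\alpha),\gamma_1\setminus T)$, which is what tautness of the filled decomposition means. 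Second, your dichotomy ``a torus foliation has a closed leaf or is conjugate to a linear one'' is false as stated (Reeb annuli, Denjoy examples); what the construction guarantees is that $\mathscr F|_T$ has no Reeb annular components, so that its closed leaves, if any, all share a single slope $\beta$ --- and if there are no closed leaves, no slope need be excluded at all. Your final paragraph correctly locates the consumption of the $T$--atoroidal hypothesis inside this relative foliation construction, but offers no argument for it; since the paper handles exactly this step by citation to \cite{G2}, that deferral is acceptable here, but it should be presented as a citation rather than as reasoning.
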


A special case of the above theorem is the case $\gamma=\partial
M$, which is the original form in \cite{G2}.

\subsection{Euler classes of foliations}

We will need the Euler classes of foliations.

\begin{defn}
Suppose $Y$ is a compact $3$--manifold with $\partial Y$
consisting of tori. $\mathscr P$ is an oriented plane field transverse to $\partial Y$. Let $T(\partial Y)$ be the tangent
plane field of $\partial Y$. The line field $\mathscr P\cap
T(\partial Y)$ has a natural orientation induced by the
orientations of $\mathscr P$ and $T(\partial Y)$, thus it has a
nowhere vanishing section $v\subset \mathscr P|_{\partial Y}$.
Then one can define the {\it relative Euler class}
$$e(\mathscr P)\in H^2(Y,\partial Y)$$
of $\mathscr P$ to be the obstruction to extending $v$ to a
nowhere vanishing section of $\mathscr P$. When $\mathscr F$ is a
foliation of $Y$ that is transverse to $\partial Y$, let $T\mathscr F$
be the tangent plane field of $\mathscr F$ and let
$e(\mathscr F)=e(T\mathscr F)$.
\end{defn}

\begin{defn}
Suppose $C$ is a properly embedded curve in a compact surface $F$.
We say $C$ is {\it efficient} in $F$ if
$$|C\cap\delta|=|[C]\cdot[\delta]|,\quad\text{for each boundary component $\delta$ of $F$.}$$
Suppose $S$ is a properly embedding surface in compact
$3$--manifold $Y$ with boundary consisting of tori. We say $S$ is
{\it efficient} in $Y$ if $S\cap T$ consists of coherently
oriented
 parallel essential curves for each boundary component $T$ of $Y$.
\end{defn}

\begin{prop}\label{prop:EuEqual}
Suppose $Y$ is a compact $3$--manifold that fibres over $S^1$. Let
$G$ be a fibre of the fibration $\mathscr E$. Suppose $\mathscr F$
is a taut foliation of $Y$ which is transverse to $\partial Y$
such that $G$ is a leaf of $\mathscr F$. Then
$$e(\mathscr F)=e(\mathscr E)\in H^2(Y,\partial Y)/\mathrm{Tors}.$$
\end{prop}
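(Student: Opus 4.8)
The plan is to compare the two Euler classes $e(\mathscr F)$ and $e(\mathscr E)$ by building an explicit homotopy between their tangent plane fields, but since we only need equality modulo torsion, it suffices to show that $e(\mathscr F)$ and $e(\mathscr E)$ pair identically against every class in $H_2(Y,\partial Y)$. In fact, the cleanest route is to evaluate both Euler classes on a generating set of surfaces and use the fact that $H^2(Y,\partial Y)/\mathrm{Tors}$ is detected by its pairing with $H_2(Y,\partial Y)$ (Poincaré–Lefschetz duality pairs this with $H_1(Y)/\mathrm{Tors}$). So first I would fix an efficient properly embedded surface $S$ representing an arbitrary class in $H_2(Y,\partial Y)$ and reduce the whole problem to computing $\langle e(\mathscr F),[S]\rangle$ and $\langle e(\mathscr E),[S]\rangle$.

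**The main tool** should be the standard index/Poincaré–Hopf formula for the relative Euler class of a plane field transverse to $\partial Y$ evaluated against an embedded surface: if $\mathscr P$ is a plane field transverse to $\partial Y$ and $S$ is efficient, then $\langle e(\mathscr P),[S]\rangle$ equals the index sum of the line field $\mathscr P \cap TS$ (the singularities being where $\mathscr P$ is tangent to $S$), with the boundary contribution controlled by the prescribed section $v$ along $\partial Y$. The key observation is that because $S\cap T$ consists of coherently oriented parallel essential curves on each boundary torus $T$, the boundary data for $\mathscr F$ and $\mathscr E$ agree: both plane fields are transverse to $\partial Y$, and the induced line fields on $\partial Y$ that define $v$ can be arranged to be homotopic through nowhere-zero sections along $\partial S$. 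This pins down that any difference between the two pairings is an \emph{interior} contribution, computed entirely from tangencies of $S$ with the leaves of $\mathscr F$ versus the fibres of $\mathscr E$.

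**For the interior computation**, I would exploit that $G$ is simultaneously a fibre of $\mathscr E$ and a leaf of $\mathscr F$. After isotoping $S$ into efficient position with respect to the fibration, the intersection $S\cap G$ is a collection of curves, and one can compute the index contribution of $T\mathscr E\cap TS$ along the fibration directly from the Euler characteristics of the pieces into which the fibres cut $S$; this is the usual calculation giving $\langle e(\mathscr E),[S]\rangle$ in terms of the intersection pattern with the fibre. The parallel claim is that the taut foliation $\mathscr F$, having $G$ as a compact leaf, admits the same index count: near the shared leaf $G$ the two plane fields $T\mathscr F$ and $T\mathscr E$ are homotopic, and tautness forbids the kind of Reeb-type tangencies that would create extra index. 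This is where I expect to invoke that $\mathscr F$ is taut and transverse to $\partial Y$ together with the Thurston-style inequality relating $e(\mathscr F)$ to the Thurston norm, which is sharp precisely on norm-minimizing leaves such as $G$.

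\textbf{The hard part} will be justifying that the interior tangency contributions coincide without an explicit global homotopy between $\mathscr F$ and $\mathscr E$ — a priori the taut foliation can be wildly different from the fibration away from $G$. I expect the resolution to run through Thurston's inequality $|\langle e(\mathscr F),[S]\rangle|\le x([S])$ for taut foliations, combined with the fact that for the fibration one has equality $\langle e(\mathscr E),[G]\rangle = -\chi(G) = x([G])$ on the fibre class, so that $e(\mathscr F)$ and $e(\mathscr E)$ are forced to agree on the fibre class; the remaining classes are then handled by the boundary-matching argument above, after checking that $H_2(Y,\partial Y)/\mathrm{Tors}$ is generated by the fibre class together with classes supported near $\partial Y$ where the two Euler classes manifestly agree. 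The phrase ``modulo torsion'' in the statement is the signal that this last generation-and-comparison step need only be carried out rationally, which is what makes the boundary-versus-fibre decomposition sufficient.
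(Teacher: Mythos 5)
Your reduction to checking $\langle e(\mathscr F),h\rangle=\langle e(\mathscr E),h\rangle$ for every $h\in H_2(Y,\partial Y)$, and your identification of Thurston's inequality (sharp on the fibre class) as the engine, both match the paper. But the step where you actually close the argument for a general class has a genuine gap. You propose that $H_2(Y,\partial Y)/\mathrm{Tors}$ is generated by the fibre class together with ``classes supported near $\partial Y$ where the two Euler classes manifestly agree.'' That is false in general: already for $Y=G\times S^1$ with $G$ of positive genus, $H_2(Y,\partial Y)$ contains the classes $[\gamma\times S^1]$ for $\gamma$ a nonseparating curve or essential arc in $G$, and these are neither multiples of $[G]$ nor supported near the boundary --- indeed they are exactly the classes on which the proposition is applied later in the paper. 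Likewise, the interior index/tangency count you sketch has no a priori control for an arbitrary taut foliation, which is precisely why only the one-sided inequality $\chi(S)\le\langle e(\mathscr F),[S]\rangle$ of Thurston is available; your own ``hard part'' paragraph correctly diagnoses this but the proposed resolution does not supply the missing mechanism.

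The missing idea is a complementary-surface trick. Given $h$, the paper first replaces an efficient representative $\overline U$ by a ``staircase'' surface $\overline V$, built (via Gabai's Lemma~0.6 in \cite{G2}) from subsurfaces $V_1,\dots,V_n\subset G$ interpolated by product pieces, with $[\overline V]-[\overline U]$ a multiple of $[G]$; the complements $W_i=\overline{G\backslash V_i}$ assemble into a second surface $\overline W$ such that the oriented cut-and-paste of $\overline V$ and $\overline W$ is $n$ parallel copies of $G$. Then
$$\chi(\overline V)+\chi(\overline W)=n\chi(G)=\langle e(\mathscr F),[\overline V]+[\overline W]\rangle,$$
and Thurston's inequality applied to $\overline V$ and $\overline W$ separately forces equality in both, giving $\langle e(\mathscr F),[\overline V]\rangle=\chi(\overline V)$; running the same squeeze with $\mathscr E$ in place of $\mathscr F$ gives $\langle e(\mathscr E),[\overline V]\rangle=\chi(\overline V)$, hence agreement on $h$. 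Without some such device for converting the inequality into an equality on \emph{every} class, your argument only establishes agreement on the fibre class.
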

\begin{proof}
This result follows easily from the fact that the Floer homology
of a fibred manifold is ``monic''. Using this approach, one can
even prove that the two Euler classes are equal in $H^2(Y,\partial
Y)$. Here we will present a more geometric proof.

In order to prove the desired result, we only need to show that
\begin{equation}\label{eq:CheckEv}
\langle e(\mathscr F),h\rangle=\langle e(\mathscr E),h\rangle
\end{equation}
for any $h\in H_2(Y,\partial Y)$. When $h=[G]$, we have
\begin{equation}\label{eq:CheckG}
\langle e(\mathscr F),[G]\rangle=\langle e(\mathscr
E),[G]\rangle=\chi(G).
\end{equation}

In general, suppose $\overline U\subset Y$ is a proper surface
representing $h$ such that $\overline U\pitchfork G$. We can
choose the representative $\overline U$ such that $\overline U$ is
efficient in $Y$. Then $\overline U\cap G$ can also be made
efficient in $G$. Cutting $Y$ open along $G$, we get $G\times I$.
Let $U\subset G\times I$ be the proper surface obtained by cutting
$\overline U$ open along $C=\overline U\cap G$. Let
$C_0,C_1\subset G$ be proper oriented curves such that
$$-C_0\times 0=(\partial U)\cap (G\times 0),\quad C_1\times 1=(\partial U)\cap
(G\times 1).$$

Since $C_0$ and $C_1$ are homologous efficient curves in $G$
relative to $\partial G$, as in the proof of \cite[Lemma~0.6]{G2},
we can find compact subsurfaces $V_1,V_2,\dots,V_n$ and efficient
curves
$$C_0=\gamma_0,\gamma_1,\dots,\gamma_{n-1},\gamma_n=C_1$$ in $G$,
such that $$\overline{\partial V_i\backslash(\partial
G)}=\gamma_i\cup(-\gamma_{i-1}).$$ Let $W_i=\overline{G\backslash
V_i}$. Perturbing the surface
\begin{equation}\label{eq:ConstructV}
\bigcup_{i=1}^n\left((-\gamma_{i-1}\times[\frac
{i-1}n,\frac{i}n])\cup(V_i\times\frac in)\right)
\end{equation} slightly, we
get a proper surface $V\subset G\times I$, such that
$$(\partial V)\cap (G\times 0)=-C_0\times 0,\quad (\partial V)\cap
(G\times 1)=C_1\times 1.$$ Similarly, perturbing the surface
\begin{equation}\label{eq:ConstructW}
\bigcup_{i=1}^n\left((\gamma_{i-1}\times[\frac
{i-1}n,\frac{i}n])\cup(W_i\times\frac in)\right)
\end{equation} slightly, we
have a proper surface $W\subset G\times I$, such that
$$(\partial W)\cap (G\times 0)=C_0\times 0,\quad (\partial W)\cap
(G\times 1)=-C_1\times 1.$$

Let $\overline V\subset Y$ be the proper surface obtained from $V$
by identifying $C_0\times0$ and $C_1\times1$ with $C\subset
G\subset Y$. Similarly, define $\overline W\subset Y$. Note that
$$[\overline V]-[\overline U]=[V\cup(-U)]\in H_2(Y,\partial Y).$$
Perturbing $V\cup(-U)$ slightly, we get a properly immersed
surface in $Y$ which is disjoint from the fibre $G$. So
$[V\cup(-U)]=m[G]$ for some integer $m$. Using (\ref{eq:CheckG}),
in order to check (\ref{eq:CheckEv}) for $h=[\overline U]$, we only need to
check it for $h=[\overline V]$.

Since $\mathscr F$ is taut, by Thurston \cite[Corollary~1]{T} we
have
\begin{eqnarray*}
\chi(\overline V)&\le& \langle e(\mathscr F),[\overline V]\rangle,\\
\chi(\overline W)&\le& \langle e(\mathscr F),[\overline W]\rangle.
\end{eqnarray*}
Adding the two inequalities together, we get
\begin{equation}\label{eq:ChiIneq}
\chi(\overline V)+\chi(\overline W)\le\langle e(\mathscr
F),[\overline V]+[\overline W]\rangle.
\end{equation}
By the constructions (\ref{eq:ConstructV}), (\ref{eq:ConstructW}),
the result of doing oriented cut-and-pastes to $\overline V$ and
$\overline W$ is $n$ copies of $G$. So the left hand side of
(\ref{eq:ChiIneq}) is $n\chi(G)$, while the right hand side is
$\langle e(\mathscr F),n[G]\rangle=n\chi(G)$. So the equality
holds. In particular, we should have
$$\chi(\overline V)=\langle e(\mathscr F),[\overline V]\rangle.$$
The same argument shows that
$$\chi(\overline V)=\langle e(\mathscr E),[\overline V]\rangle,$$
so (\ref{eq:CheckEv}) holds for $h=[\overline V]$. Since we have
checked (\ref{eq:CheckEv}) for all elements $h\in H_2(Y,\partial
Y)$, $e(\mathscr F)$ is equal to $e(\mathscr E)$ up to a torsion
element in $H^2(Y,\partial Y)$.
\end{proof}

\subsection{A characterization of one-crossing knot projections}

In this subsection, we will give a characterization of
one-crossing knot projections in terms of double primitive knots. This
fact is not used in the current paper, but it is useful to bare it
in mind.

\begin{defn}
Let $F'\subset F\times I$ be a connected surface of genus
$g(F)+1$, and $\partial F'=(\partial F)\times\frac12$. Suppose
$F'$ is a Heegaard surface. Namely, $F'$ splits $F\times I$ into
two parts $U_0$ and $U_1$, such that $U_0$ is homeomorphic to
$(F\times[0,\frac12])\cup H_1$, and $U_2$ is homeomorphic to
$(F\times[\frac12,1])\cup H_2$, where $H_1$ is a one-handle with
feet on $F\times \frac12$ and $H_2$ is a one-handle with feet on
$-F\times \frac12$. A knot $K\subset F\times I$ is a {\it double
primitive} knot if it is isotopic to a curve on $F'$ which goes
through each of $H_1,H_2$ exactly once.
\end{defn}

\begin{lem}
A knot $K\subset F\times I$ is double primitive if and only if it
has a projection which has only one crossing.
\end{lem}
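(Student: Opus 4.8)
The plan is to prove both implications by identifying a one-crossing projection of $K$ with a genus $(g(F)+1)$ Heegaard surface carrying $K$; the dictionary between the two is precisely the tube construction sketched after Theorem~\ref{thm:SurgProd}.

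\emph{(One crossing $\Rightarrow$ double primitive.)} I would first isotope $K$ into blackboard position, so that $p(K)$ has a single double point $x_0$ and $K$ lies in $F\times\frac12$ except over a small disk $D\ni x_0$, where the overstrand sits slightly above height $\frac12$ and the understrand slightly below. I then build $F'$ from $F\times\frac12$ by attaching a single tube over $D$ as in the local picture of Figure~\ref{fig:onehandle}, so that the upward-bulging sheet carries the overstrand and the downward-dipping sheet carries the understrand. It is immediate that $F'$ is connected of genus $g(F)+1$ with $\partial F'=(\partial F)\times\frac12$, and that $K\subset F'$. The remaining points are bookkeeping: the tube, seen from below, is a one-handle $H_1$ attached to $F\times[0,\frac12]$ along $F\times\frac12$, and, seen from above, a one-handle $H_2$ attached to $F\times[\frac12,1]$ along $-F\times\frac12$, so $F'$ splits $F\times I$ into $U_0,U_1$ of the required form; and since the overstrand and the understrand each run over the tube exactly once, $K$ passes through each of $H_1,H_2$ exactly once. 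Hence $K$ is double primitive.

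\emph{(Double primitive $\Rightarrow$ one crossing.)} Here $K$ lies on a Heegaard surface $F'$ with cocore disks $D_1\subset U_0$ of $H_1$ and $D_2\subset U_1$ of $H_2$, and $K$ runs over each handle once, i.e.\ meets the belt circle $\partial D_i$ of each $H_i$ exactly once. My strategy is to normalize $F'$: isotope it, rel $(\partial F)\times\frac12$, so that the two handles become a single standard vertical tube localized over a small disk $D\ni x_0\in F$, with $F'$ coinciding with $F\times\frac12$ outside $D\times I$. In this position $\partial D_1$ and $\partial D_2$ are the two standard stabilizing curves of the tube and meet once. I would then project by $p$. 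Away from $D$ the restriction $p|_{F'}$ is an embedding, so $p(K)$ has no crossings there; over $D$ the two sheets of the tube project onto overlapping regions of $F$, and because $K$ traverses the tube once on each sheet, $p(K)$ acquires exactly one crossing. Thus $p(K)$ has precisely one crossing.

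The main obstacle is the normalization step in the converse: one must show that the genus-$(g(F)+1)$ splitting $F'$ together with its handles can be isotoped, rel $(\partial F)\times\frac12$, to the standard localized tube, equivalently that $H_1,H_2$ are unknotted and unlinked and that $F'$ is the standard stabilization of $F\times\frac12$. I expect to obtain this from a standardization result for Heegaard splittings of a surface times an interval (in the spirit of Scharlemann--Thompson), after which the projection argument is routine. The forward direction, by contrast, is entirely explicit and requires only verifying that the constructed tube realizes $H_1$ and $H_2$ correctly.
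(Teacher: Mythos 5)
Your overall route coincides with the paper's: the forward implication is the explicit tube construction already given in the introduction, and the converse rests on standardizing the Heegaard splitting of $F\times I$ in the spirit of Scharlemann--Thompson. The forward half is complete and correct.

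The converse, however, has a genuine gap exactly where you flag it, and the normalization you ask for is stronger than what Scharlemann--Thompson delivers. Their theorem standardizes the splitting \emph{surface} $F'$, equivalently produces a stabilizing pair of disks $D_0\subset U_0$, $D_1\subset U_1$ with $|\partial D_0\cap\partial D_1|=1$; it says nothing a priori about the particular handles $H_1,H_2$ that $K$ runs over, so ``isotope $F'$ together with its handles to the standard localized tube'' requires an additional argument identifying the given cocores with the stabilizing disks (or a uniqueness statement for the one-handle structure on the compression bodies $U_0,U_1$). The paper sidesteps this by working only with the stabilizing pair: since $g(F')=g(F)+1$, compressing $F'$ along $D_0$ yields a surface homeomorphic, hence parallel, to $F\times 0$, so $F'$ is $F\times\frac12$ with a single one-handle attached and $\partial D_1$ runs over that handle exactly once; the unique crossing of $p(K)$ is then read off near $D_1$, with no crossings elsewhere. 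You also do not address the case $\partial F\neq\emptyset$, where the standardization theorem does not apply directly; the paper caps each boundary component of $F$ with a once-punctured torus to reduce to the closed case, then isotopes the stabilizing disks off the added pieces. Without these two ingredients the normalization step, and hence the converse, is not yet a proof.
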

\begin{proof}
If a knot has a one-crossing projection, then it is double
primitive as shown in the introduction. Now assume $K$ is double
primitive, then $K$ is embedded into a Heegaard surface $F'$ as in
the above definition.

We claim that $F'$ is stabilized. Namely, there is a compressing
disk $D_0\subset U_0$ and a compressing disk $D_1\subset U_1$ such
that $|(\partial D_0)\cap(\partial D_1)|=1$. When $F$ is closed,
this follows from the theorem of Scharlemann and Thompson
\cite{SchTh1} that the Heegaard splittings of $F\times I$ are
standard. When $F$ is not closed, let $R$ be the torus with one
hole, we can glue a copy of $R$ to each component of $\partial F$,
then $F$ becomes a closed surface $G$ and $F'$ becomes a Heegaard
surface $G'$ in $G\times I$. Using Scharlemann and Thompson's
theorem, $G'$ is stabilized, hence there are compressing disks
$D_0$ and $D_1$ in the two compression bodies separated by $G'$,
such that $|(\partial D_0)\cap(\partial D_1)|=1$. Using standard
arguments we can isotope $D_0$ and $D_1$ to be disjoint from the
copies of $R\times I$, so $D_0\subset U_0$, $D_1\subset U_1$, thus
our conclusion follows.

Since $g(F')=g(F)+1$, after compressing $F'$ along $D_0$ we get a
surface homeomorphic to $F$ (and hence parallel to $F\times0$ in
$F\times I$). So $F'$ is obtained from $F\times\frac12$ by adding
a one-handle, and $D_1$ is a disk whose boundary goes through the
one-handle exactly once. Now the local picture of $F'$ looks
exactly like in Figure~\ref{fig:onehandle}. The knot $K$ goes
through the one-handle once and intersects $\partial D_1$ once, so
there is a crossing near $D_1$ and no crossing elsewhere.
\end{proof}

\section{Warm-up cases}\label{sect:WarmUp}

In this section, we are going to prove some easy cases of our main
theorem. When $F$ is a disk or sphere, our result follows from
Gordon and Luecke's Knot Complement Theorem \cite{GL}. When $F$ is
an annulus, we have the following lemma.

\begin{lem}\label{lem:FAnnulus}
Theorem~\ref{thm:SurgProd} is true when $F$ is an
annulus.
\end{lem}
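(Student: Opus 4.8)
The plan is to exploit the fact that when $F$ is an annulus, $V=F\times I$ is a solid torus, while the sutures $(\partial F)\times I$ consist of two annuli $A_1,A_2$ lying longitudinally on the torus $\partial V$ (the core of each $A_i$ is isotopic in $\partial V$ to the core of $V$). Observe first that Theorem~\ref{thm:NormRed} gives no information here: since $\chi(F)=0$, the class $[F\times\{0\}]$ has Thurston norm $0$ and is automatically norm minimizing, so a direct argument is required. The hypothesis that $(N(\alpha),(\partial F)\times I)$ is homeomorphic to $(F\times I,(\partial F)\times I)$ implies in particular that $N(\alpha)$ is again a solid torus. I would therefore invoke the classification of knots in a solid torus admitting a nontrivial surgery yielding a solid torus (Gabai's solid--torus surgery theorem, together with Berge's refinement): up to isotopy $K$ either (i) lies in a $3$--ball in $V$, (ii) is a core of $V$, or (iii) is a $1$--bridge braid in $V$, in which last case the surgery slope is integral.

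In case (i) the knot lies in a ball, and since $\alpha$--surgery must turn that ball into a ball, Gordon and Luecke's theorem (already used above for the disk and sphere) forces $K$ to be the unknot there and $\alpha=\frac1n$; such a $K$ has a $0$--crossing projection with $\alpha=\frac1n$ relative to $\lambda_b$. In case (ii) the knot is isotopic to the core, hence to $(\text{core of }F)\times\{\tfrac12\}$, which projects embedded onto the core of the annulus and so has a $0$--crossing projection with blackboard framing $\lambda_b$. Here the complement of $K$ is $T^2\times I$, which is refilled to a solid torus by \emph{every} slope, so the extra input must come from the hypothesis that the \emph{pair} is preserved: a homeomorphism of $(V,A_1\cup A_2)$ carries the longitudinal class of $\partial V$ to itself, so the meridian of $N(\alpha)$ is forced to meet the core of each $A_i$ exactly once, which pins down $\alpha=\frac1n$ with respect to $\lambda_b$. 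This is consistent with Proposition~\ref{prop:MapEle}, where $\varphi_\alpha=\tau^n$ for the Dehn twist $\tau$ along the core.

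In case (iii) the knot is a $1$--bridge braid and the surgery is integral, which I would identify with the blackboard framing $\alpha=\lambda_b$; it remains to show that such a $K$ has a minimal projection with exactly one crossing. The natural route is the double--primitive characterization of Subsection~2.3, since a $1$--bridge braid meeting $V$ so that the pair is preserved should be precisely a double--primitive, equivalently one--crossing, knot. The main obstacle, and where the real work lies, is exactly this matching: one must use the pair condition itself, not merely that $N(\alpha)$ is a solid torus, both to rule out $1$--bridge braids of higher winding number (which can yield a solid torus but not the correct boundary pattern) and to cut the surgery slope down to the single admissible value $\lambda_b$, thereby forcing $K$ into the winding--number--two, one--crossing form. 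Once the three cases are reconciled with the statement, the assertions on $\alpha$ follow and the lemma is proved.
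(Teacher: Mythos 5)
Your overall strategy---reduce to Gabai's classification of knots in a solid torus admitting nontrivial solid-torus surgeries, then use the condition that the \emph{pair} $(N(\alpha),(\partial F)\times I)$ is preserved to constrain the braid and the slope---is the same as the paper's. But your trichotomy is wrong, and the omission lands exactly on the case that produces the one-crossing knots. Gabai's theorem gives: $K$ contained in a ball, or $K$ a $0$--bridge braid (a curve on $\partial V$, i.e.\ a $(p,q)$--cable pattern, of which the core is only the $p=1$ instance), or $K$ a $1$--bridge braid. You have dropped the $0$--bridge braids of winding number $p\ge2$ and instead assigned the one-crossing conclusion to the $1$--bridge braids. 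The paper establishes the opposite: the $1$--bridge braid case is \emph{impossible} (by \cite[Lemma~3.2]{G1bridge} the surgery slope is $\lambda-(t\omega+d)\mu$ with $d\in\{b,b+1\}$, forcing $t\omega+d=\pm1$, which the parameter restrictions $1\le b\le\omega-2$, $t\equiv r\pmod\omega$, $1\le r\le\omega-2$ rule out), while the one-crossing knots of Theorem~\ref{thm:SurgProd} arise precisely as the $(2,\pm1)$--cables, i.e.\ from the $0$--bridge case you omitted.

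Moreover, the step you flag as ``where the real work lies'' is the entire content of the lemma, and you do not supply it. The paper carries it out by a direct homological computation: for a $0$--bridge braid $K\simeq p\mathcal L+q\mathcal M$ on $\partial V$, the frame specified by $\partial V$ is $\Lambda=pq\mu+\lambda$, a solid-torus surgery forces $\alpha=\mu+n\Lambda$, and the meridian of the resulting solid torus is $(1+pqn)\mathcal M+p^2n\mathcal L$; the pair condition then demands $\Delta\bigl((1+pqn)\mathcal M+p^2n\mathcal L,\mathcal L\bigr)=1$, i.e.\ $1+pqn=\pm1$, which pins down $(p,q,n)=(2,1,-1)$ or $(2,-1,1)$ and identifies $\alpha$ with the blackboard frame. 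Without a computation of this kind your argument neither rules out higher-winding-number cables (which can admit solid-torus surgeries but produce the wrong meridian on $\partial V$) nor shows that the surviving knot has a one-crossing projection. Finally, the double-primitive characterization of Subsection~2.3 is not available as a shortcut here: the paper states explicitly that it is not used in the proofs, and invoking it would still require you to verify the $(2,\pm1)$--cable is the only surviving pattern.
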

\begin{proof}
Let $\mathcal M$ be the meridian of the solid torus $V=F\times I$,
and let $\mathcal L$ be the frame of $V$ specified by $\partial
F$. By Gabai \cite{GSurg}, if $K$ is nontrivial, then $K$ is a
$0$-- or $1$--bridge braid in $F\times I$.

Capping off one boundary component of $F$ with a disk, we get a
disk $D$.  Let $\lambda$ be the Seifert frame of $K$ in $D\times
I$ and let $\mu$ be the meridian of $K$.

If $K$ is the core of $V$, then the surgery preserves the
homeomorphism type of $(F\times I,(\partial F)\times I)$ if and
only if the slope is $\mu+n\lambda$ for some integer $n$.

From now on we assume the braid index of $K$ is greater than $1$.

If $K$ is a $0$--bridge braid, then $K$ is isotopic to $p\mathcal
L+q\mathcal M$ on $\partial V$ for some $p,q\in\mathbb Z$. Let
$\Lambda$ be the frame on $K$ specified by $\partial V$, then
$\Lambda=pq\mu+\lambda$. A surgery on $K$ yields a solid torus if
and only if the slope $\alpha$ of the surgery satisfies that
$\Delta(\alpha,\Lambda)=1$, namely, when the slope $\alpha$ is
$\mu+n\Lambda$ for some integer $n$. Now $p\alpha=p\mu+pn\Lambda$
is homologous to $\mathcal M+pn(p\mathcal L+q\mathcal M)$ in
$V\backslash K$, so the meridian of the new ambient solid torus
after surgery is $(1+pqn)\mathcal M+p^2n\mathcal L$. Since the
surgery preserves the homeomorphism type of the pair $(F\times
I,(\partial F)\times I)$, we must have $\Delta((1+pqn)\mathcal
M+p^2n\mathcal L,\mathcal L)=1$, thus $1+pqn=\pm1$. Since
$p>1,n\ne0$, we have $(p,q,n)=(2,1,-1)$ or $(2,-1,1)$. When
$(p,q)=(2,1)$, the slope $\alpha$ on $K$ is
$$\mu+n(pq\mu+\lambda)=(1+pqn)\mu+n\lambda,$$
which is $1$ with respect to the frame $\lambda$, and the meridian
of the new ambient solid torus is $\mathcal M+4\mathcal L$; when
$(p,q)=(2,-1)$, the slope $\alpha$ on $K$ is $-1$, and the
meridian of the new ambient solid torus is $\mathcal M-4\mathcal
L$. We can check $\alpha$ is the blackboard frame.

If $K$ is a $1$--bridge braid, then $K$ is determined by $3$
parameters $\omega,b,t$ by Gabai \cite{G1bridge}. Here $\omega>0$
is the braid index, $1\le b\le\omega-2$, $t\equiv r \pmod{\omega}$
for some integer $r$ with $1\le r\le\omega-2$. Since the
$\alpha$--surgery yields a solid torus, by
\cite[Lemma~3.2]{G1bridge} the slope of the surgery is
$\lambda-(t\omega+d)\mu$, where $d\in\{b,b+1\}$. So
$t\omega+d=\pm1$, which is impossible for any $\omega,b,t$
satisfying the previous restrictions.
\end{proof}

\begin{lem}\label{lem:DehnTwist}
In the above lemma, let
$\varphi_{\alpha}\in\mathcal{MCG}(F,\partial F)$ be the map
induced by the $\alpha$--surgery. If $K$ is the core of $F\times
I$ and $\alpha=\frac1n$, then $\varphi_{\alpha}=\tau^n$, where
$\tau$ is the right hand Dehn twist in $F$; if $K$ is the
$(2,\pm1)$--cable in $F\times I$, then
$\varphi_{\alpha}=\tau^{\pm4}$.
\end{lem}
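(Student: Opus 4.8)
The plan is to exploit that $\mathcal{MCG}(F,\partial F)\cong\mathbb Z$ when $F$ is an annulus, generated by the right-hand Dehn twist $\tau$ along the core $c$. Hence $\varphi_{\alpha}=\tau^{k}$ for some integer $k$, and the whole problem reduces to computing the single integer $k$ in each of the two cases. I will extract $k$ from the position of the meridian of the surgered solid torus, which was already determined in the proof of Lemma~\ref{lem:FAnnulus}.

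The key step is a comparison of the two product structures. Both $\mathcal P$ on $V=F\times I$ and $\mathcal P_{\alpha}$ on $N(\alpha)$ present a solid torus as annulus$\times I$ and agree on $(\partial F)\times I$; by the Alexander-trick uniqueness invoked in the definition of $\varphi_{\alpha}$, each is determined up to isotopy rel boundary, and the discrepancy between them is measured by how their meridian disks wind around the core. Write $\mathcal M,\mathcal L$ for the meridian and the surface longitude (the $\partial F$--frame) of $V$; then $\mathcal P$ has meridian $\mathcal M$, while $N(\alpha)$ is a solid torus (Lemma~\ref{lem:FAnnulus}) whose meridian has the form $\mathcal M+k\mathcal L$ after a choice of orientation (the coefficient of $\mathcal M$ is forced to be $\pm1$ so that the structure on $(\partial F)\times I$ can extend). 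I claim $\varphi_{\alpha}=\tau^{k}$: in passing from $\mathcal P$ to $\mathcal P_{\alpha}$ the $I$--fibers carrying $F\times 0$ to $F\times 1$ must spiral $k$ extra times around the core relative to the $\mathcal P$--fibers, and this discrepancy is exactly the $k$--fold twist $\tau^{k}$. This reduces the lemma to reading off $k$.

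It then remains to plug in the two meridian computations. For the core, $K$ lies in the middle fiber $F\times\{\frac12\}$ and its blackboard frame satisfies $\lambda_{b}=\mathcal L$; the slope $\frac1n$ is $\mathcal M+n\mathcal L$, which becomes the meridian of $N(\alpha)$, so $k=n$ and $\varphi_{\alpha}=\tau^{n}$. For the $(2,\pm1)$--cable, the proof of Lemma~\ref{lem:FAnnulus} computed the meridian of the new solid torus to be $\mathcal M\pm 4\mathcal L$, so $k=\pm 4$ and $\varphi_{\alpha}=\tau^{\pm 4}$. As a consistency check one may instead use Lemma~\ref{lem:Monodromy}: $\varphi_{\alpha}$ is the monodromy of the annulus bundle $Y(\alpha)$, and for the core the standard fact that $\frac1n$--surgery on a curve lying in a fiber composes the monodromy (here the identity) with the $n$--th power of the Dehn twist along that curve gives $\varphi_{\alpha}=\tau^{n}$ again.

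The main obstacle is pinning down the signs and orientations so that the exponents come out as $+n$ and $\pm4$ rather than their negatives: one must fix compatible orientations of $\mathcal L$, of the core $c$, and of the right-hand twist $\tau$, and check that a meridian of the form $\mathcal M+k\mathcal L$ produces $+k$ (not $-k$) powers of $\tau$ in the comparison of fiberings. Verifying the claim in the second paragraph, that the discrepancy between two product structures whose meridians differ by $k\mathcal L$ is precisely $\tau^{k}$, is the technical heart; everything else is bookkeeping layered on top of Lemma~\ref{lem:FAnnulus}.
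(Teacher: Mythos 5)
Your proposal is correct and follows essentially the same route as the paper: the core case is the standard fact that a meridian shift from $\mathcal M$ to $\mathcal M+k\mathcal L$ corresponds to $\varphi_{\alpha}=\tau^{k}$ (which the paper simply calls well-known and you justify by comparing the two product structures), and the cable case is then read off from the meridian computation $\mathcal M\pm4\mathcal L$ already done in the proof of Lemma~\ref{lem:FAnnulus}. Your extra care with $\mathcal{MCG}(F,\partial F)\cong\mathbb Z$ and with orientations only fills in details the paper leaves implicit.
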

\begin{proof}
When $K$ is the core of $F\times I$, the conclusion is well-known.
When $K$ is the $(2,\pm1)$--cable, then from the proof of the
previous lemma we know that the meridian of the new ambient solid
torus is $\mathcal M\pm4\mathcal L$, hence the conclusion follows
from the first case.
\end{proof}

The following lemma is obvious.

\begin{lem}\label{lem:ProdDecom}
Suppose $(C\times I)\subset (F\times I)$ is a product disk or
product annulus, $(C\times I)\cap K=\emptyset$. Let $F_1$ be the
surface obtained from $F$ by cutting $F$ open along $C$, let $N_1$
be the manifold obtained from $N=(F\times
I)\backslash\mathrm{int}(\mathrm{Nd}(K))$ by cutting $N$ open
along $C\times I$. Then the pair $(N(\alpha),(\partial F)\times
I)$ is homeomorphic to $(F\times I,(\partial F)\times I)$ if and
only if the pair $(N_1(\alpha),(\partial F_1)\times I)$ is
homeomorphic to $(F_1\times I,(\partial F_1)\times I)$.\qed
\end{lem}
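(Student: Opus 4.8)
The plan is to observe that cutting along the vertical product surface $C\times I$ carries the surgery picture over $F$ to the one over $F_1$, and that cutting and regluing along $C\times I$ are mutually inverse operations. First, two identifications are immediate. Since $C\times I$ is vertical, cutting $F\times I$ along it gives $(F\text{ cut along }C)\times I=F_1\times I$, the two new copies of $C\times I$ becoming the sutures attached to the new boundary of $F_1$; thus $(F_1\times I,(\partial F_1)\times I)$ is literally the result of the cut. Because $(C\times I)\cap K=\emptyset$, the surface $C\times I$ is disjoint from $\mathrm{Nd}(K)$ and from the surgery solid torus, so it survives in $N(\alpha)$ as a product disk or annulus, and cutting $N(\alpha)$ along it (cut $N$, then reglue the surgery torus) gives exactly $N_1(\alpha)$. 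The sutured structure on $N(\alpha)$ agrees with that on $F\times I$ away from $K$, so in both cases we cut along the same product disk or annulus.

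For the forward implication, let $\Phi\co(N(\alpha),(\partial F)\times I)\to(F\times I,(\partial F)\times I)$ be a homeomorphism of pairs. Being a pair-homeomorphism, $\Phi$ sends the product disk/annulus $C\times I$ to a product disk/annulus $\Phi(C\times I)$ in $F\times I$, which by the standard verticalization of such surfaces is isotopic to $\phi(C)\times I$, where $\phi$ is the self-homeomorphism of $F$ induced by $\Phi$ on $R_+$ and $\phi(C)$ is the trace of $\Phi(C\times I)$ on $R_+$. Cutting $F\times I$ along this vertical surface gives $(F\text{ cut along }\phi(C))\times I$, and since $\phi$ carries $C$ to $\phi(C)$ it identifies this with $F_1\times I$. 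As $\Phi$ carries $N(\alpha)$ cut along $C\times I$ to $F\times I$ cut along $\Phi(C\times I)$, we obtain $(N_1(\alpha),(\partial F_1)\times I)\cong(F_1\times I,(\partial F_1)\times I)$.

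For the converse I would run the same argument backwards, regluing instead of cutting. Let $\Psi\co(N_1(\alpha),(\partial F_1)\times I)\to(F_1\times I,(\partial F_1)\times I)$ be a homeomorphism of pairs, and let $A^+,A^-$ be the two copies of $C\times I$ produced by the cut, so that $N(\alpha)$ is recovered by gluing $A^+$ to $A^-$ by the tautological cut-reversing map $g$. Writing $\psi$ for the self-homeomorphism of $F_1$ induced by $\Psi$ on $R_+$, the annuli $\Psi(A^\pm)$ meet $R_+$ in $\psi(C^\pm)$, where $C^\pm\subset\partial F_1$ are the two boundary pieces coming from $C$. Transporting $g$ by $\Psi$ realizes $N(\alpha)$ as $F_1\times I$ with $\psi(C^+)\times I$ glued to $\psi(C^-)\times I$; up to the normalization discussed below this is a product sutured manifold over $F_1/(\psi(C^+)\sim\psi(C^-))$, which $\psi$ identifies with $F_1/(C^+\sim C^-)=F$. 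The leftover sutures are $(\partial F)\times I$, giving $(N(\alpha),(\partial F)\times I)\cong(F\times I,(\partial F)\times I)$.

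The only point needing care is this last step: the gluing map transported by $\Psi$ need not be the product identity $\psi(C^+)\times I\to\psi(C^-)\times I$, but may differ from it by a Dehn twist of the annulus in the $I$-direction. I expect this twist-absorption to be the main (indeed the only nonformal) ingredient. Such a twist is absorbed by a vertical Dehn twist of $F_1\times I$ supported in a collar of $\psi(C^+)\times I$ — for instance $(\theta,s,t)\mapsto(\theta+2\pi t\,\rho(s),s,t)$, where $s$ is a collar coordinate with $\rho(0)=1$ and $\rho\equiv0$ outside the collar — which fixes $R_-$, preserves $R_+$ and every suture setwise, and hence is a self-homeomorphism of the pair $(F_1\times I,(\partial F_1)\times I)$. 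Consequently the twisted and untwisted regluings yield homeomorphic pairs, and the untwisted one is $(F\times I,(\partial F)\times I)$ by construction. This, together with the standard verticalization of product disks and annuli used in both directions, is all that lies beyond the tautological reversibility of cutting along $C\times I$.
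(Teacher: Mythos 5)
Your argument is correct, and there is nothing in the paper to compare it against in detail: Lemma~\ref{lem:ProdDecom} is introduced there with the sentence ``The following lemma is obvious'' and is stated without proof. Your write-up supplies exactly the details that are being suppressed, and you have correctly isolated the two nonformal points: the verticalization of the image product disk/annulus in the forward direction, and the absorption of the regluing twist in the converse, for which your explicit model $(\theta,s,t)\mapsto(\theta+2\pi t\,\rho(s),s,t)$ does what you claim (note that at $t=1$, $s=0$ it is the identity, so it genuinely fixes the suture circle and descends to the quotient surface). One hypothesis you use silently should be recorded: the ``standard verticalization'' of $\Phi(C\times I)$ requires that surface to be incompressible (and boundary-incompressible) in $F\times I$, equivalently that $C\times I$ remain incompressible in $N(\alpha)$. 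This holds whenever $C$ is essential in $F$: the surgery leaves $R_\pm=F\times\{0\}$, $F\times\{1\}$ untouched, so $\partial(C\times I)$ consists of essential curves there, and a compression would make $C\times 1$ null-homotopic in $N(\alpha)\cong F\times I$ and hence in $F\times 1$, a contradiction; similarly an essential arc $C$ gives a boundary-incompressible product disk. (If $C$ is inessential, $F_1$ is disconnected and the statement degenerates to an irreducibility argument; every application of the lemma in the paper has $C$ essential.) With that caveat made explicit, your proof is complete and is surely the argument the author had in mind.
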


\begin{lem}
Theorem~\ref{thm:SurgProd} is true when $F$ is a torus.
\end{lem}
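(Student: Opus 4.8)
The plan is to exploit the fact that, since $F=T^2$ is closed, the boundary data $(\partial F)\times I$ is empty and the hypothesis of Theorem~\ref{thm:SurgProd} reduces to $N(\alpha)\cong T^2\times I$. The strategy is then to cut $F\times I$ along a vertical annulus and invoke the already-established annulus case, Lemma~\ref{lem:FAnnulus}.

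Concretely, suppose first that after an isotopy $K$ is disjoint from some essential vertical annulus $A=C\times I$, where $C\subset T^2$ is an essential simple closed curve (equivalently $p(K)\cap C=\emptyset$). Since $A$ is a product annulus disjoint from $K$, Lemma~\ref{lem:ProdDecom} lets me cut along it: the surface $F_1$ obtained from $T^2$ by cutting along $C$ is an annulus, and $(N(\alpha),\emptyset)$ is homeomorphic to $(T^2\times I,\emptyset)$ if and only if $(N_1(\alpha),(\partial F_1)\times I)$ is homeomorphic to $(F_1\times I,(\partial F_1)\times I)$. Applying Lemma~\ref{lem:FAnnulus} to $F_1$ then shows $K$ has a projection with at most one crossing and pins down $\alpha$ (the core/$0$--crossing case yields $\alpha=\frac1n$, the $(2,\pm1)$--cable/$1$--crossing case yields $\alpha=\lambda_b=\pm1$). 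Because $A$ is disjoint from $K$, cutting along it alters neither the local crossing picture of $p(K)$ nor the framing, so the conclusion of Theorem~\ref{thm:SurgProd} transfers back verbatim to $F=T^2$.

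The crux is therefore to produce such a disjoint vertical annulus, i.e. to rule out the case in which $K$ meets \emph{every} vertical annulus (a ``filling'' knot). Here I would pass to the surgery dual $K^*\subset N(\alpha)\cong T^2\times I$ and use that $T^2\times I$ contains the essential horizontal torus $T^2\times\{\tfrac12\}$ and essential vertical annuli $C\times I$ for every essential $C$. Isotoping one such annulus (or the horizontal torus) to minimize its intersection with $K^*$, and using irreducibility of $T^2\times I$ together with Gabai's filling theorem (Theorem~\ref{thm:Gabai}) to keep the relevant sutured decomposition taut, I would try to remove all intersections with $K^*$; an annulus disjoint from $K^*$ descends to a vertical annulus disjoint from $K$ in the original product, completing the reduction. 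When the intersections cannot be removed, I would instead analyze the monodromy directly: by Lemma~\ref{lem:Monodromy} the closed manifold $Y(\alpha)$ obtained from $F\times S^1$ is the mapping torus of $\varphi_\alpha\in\mathcal{MCG}(T^2)=SL(2,\mathbb Z)$, and the product foliation of $N(\alpha)=T^2\times I$ by level tori exhibits the fibre $G=T^2$ as a leaf of a taut foliation of $Y(\alpha)$; comparing $e(\mathscr F)$ with the fibration Euler class through Proposition~\ref{prop:EuEqual} constrains $\varphi_\alpha$, which I would then match against the list in Proposition~\ref{prop:MapEle}.

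I expect this last, filling, configuration to be the main obstacle. The difficulty is twofold. First, $\chi(T^2)=0$ makes the Thurston norm of the horizontal class vanish, so the norm inequalities that drive Theorem~\ref{thm:NormRed} and the later pair-of-pants argument degenerate and cannot be applied off the shelf. Second, controlling the \emph{geometric} intersection of an essential annulus with the dual knot $K^*$ (rather than merely the algebraic intersection, which homology forces to be adjustable) requires the full strength of the sutured-manifold and Euler-class machinery; this is precisely the phenomenon that the subsequent analysis of the monodromy for a pair of pants is designed to handle.
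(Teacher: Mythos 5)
Your first paragraph is exactly the paper's endgame: once you have a vertical annulus disjoint from $K$, cut along it and quote Lemma~\ref{lem:ProdDecom} and Lemma~\ref{lem:FAnnulus}. But the crux you correctly isolate --- producing that annulus --- is where the proposal has a genuine gap. Your plan is to start with a \emph{fixed} vertical annulus $C\times I$ (or the horizontal torus) and isotope it to reduce its geometric intersection with the dual knot $K^*$; there is no mechanism for this to succeed, since a knot can meet every vertical annulus essentially even when the algebraic intersection number vanishes, and you concede as much. The fallback via the monodromy $\varphi_\alpha\in SL(2,\mathbb Z)$ is not carried out either. So the filling case, which you flag as ``the main obstacle,'' is left open.

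The missing idea is to reverse the direction of the argument. Choose $C\subset T^2$ so that $[C\times I]\cdot[K]=0$ (possible because $[K]\in H_1(T^2)$ is a multiple of a primitive class), so that $[C\times I]$ lifts to a class in $H_2((T^2\times I)\setminus K,\partial(T^2\times I))$, and take a \emph{taut representative} $S$ of this class in the knot exterior --- do not start from the annulus $C\times I$ itself. Theorem~\ref{thm:Gabai} then says $S$ stays taut after at least one of the two fillings $\infty$ and $\alpha$, and \emph{both} fillings give $T^2\times I$, where a taut surface in the class $[C\times I]$ is forced to be a vertical annulus (the class has Thurston norm $0$, and essential annuli in $T^2\times I$ spanning the two boundary tori are vertical). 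Hence $S$ itself is a product annulus in the exterior, i.e.\ the disjoint vertical annulus you need, and your paragraph one finishes the proof. Note also that your worry about $\chi(T^2)=0$ degenerating the norm arguments is misplaced here: the vanishing of the norm is precisely what pins $S$ down as an annulus rather than an obstacle to the method.
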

\begin{proof}
Let $C\subset F$ be a simple closed curve such that $K$ is
homologous to a multiple of $C$. Consider the homology class
$[C\times I]\in H_2(F\times I,\partial(F\times I))$, then
$[C\times I]\cdot[K]=0$. It follows that $[C\times I]$ is also a
homology class in $H_2((F\times I)\backslash K,\partial(F\times
I))$.

Let $(S,\partial S)\subset ((F\times I)\backslash K,\partial(F\times
I))$ be a taut surface representing $[C\times I]$. By
Theorem~\ref{thm:Gabai}, $S$ remains taut in at least one of the original
$F\times I$ and $N(\alpha)\cong F\times I$. Hence $S$ must be a product annulus. Cutting $F\times
I$ open along $S$, $K$ becomes a knot in $(\text{annulus}\times
I)$. Now we can apply Lemma~\ref{lem:FAnnulus} and Lemma~\ref{lem:ProdDecom} to get our
conclusion.
\end{proof}

\begin{lem}
If the conclusion of Theorem~\ref{thm:SurgProd} holds for all
knots whose exterior are $\partial(\mathrm{Nd}(K))$--atoroidal,
then the conclusion holds for all knots in $F\times I$.
\end{lem}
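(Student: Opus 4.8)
The plan is to induct on the number of tori in the JSJ decomposition of the exterior $N=(F\times I)\setminus\mathrm{int}(\mathrm{Nd}(K))$, with the $\partial(\mathrm{Nd}(K))$--atoroidal case as the base case, which holds by hypothesis. We may assume $F$ is none of a disk, sphere, annulus, or torus, as those are already settled; then $\pi_1(F)$ has no $\mathbb Z\oplus\mathbb Z$, so $F\times I$ contains no incompressible torus. Suppose $N$ is not $T$--atoroidal, where $T=\partial(\mathrm{Nd}(K))$, and pick a torus $T'\subset N$ that is I--cobordant to $T$ but not parallel to it. Since $T'$ is compressible in $F\times I$ but incompressible in $N$, any compressing disk must meet $K$; by irreducibility of $F\times I$, $T'$ bounds a solid torus $W$ with $K\subset\mathrm{int}(W)$, and $K$ is geometrically essential in $W$ (otherwise $T'$ compresses in $N$, or $K$ lies in a ball and is unknotted by the already--treated disk case). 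Thus $W$ is a companion solid torus and $K$ is a satellite with companion core $K_1=\mathrm{core}(W)$.

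Next I would pin down the pattern. The surgery is supported inside $W$, and $N(\alpha)\cong F\times I$ is irreducible with no incompressible torus, so $T'$ must compress on its inside in $N(\alpha)$; hence $W(\alpha)$ is again a solid torus. Viewing $K\subset W$ as a knot in a solid torus admitting a surgery that yields a solid torus, Gabai's results (as invoked in the proof of Lemma~\ref{lem:FAnnulus}) show $K$ is a $0$-- or $1$--bridge braid in $W$, and the requirement that the filling reproduce the product $F\times I$ cuts this down, exactly as in that proof, to two possibilities: $K$ is the core of $W$, or $K$ is the $(2,\pm1)$--cable of the core. The core is excluded here, since then $T'$ would be parallel to $T$. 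So $K$ is the $(2,\pm1)$--cable of $K_1$, and $\alpha$ is the cabling (blackboard) slope $\lambda_b$.

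Then I would push the surgery down to the companion. By the cable--space picture together with Lemma~\ref{lem:DehnTwist}, filling $T$ along $\lambda_b$ reglues $W$ with the twist $\tau_{c_1}^{\pm4}$, where $c_1=p(K_1)$, which is precisely the effect of the $\tfrac1{\pm4}$--surgery on $K_1$. Hence $K_1$ itself admits a cosmetic surgery reproducing $(F\times I,(\partial F)\times I)$, while the exterior of $K_1$ has strictly fewer JSJ tori than that of $K$ (we have removed $T'$). By the inductive hypothesis, Theorem~\ref{thm:SurgProd} holds for $K_1$, so $K_1$ is a $0$-- or $1$--crossing knot. Its cosmetic slope is $\tfrac1{\pm4}$, which is a $\tfrac1n$--slope rather than a blackboard slope, so the slope--determination clause of the theorem forces $K_1$ to be $0$--crossing, i.e.\ isotopic to a horizontal curve $c_1\subset F\times\{\tfrac12\}$. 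Taking the $(2,\pm1)$--cable of an embedded curve introduces exactly one crossing, so $K$ is a $1$--crossing knot with blackboard slope $\lambda_b$, in agreement with Proposition~\ref{prop:MapEle}, and the induction closes.

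The main obstacle is the middle step: extracting the precise pattern and the induced surgery on the companion. Restricting the cosmetic patterns to the core and the $(2,\pm1)$--cable requires transporting the framing computation of Lemma~\ref{lem:FAnnulus} to the companion $W$—fixing the longitude $\mathcal L_W$ on $T'$ (for instance the one induced by the complementary piece $(F\times I)\setminus W$, with respect to which the original gluing gives $F\times I$) and checking that $W(\alpha)$ is standard with respect to it—and then verifying that the cabling slope fills the cable space to $W$ with exactly the twist $\tau_{c_1}^{\pm4}$. Once this framing bookkeeping is in place, the identification of the induced map with the $\tfrac1{\pm4}$--surgery on $K_1$ follows at once from Lemma~\ref{lem:DehnTwist}.
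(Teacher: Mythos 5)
Your overall strategy---locate a companion solid torus $W\supset K$ bounded by an essential I--cobordant torus, invoke Gabai's classification of surgeries on knots in solid tori to see that $W(\alpha)$ is again a solid torus, and then transfer the problem to the companion core---is the same skeleton as the paper's proof. But the middle step as you present it has a genuine circularity. To ``cut down'' the $0$-- and $1$--bridge braids in $W$ to the core and the $(2,\pm1)$--cable ``exactly as in'' Lemma~\ref{lem:FAnnulus}, you need the constraint $\Delta(\text{new meridian},\mathcal L)=1$ for a distinguished longitude $\mathcal L$ on $\partial W$; in Lemma~\ref{lem:FAnnulus} that longitude exists because $\partial W$ \emph{is} $(\partial F)\times I$, whereas for a companion torus nested inside a general $F\times I$ the only available constraint is ``the new meridian is a slope whose filling of $(F\times I)\setminus\mathrm{int}(W)$ reproduces $F\times I$'' --- and identifying those slopes is exactly the statement of Theorem~\ref{thm:SurgProd} for the companion core, which you have not yet established at that point. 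You then use the (unestablished) pattern to compute the induced slope $\tfrac1{\pm4}$ on $K_1$ and from that deduce that $K_1$ is $0$--crossing, so the pattern determination and the companion classification each rely on the other. You flag this as ``the main obstacle'' but do not resolve it, and the framing bookkeeping you defer is not routine---it is the heart of the matter.

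The paper avoids this by reversing the order and by a different bookkeeping device. Instead of inducting on JSJ tori (where one would also need to check that your base case, $T$--atoroidality in the I--cobordism sense, matches the induction quantity), it chooses the torus $R$ \emph{innermost} so that the exterior of the core of $U$ is automatically $\partial U$--atoroidal and the hypothesis applies to it directly: Gabai's trichotomy plus $V\subset N(\alpha)\cong F\times I$ shows $V$ is a solid torus, hence the core of $U$ admits a surgery preserving $(F\times I,(\partial F)\times I)$ and is therefore a $0$-- or $1$--crossing knot. Only \emph{then} is the pattern determined: in the $0$--crossing case one restricts to an annulus neighborhood $G\times I$ of the companion and applies Lemma~\ref{lem:ProdDecom} and Lemma~\ref{lem:FAnnulus} to $K$ itself inside $G\times I$, where the longitude from $\partial G$ is available and forces the $(2,\pm1)$--cable with blackboard slope; in the $1$--crossing case the blackboard frame on $\partial U$ is the new meridian, and an intersection count of the two punctured disks forces the winding number $\omega=1$, contradicting non-parallelism of $\partial U$ and $\partial\mathrm{Nd}(K)$. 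If you reorder your argument along these lines (hypothesis on the companion first, pattern second) your proof essentially becomes the paper's; as written, the pattern step does not go through.
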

\begin{proof}
By the assumption, we only need to consider the case where there
is a torus in $N=F\times I\backslash\mathrm{int}(\mathrm{Nd}(K))$
which is I-cobordant but not parallel to $\partial
\mathrm{Nd}(K)$. Let $R$ be an ``innermost'' such torus.

By \cite[Lemma~3.1]{Ni2}, $R$ bounds a solid torus $U$ in $F\times
I$, such that $K\subset U$. Since $R$ is innermost in $N$, if a
torus in $(F\times I)\backslash\mathrm{int}(U)$ is I-cobordant to
$\partial U=R$, then this torus is parallel to $R$. Let $V$ be the
manifold obtained from $U$ by $\alpha$--surgery on $K$.

By Gabai \cite{GSurg}, one of the following cases holds.

\noindent1) $V=D^2\times S^1$. In this case $K$ is a $0$--bridge
or $1$--bridge braid in $U$, and the core $K'$ of the surgery
is also a $0$--bridge or $1$--bridge braid in $V$. Moreover, $K$
and $K'$ have the same braid index $\omega$.

\noindent2) $V=(D^2\times S^1)\#W$, where $W$ is a closed
$3$--manifold and $1<|H_1(W)|<\infty$.

\noindent3) $V$ is irreducible and $\partial V$ is incompressible.

Since $V\subset N(\alpha)\cong F\times I$, Cases 2) and 3) can not
happen, so the only possible case is 1). Thus the core of $U$ is a
knot such that a surgery on the knot yields the pair
$(N(\alpha),(\partial F)\times I)$ which is homeomorphic to
$(F\times I,(\partial F)\times I)$. Moreover,
$N\backslash\mathrm{int}(U)$ is $\partial U$--atoroidal. By our
assumption, the core of $U$ is a $0$--crossing or $1$--crossing
knot in $F\times I$.

If the core of $U$ is isotopic to $\eta\times\{\frac12\}$ for
some simple closed curve $\eta\subset F$, let $G\subset F$ be a
tubular neighborhood of $\eta$, then $K$ lies in $G\times I$ after
an isotopy. Let $M=(G\times
I)\backslash\mathrm{int}(\mathrm{Nd}(K))$. By
Lemma~\ref{lem:ProdDecom}, $(M(\alpha),(\partial G)\times I)$ is
homeomorphic to $(G\times I,(\partial G)\times I)$. Applying
Lemma~\ref{lem:FAnnulus}, we find that $K$ is the
$(2,\pm1)$--cable of the core of $G\times I$, and the slope
$\alpha$ is the blackboard frame $\lambda_b$.

If the core of $U$ is a $1$--crossing knot, then the blackboard
frame $\lambda'_b$ on $\partial U$ is the meridian of $V$, so
$\lambda_b'$ cobounds a punctured disk with $\omega$ oriented
copies of $\alpha$ in $U\backslash\mathrm{int}(\mathrm{Nd}(K))$.
Moreover, the meridian $\mu'$ on $\partial U$ cobounds a punctured
disk with $\omega$ oriented copies of $\mu$ in
$U\backslash\mathrm{int}(\mathrm{Nd}(K))$. Since
$[\lambda'_b]\cdot[\mu']=1$, considering the intersection of the
two punctured disks we conclude that $\omega=1$. Hence $\partial
U$ is parallel to $\partial \mathrm{Nd}(K)$, a contradiction.
\end{proof}

In light of the above lemma, from now on we assume the exterior of
the knot $K$ is $\partial(\mathrm{Nd}(K))$--atoroidal.

\section{Comparing Euler classes of foliations}\label{sec:Euler}

Let $E$ be a maximal (up to isotopy) compact essential subsurface
of $F$, such that $K$ can be isotoped in $F\times I$ to be
disjoint from $E\times I$. Let $G=\overline{F\backslash E}$.

The goal of this section is to prove the following proposition.

\begin{prop}\label{prop:AorPants}
The subsurface $G$ is either an annulus or a pair of pants.
\end{prop}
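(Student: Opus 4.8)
The plan is to pass to a closed-up fibred manifold and compare the fibration against a taut foliation produced by sutured manifold theory. Write $N=(F\times I)\setminus\mathrm{int}(\mathrm{Nd}(K))$ with its natural taut sutured structure $\gamma=((\partial F)\times I)\cup T$, where $T=\partial(\mathrm{Nd}(K))$, $R_+(\gamma)=F\times\{1\}$ and $R_-(\gamma)=F\times\{0\}$. Since $K\subset G\times I$, each component $c$ of $\partial E=\partial G\cap\mathrm{int}(F)$ gives a product annulus $c\times I$ disjoint from $K$, and by the maximality of $E$ these curves are essential and pairwise non-parallel, so decomposition along them is taut. The piece over $E$ is the product $E\times I$, so by Lemma~\ref{lem:ProdDecom} the whole problem reduces to the taut, $T$--atoroidal sutured manifold $(N_G,\gamma_G)$ with $N_G=(G\times I)\setminus\mathrm{int}(\mathrm{Nd}(K))$; moreover, the same lemma applied to the hypothesis $(N(\alpha),(\partial F)\times I)\cong(F\times I,(\partial F)\times I)$ shows $N_G(\alpha)\cong G\times I$.

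First I would construct a taut foliation on the surgered fibred manifold. Choose a taut decomposing surface $S\subset(N_G,\gamma_G)$ with $S\cap T=\emptyset$ and extend it to a sutured manifold hierarchy, obtaining a foliation of $N_G$ with $R(\gamma_G)=G\times\{0,1\}$ among its compact leaves. As $N_G$ is $T$--atoroidal, Theorem~\ref{thm:Gabai} shows the decomposition stays taut after Dehn filling $T$ along every slope except at most one; choosing the hierarchy so that the exceptional slope is avoided (possible because $\alpha$ is a single fixed nontrivial slope) yields a taut foliation of $N_G(\alpha)\cong G\times I$ with $G\times\{0\}$ and $G\times\{1\}$ as leaves. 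Since $R_+\cong R_-\cong G$, gluing $R_+$ to $R_-$ via $\varphi_\alpha|_G$ produces, by Lemma~\ref{lem:Monodromy}, the mapping torus $Y_G(\alpha)$ of $\varphi_\alpha|_G$, and the foliation becomes a taut foliation $\mathscr F$ of $Y_G(\alpha)$ having the fibre $G$ as a compact leaf.

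Now $Y_G(\alpha)$ fibres over $S^1$ with fibre $G$; call this fibration $\mathscr E$. By Proposition~\ref{prop:EuEqual} we get $e(\mathscr F)=e(\mathscr E)$ in $H^2(Y_G(\alpha),\partial Y_G(\alpha))/\mathrm{Tors}$. I would then extract a topological bound on $G$ by pairing this common class against a spanning family of classes in $H_2(Y_G(\alpha),\partial Y_G(\alpha))$: the fibre class pairs to $\chi(G)$, while the vertical surfaces built from essential arcs and $\varphi_\alpha$--invariant curves of $G$, combined with Thurston's inequality $\chi(\overline V)\le\langle e(\mathscr F),[\overline V]\rangle$ used exactly as in the proof of Proposition~\ref{prop:EuEqual}, bound the complexity of $G$ from the side of $\mathscr F$. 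Together with the fact that the surgery returns a product, this should force $-\chi(G)\le 1$; the maximality of $E$ then excludes the degenerate outcomes (a disk, since $K$ is nontrivial, and inessential boundary curves), leaving an annulus, a pair of pants, or a one-holed torus.

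The main obstacle is this last quantitative step, and in particular ruling out the one-holed torus, so that only the annulus and pair of pants survive. I expect to treat the genus separately: a positive-genus $G$ carries a nonseparating $\varphi_\alpha$--invariant curve whose associated vertical torus has an Euler-class pairing incompatible with $\mathscr E$ unless the monodromy is trivial along it, and decomposing $G\times I$ along the corresponding product annulus would then contradict the maximality of $E$. Carrying out this genus dichotomy cleanly, and verifying that the decomposing surface $S$ can be chosen so that the induced foliation genuinely retains $G$ as a leaf after the filling, are the delicate points of the argument.
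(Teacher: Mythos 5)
Your setup (reducing along the product annuli $\partial E\times I$ to the sutured manifold over $G$, and invoking Proposition~\ref{prop:EuEqual}) matches the paper, but the step you yourself flag as ``the main obstacle'' is where all the content lives, and the mechanism you sketch for it cannot work. Pairing $e(\mathscr F)=e(\mathscr E)$ against classes of a \emph{single} surgered mapping torus gives no bound on $G$ whatsoever: Proposition~\ref{prop:EuEqual} holds for every taut foliation of every fibred manifold, independently of the topology of the fibre, so no amount of pairing against ``a spanning family'' will force $-\chi(G)\le1$. The paper instead compares the \emph{two} fibrations $\mathscr E(\infty)$ and $\mathscr E(\alpha)$ through the common knot exterior $X$, proving (Proposition~\ref{prop:SubDim1}) that the subspace $\mathcal V\subset H_1(G,\partial G;\mathbb Q)$ annihilating $i_*[K]$ has dimension at most $1$, whence $b_1(G)\le2$. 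That proof needs three ingredients your outline does not supply: (a) the dimension count of Lemma~\ref{lem:HomoOrth}, which uses $\dim\mathcal V>1$ to produce a class $[H]$, not a multiple of $[G]$ and disjoint from $K$, on which the two relative Euler classes agree; (b) taut representatives $S_1\in\mathcal S_m(+C)$, $S_2\in\mathcal S_m(-C)$ (Lemma~\ref{lem:ExistTaut}); and, decisively, (c) the strict complexity inequality $y(\mathcal S_p(+C))+y(\mathcal S_q(-C))>(p+q)y(G)$ of Lemma~\ref{lem:SumLarge}, which is exactly where the maximality of $E$ (no product disks or annuli in $(M,\gamma)$) enters quantitatively. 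The Euler-class identities then force $\chi(\overline{S_1})+\chi(\overline{S_2})=2m\chi(G)$, contradicting (c). Nothing in your proposal substitutes for (a) or (c).

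There is also a misuse of Theorem~\ref{thm:Gabai}: you cannot ``choose the hierarchy so that the exceptional slope is avoided.'' The exceptional slope is determined by the decomposing surface, not selected by you; all one knows is that a given taut $S$ survives at least one of the two fillings $\infty$ and $\alpha$, and the paper's argument is deliberately symmetric in $\xi\in\{\infty,\alpha\}$ for precisely this reason. Your exclusion of the once-punctured torus is closer in spirit to the paper's, but the contradiction there is not with the maximality of $E$ directly: one takes $C$ with $[C]\cdot i_*[K]=0$, notes the taut closed surface $H$ in the class $[C\times S^1]$ is not a torus (no product annuli), hence not taut in $X(\infty)$, hence taut in $X(\alpha)$; the monodromy then preserves the isotopy class of $C$, producing a vertical torus homologous to $H$, which contradicts the tautness of the non-torus $H$ in $X(\alpha)$.
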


Let $T=\partial(\mathrm{Nd}(K))$, $\gamma=((\partial G)\times I)\cup
T$. Let
$$N=(F\times I)\backslash\mathrm{int}(\mathrm{Nd}(K)),M=(G\times
I)\backslash\mathrm{int}(\mathrm{Nd}(K)).$$ Then the sutured
manifold $(M,\gamma)$ contains no product disks or product annuli.
For a proper surface $S\subset M$, let $\partial_i(S)=S\cap
(G\times i)$, $i=0,1$.

Let $X=(G\times S^1)\backslash\mathrm{int}(\mathrm{Nd}(K))$ be the manifold obtained from $M$ by
gluing $G\times 1$ to $G\times 0$ via the identity map of $G$. Suppose $\xi$ is
a slope on $K$. Let $N(\xi),M(\xi),X(\xi)$ be the
manifolds obtained from $N,M,X$ by $\xi$--filling on $T$,
respectively. Let $K(\xi)\subset M(\xi)$ be the core of the new
solid torus.

By Lemma~\ref{lem:ProdDecom}, $X(\xi)$ is a surface bundle over
$S^1$ with fibre $G$ when $\xi=\infty \text{ or }\alpha$. We then
let $\mathscr E(\xi)$ be the fibration of $X(\xi)$.

\begin{lem}\label{lem:TwoEulers}
$K\subset F\times I$ is as in Theorem~\ref{thm:SurgProd}. $N$ is
$T$--atoroidal. Suppose $S\subset M$ is a taut surface such that
$S\cap T=\emptyset$ and there exists a curve $C\subset F$ with
$\partial_0S=-C\times 0,\partial_1S=C\times1$. Let $\overline S\subset X$ be the surface
obtained from $S$ by gluing $\partial_0S$ to $\partial_1S$ via the
identity map. Let $\mathscr F$ be a taut foliation of $X$ induced
by $S$. Then
$$\langle e(\mathscr F),[\overline S]\rangle=\langle e(\mathscr
E(\xi)),[\overline S]\rangle=\chi(\overline S)$$ for some $\xi\in\{\infty,\alpha\}$.
\end{lem}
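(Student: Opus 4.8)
The plan is to use Theorem~\ref{thm:Gabai} to control the tautness of the decomposing surface $S$ under Dehn filling, and then apply Proposition~\ref{prop:EuEqual} to the resulting fibred manifold. The three-way equality naturally splits into two pieces: first establishing that $\langle e(\mathscr F),[\overline S]\rangle=\chi(\overline S)$, and then identifying $\chi(\overline S)$ with $\langle e(\mathscr E(\xi)),[\overline S]\rangle$ for an appropriate choice of $\xi\in\{\infty,\alpha\}$.

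For the first equality, I would proceed as follows. Since $S$ is taut in $(M,\gamma)$ and $S\cap T=\emptyset$, and since $N$ (equivalently $M$) is $T$--atoroidal, Theorem~\ref{thm:Gabai} tells us that the decomposition along $S$ remains taut after $\xi$--filling on $T$ for every slope $\xi$ except at most one. In particular, there is at least one slope $\xi\in\{\infty,\alpha\}$ (both correspond to $X(\xi)$ being a $G$--bundle over $S^1$ via Lemma~\ref{lem:ProdDecom}) for which $S$ gives a taut decomposition of $M(\xi)$, and hence of $X(\xi)$ after gluing $G\times 1$ to $G\times 0$. The foliation $\mathscr F$ on $X$ induced by $S$ is taut, transverse to $\partial X$, and has $\overline S$ as a leaf. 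Applying Thurston's inequality \cite[Corollary~1]{T} to the leaf $\overline S$ gives $\chi(\overline S)\le\langle e(\mathscr F),[\overline S]\rangle$; since $\overline S$ is a compact leaf of a taut foliation, the reverse inequality holds by norm-minimality, yielding equality.

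For the second equality, I would invoke Proposition~\ref{prop:EuEqual} applied to the fibred manifold $X(\xi)$ with fibre $G$ and the induced taut foliation: the hypothesis that $G$ (the fibre of $\mathscr E(\xi)$) is a leaf of the taut foliation is exactly what the surgery hypothesis provides, so $e(\mathscr F)$ and $e(\mathscr E(\xi))$ agree modulo torsion, and pairing with $[\overline S]$ gives $\langle e(\mathscr F),[\overline S]\rangle=\langle e(\mathscr E(\xi)),[\overline S]\rangle$. Since $\overline S$ is built from $S$ by gluing $C\times 0$ to $C\times 1$ along the identity, it is a closed-up surface whose class is carried by the fibre direction, and a direct computation of $\langle e(\mathscr E(\xi)),[\overline S]\rangle$ for the fibration reduces to Euler characteristic considerations analogous to \eqref{eq:CheckG}, giving $\chi(\overline S)$.

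The main obstacle I expect is \emph{selecting the correct slope} $\xi$ and verifying that the foliation $\mathscr F$ induced by $S$ on $X$ descends compatibly to $X(\xi)$. Theorem~\ref{thm:Gabai} only guarantees tautness after filling for all slopes \emph{except at most one}, so one must argue that the single exceptional slope cannot simultaneously spoil both $\xi=\infty$ and $\xi=\alpha$ --- this is why the conclusion is stated for \emph{some} $\xi\in\{\infty,\alpha\}$ rather than a specified one. The delicate point is ensuring that the hierarchy extending the decomposition, and the resulting taut foliation on $X$, can be capped off after the $\xi$--filling so that $\overline S$ genuinely appears as a leaf and the fibre $G$ of $\mathscr E(\xi)$ is a leaf, which is precisely the hypothesis needed to invoke Proposition~\ref{prop:EuEqual}. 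Once this compatibility is in place, the Euler class comparison is routine.
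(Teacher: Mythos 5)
Your proposal is correct and follows essentially the same route as the paper: Theorem~\ref{thm:Gabai} supplies a slope $\xi\in\{\infty,\alpha\}$ for which $S$ stays taut after filling, Proposition~\ref{prop:EuEqual} is applied to the fibred manifold $X(\xi)$, and the fact that a foliation induced by $S$ has $\overline S$ as a compact leaf gives $\langle e(\cdot),[\overline S]\rangle=\chi(\overline S)$. The one bookkeeping point to tighten is that Proposition~\ref{prop:EuEqual} must be applied to a second foliation $\mathscr F'$ of $X(\xi)$ induced by $S$ (not to $\mathscr F$ itself, whose Euler class lives in $H^2(X,\partial X)$ rather than $H^2(X(\xi),\partial X(\xi))$); the two foliations are then compared through the common value $\chi(\overline S)$, exactly as you argue for $\mathscr F$ --- no ``capping off'' of $\mathscr F$ across the filling solid torus, and no direct computation of $\langle e(\mathscr E(\xi)),[\overline S]\rangle$, is needed.
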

\begin{proof}
By Theorem~\ref{thm:Gabai}, $S$ remains taut in $M(\xi)$ for some
$\xi\in\{\infty,\alpha\}$. Let $\mathscr F'$ be a taut foliation
of $X(\xi)$ induced by $S$. By Proposition~\ref{prop:EuEqual},
$$e(\mathscr F')=e(\mathscr E(\xi))\in H^2(X(\xi),\partial
X(\xi);\mathbb Q).$$ Since both $\mathscr F$ and $\mathscr F'$ are
induced by $S$, we have
\begin{eqnarray*}
\chi(\overline S)&=&\langle e(\mathscr F),[\overline S]\rangle\\
&=& \langle e(\mathscr
F'),[\overline S]\rangle\\
&=&\langle e(\mathscr E(\xi)),[\overline S]\rangle.
\end{eqnarray*}
\end{proof}

\begin{prop}\label{prop:SubDim1}
$K\subset F\times I$ is as in Theorem~\ref{thm:SurgProd}. $N$ is
$T$--atoroidal. The inclusion $K\subset G\times I$ induces a map
$$i_*\co H_1(K;\mathbb Q)\to H_1(G;\mathbb Q).$$
Let $$\mathcal V=\{v\in H_1(G,\partial G;\mathbb Q)|\:v\cdot i_*[K]=0\}.$$ Then the
dimension of $\mathcal V$ is at most $1$.
\end{prop}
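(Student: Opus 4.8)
The plan is to argue by contradiction, assuming $\dim\mathcal V\ge 2$, and to feed every class of $\mathcal V$ into Lemma~\ref{lem:TwoEulers}. Given $v\in\mathcal V$, I would represent it by an efficient properly embedded $1$--manifold $C\subset G$. The defining condition $v\cdot i_*[K]=0$ says exactly that $[C\times I]$ has zero algebraic intersection with $K$, so this class lifts to $H_2$ of $M=(G\times I)\setminus\mathrm{int}(\mathrm{Nd}(K))$ and is represented by a surface disjoint from $T$; choosing a Thurston norm minimizer in this class with $\partial_0 S=-C\times 0$ and $\partial_1 S=C\times 1$ produces a taut surface $S=S_v$ meeting the hypotheses of Lemma~\ref{lem:TwoEulers}. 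Write $\overline S_v\subset X$ for its closed-up version and set $A_v=\langle e(\mathscr E(\infty)),[\overline S_v]\rangle$ and $B_v=\langle e(\mathscr E(\alpha)),[\overline S_v]\rangle$; via the K\"unneth decomposition of $H_2(X(\xi),\partial X(\xi))$ these are linear functions of $v$.

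The next step is to show $\chi(\overline S_v)=\min\{A_v,B_v\}$. Since $\mathscr E(\infty)$ and $\mathscr E(\alpha)$ are fibrations, hence taut foliations, of $X(\infty)$ and $X(\alpha)$, and $\overline S_v$ is disjoint from $T$ (so it survives into both fillings), Thurston's inequality \cite[Corollary~1]{T} gives $\chi(\overline S_v)\le A_v$ and $\chi(\overline S_v)\le B_v$. Lemma~\ref{lem:TwoEulers} upgrades one of these two inequalities to an equality, for whichever of $\xi\in\{\infty,\alpha\}$ keeps $S_v$ taut after filling, so indeed $\chi(\overline S_v)=\min\{A_v,B_v\}$. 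Now I would exploit orientation reversal: taking $-v\in\mathcal V$ and using $-S_v$ as its representative gives $[\overline S_{-v}]=-[\overline S_v]$, whence $A_{-v}=-A_v$ and $B_{-v}=-B_v$, while $\chi(\overline S_{-v})=\chi(\overline S_v)$. Applying the previous identity to both $v$ and $-v$ yields $\min\{A_v,B_v\}=-\max\{A_v,B_v\}$, i.e. $A_v+B_v=0$ for all $v\in\mathcal V$. Hence $\chi(\overline S_v)=-|A_v|$, with $A$ a single linear functional on $\mathcal V$.

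Finally, if $\dim\mathcal V\ge 2$ then $A$ vanishes on a nonzero subspace, so I may choose $v_0\in\mathcal V$ with $v_0\ne 0$ and $A_{v_0}=0$, giving $\chi(\overline S_{v_0})=0$. Then $S_{v_0}$ is a taut surface of Thurston norm zero whose boundary classes $\partial_0 S_{v_0}=-C_0\times 0$ and $\partial_1 S_{v_0}=C_0\times 1$ are nonzero; its components are annuli and tori, and because the boundary is nontrivial on both $G\times 0=R_-(\gamma)$ and $G\times 1=R_+(\gamma)$, at least one component is an annulus running from $R_-(\gamma)$ to $R_+(\gamma)$, that is, a product annulus. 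This contradicts the fact, noted above, that $(M,\gamma)$ contains no product annuli, and the contradiction proves $\dim\mathcal V\le 1$.

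I expect the main obstacle to be the first step: arranging, uniformly in $v$, a taut surface $S_v$ carrying the same efficient curve $C$ on both ends so that Lemma~\ref{lem:TwoEulers} applies verbatim, and verifying that $A_v$ and $B_v$ depend only on $v$ (so that the symmetry argument is legitimate). The identity $A_v+B_v=0$ coming from orientation reversal is the conceptual heart of the argument, and once it is in hand the closing product-annulus step is routine.
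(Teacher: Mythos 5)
Your strategy---pairing taut representatives of classes in $\mathcal V$ against the two Euler classes $e(\mathscr E(\infty))$, $e(\mathscr E(\alpha))$ and using linearity to manufacture a norm-zero surface, hence a product annulus---is in the spirit of the paper's proof, but the step on which everything rests, the identity $A_v+B_v=0$ obtained by orientation reversal, is unjustified and in fact false. To apply Lemma~\ref{lem:TwoEulers} to $-S_v$ you need $-S_v$ to be a \emph{taut decomposing surface} of $(M,\gamma)$, i.e.\ the sutured manifold decomposition along it must be taut. This is strictly stronger than being Thurston norm minimizing (your first step already conflates the two notions), it is not preserved under reversing the orientation of the decomposing surface, and in this setting taut decomposing surfaces with boundary $(-C\times0)\cup(C\times1)$ are only known to exist in the classes $\widetilde{\sigma}(v)+m[G]$ for $m$ \emph{sufficiently large and positive}---that is exactly the content of Lemma~\ref{lem:ExistTaut}, and the class $-[\overline S_v]$ lies outside this range. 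The failure is visible concretely: your derivation of $A_v+B_v=0$ nowhere uses $\dim\mathcal V\ge2$, so it would apply verbatim to the pair-of-pants case of Section~5 with the class $V-U=\widetilde{\sigma}([u]-[v])$, $[u]-[v]\in\mathcal V$; but the computation in Lemma~\ref{lem:DSCutPaste}, namely $x_{\infty}(V-U+m[G])=m$ and $x_{\alpha}(V-U+m[G])=|m-2|$, translates into $\langle e(\mathscr E(\infty)),V-U\rangle=0$ and $\langle e(\mathscr E(\alpha)),V-U\rangle=2$, so the two pairings do not sum to zero. (There is also the smaller issue, which you flag yourself, that $[\overline S_v]$ is only determined by $v$ up to multiples of $[G]$, so $A_v$ and $B_v$ are not functions of $v$ without a normalization that you never fix.)

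The paper uses the hypothesis $\dim\mathcal V>1$ at a different point and never needs the false symmetry. Since the functionals $\langle\varepsilon_{\infty},\cdot\rangle$ and $\langle\varepsilon_{\alpha},\cdot\rangle$ may disagree on $\widetilde{\sigma}(\mathcal V)$, one uses $\dim\mathcal V>1$ to find a single nonzero class $[H]$ on which they \emph{agree} (Lemma~\ref{lem:HomoOrth}). Lemma~\ref{lem:ExistTaut} then supplies taut decomposing surfaces $S_1\in\mathcal S_m(+C)$ and $S_2\in\mathcal S_m(-C)$ in the classes $[H]+m[G]$ and $-[H]+m[G]$ for the \emph{same large positive} $m$ (not $-[H]-m[G]$), Lemma~\ref{lem:TwoEulers} together with the agreement on $[H]$ gives $\chi(\overline{S_1})+\chi(\overline{S_2})=2m\chi(G)$, and this contradicts the strict inequality $y(\mathcal S_m(+C))+y(\mathcal S_m(-C))>2m\,y(G)$ of Lemma~\ref{lem:SumLarge}---which is where the absence of product disks and product annuli actually enters, rather than through a direct norm-zero argument. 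If you want to repair your proof, the contradiction must be extracted from two positively-wound taut surfaces via Lemma~\ref{lem:SumLarge}, not from an orientation-reversed one.
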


Let
$$\rho_{\xi}\co H^2(X,\partial X;\mathbb Q)\to H^2(X(\xi),\partial
X(\xi);\mathbb Q)$$ be the map induced by the map of pairs
$$(X(\xi),\partial X(\xi))\to(X(\xi),(\partial X(\xi))\cup K(\xi)).$$

\begin{lem}\label{lem:HomoOrth}
Notations are as in Proposition~\ref{prop:SubDim1}. If the
dimension of $\mathcal V$ is greater than $1$, then there exists a
properly embedded surface $H\subset X$ such that

1) $H$ is not a multiple of $[G]$,

2) $H\cap T=\emptyset$,

3) for any two elements
$\varepsilon_{\infty}\in\rho_{\infty}^{-1}(e(\mathscr
E(\infty))),\varepsilon_{\alpha}\in\rho_{\alpha}^{-1}(e(\mathscr
E(\alpha)))$, we have $$\langle
\varepsilon_{\infty},[H]\rangle=\langle
\varepsilon_{\alpha},[H]\rangle.$$
\end{lem}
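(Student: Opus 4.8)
The plan is to reduce property (3) to a single linear condition on a homology class and then produce that class by a dimension count; tautness will not be needed here, only homology.

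First I would observe that property (3), quantified over \emph{all} preimages, is equivalent to preimage-independence together with one equality. Indeed, if $H\cap T=\emptyset$ then for $\varepsilon_\xi,\varepsilon_\xi'\in\rho_\xi^{-1}(e(\mathscr E(\xi)))$ the difference $\varepsilon_\xi-\varepsilon_\xi'$ lies in $\ker\rho_\xi$, which under Poincar\'e--Lefschetz duality corresponds to the kernel of $H_1(X)\to H_1(X(\xi))$ and is spanned by the class of the filling meridian on $T$; since that cycle can be pushed into $T$, its intersection with a surface disjoint from $T$ vanishes, so $\langle\varepsilon_\xi,[H]\rangle$ is independent of the chosen preimage. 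Writing $W\subset H_2(X,\partial X;\mathbb Q)$ for the subspace of classes represented by surfaces disjoint from $T$, the rule $g([H])=\langle\varepsilon_\infty-\varepsilon_\alpha,[H]\rangle$ is thus a well-defined linear functional on $W$, and for such $H$ property (3) is exactly $g([H])=0$. One must first check $\rho_\xi^{-1}(e(\mathscr E(\xi)))\neq\emptyset$: this holds because the tangent plane field of $\mathscr E(\xi)$ is defined on all of $X(\xi)\supset X$ and is transverse to $T$, so its restriction to $X$ has a relative Euler class $\varepsilon_\xi\in H^2(X,\partial X)$ with $\rho_\xi(\varepsilon_\xi)=e(\mathscr E(\xi))$.

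Next I would introduce the fiber-intersection map $\Phi\co W\to H_1(G,\partial G;\mathbb Q)$, $[H]\mapsto[H\cap \overline G]$, where $\overline G=G\times 0\subset X$ is the gluing surface. Since $K$ lies in the interior of the slab $G\times I$, the surface $\overline G$ is disjoint from $T$ and is a common fiber of both fibrations $\mathscr E(\infty)$ and $\mathscr E(\alpha)$ (by Lemma~\ref{lem:ProdDecom}); hence $\Phi([\overline G])=0$, and as a consistency check $g([\overline G])=\chi(G)-\chi(G)=0$ by (\ref{eq:CheckG}). I then claim $\Phi$ maps $W$ \emph{onto} $\mathcal V$: given $v\in\mathcal V$ represented by an oriented multicurve $C\subset G$, the surface $C\times S^1$ satisfies $[C\times S^1]\cdot[K]=C\cdot i_*[K]=0$, so it can be tubed along $K$ inside $G\times(0,1)$ to an embedded surface $H\subset X$ disjoint from $T$ with $\Phi([H])=[C]=v$ (the tubing is away from $\overline G$).

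Finally I would run the dimension count. Since $\Phi\co W\twoheadrightarrow\mathcal V$, rank--nullity gives $\dim\ker\Phi=\dim W-\dim\mathcal V$, while $\ker g$ has codimension at most $1$ in $W$, so
$$\dim\Phi(\ker g)\ \ge\ \dim\ker g-\dim\ker\Phi\ \ge\ (\dim W-1)-(\dim W-\dim\mathcal V)\ =\ \dim\mathcal V-1\ \ge\ 1,$$
using $\dim\mathcal V>1$. Hence some $[H]\in\ker g$ has $\Phi([H])\neq0$; choosing an embedded, coherently oriented $T$-disjoint representative (after clearing denominators and resolving) gives the desired $H$, which satisfies (2) by construction, (3) because $g([H])=0$, and (1) because $\Phi$ kills every multiple of $[G]$ whereas $\Phi([H])\neq0$. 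The hard part, and the only place real geometry enters, is the two foundational facts above: identifying $\ker\rho_\xi$ precisely so that $g$ is genuinely well-defined on $W$, and realizing the surjectivity of $\Phi$ by tubing so that the output $H$ is an honest properly embedded surface disjoint from $T$. I expect Lemma~\ref{lem:TwoEulers} and the $T$-atoroidal hypothesis play no role in this lemma itself; they are used afterwards, when this $H$ is fed back into the proof of Proposition~\ref{prop:SubDim1} to force both Euler numbers to equal $\chi$ of a taut representative and produce a contradiction.
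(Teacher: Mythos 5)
Your proof is correct and follows essentially the same route as the paper's: the paper realizes $\mathcal V$ inside $H_2(X,\partial X;\mathbb Q)$ via the section $\widetilde{\sigma}$ (sending $C$ to $[C\times S^1]$, made disjoint from $K$), imposes the single linear condition $\langle\varepsilon'_{\infty}-\varepsilon'_{\alpha},\cdot\rangle=0$ on this subspace of dimension greater than $1$, and uses the same exact-sequence argument with $H^1(K(\xi))$ together with $[H]\cdot[T]=0$ to get independence of the choice of preimage. Your only deviations are cosmetic: you work with the fibre-intersection projection $\Phi$ rather than its section and run rank--nullity, and you spell out the non-emptiness of $\rho_{\xi}^{-1}(e(\mathscr E(\xi)))$ and the reason condition 1) follows, points the paper leaves implicit.
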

\begin{proof}
There is a natural injective map
$$\sigma\co H_1(G,\partial G)\to H_2(G\times S^1,\partial G\times S^1)$$
defined via multiplying with the $S^1$ factor. Moreover, all
elements in $\sigma(\mathcal V)$ are represented by surfaces which
are disjoint from $K$, hence $\sigma|\mathcal V$ induces an
injective map $$\widetilde{\sigma}\co\mathcal V\to H_2(X,\partial
X;\mathbb Q).$$

We pick two elements
$\varepsilon'_{\infty}\in\rho_{\infty}^{-1}(e(\mathscr
E(\infty))),\varepsilon'_{\alpha}\in\rho_{\alpha}^{-1}(e(\mathscr
E(\alpha)))$. If $\dim \mathcal V>1$, then there exists a nonzero integral
element $h\in\widetilde{\sigma}(\mathcal V)$ such
that
$$\langle \varepsilon'_{\infty},h\rangle=\langle \varepsilon'_{\alpha},h\rangle.$$
Let $H\subset X$ be a proper surface representing $h$ such that
$H\cap T=\emptyset$. We claim that this $H$ is what we need. We
only need to check 3) since the first two conditions are obvious.

From the Mayer--Vietoris sequence $$\begin{CD}
H^1(K(\xi))@>>>H^2(X,\partial X)@>\rho_{\xi}>>H^2(X(\xi),\partial
X(\xi))
\end{CD}$$
and the fact that $h\cdot[T]=0$ we conclude that $\langle
\varepsilon_{\xi},h\rangle$ does not depend on the choice of
$\varepsilon_{\xi}\in\rho^{-1}_{\xi}(e(\mathscr E(\xi)))$. Hence
3) holds.
\end{proof}

Assume the dimension of $\mathcal V$ is greater than $1$, let $H$
be a surface as in Lemma~\ref{lem:HomoOrth}, and suppose
$H\pitchfork G$. Without loss of generality, we can assume no
component of $C=H\cap G$ is nullhomologous in $H_1(G,\partial G)$,
and $H$ is efficient in $G\times S^1$, hence we can also assume
$H\cap G$ is efficient in $G$.

Let $p\in G\backslash C$ be a point. Let $\mathcal S_m(+C)$ be the
set of properly embedded oriented surfaces $S\subset G\times I$,
such that $S\cap K=\emptyset$, $\partial_0 S=-C\times0$,
$\partial_1 S=C\times1$, and the algebraic intersection number
between $S$ and $p\times I$ is $m$. Similarly, let $\mathcal
S_m(-C)$ be the set of properly embedded surfaces $S\subset
G\times I$, such that $S\cap K=\emptyset$, $\partial_0
S=C\times0$, $\partial_1 S=-C\times1$, and the algebraic
intersection number of $S$ with $p\times I$ is $m$. Since
$[C]\cdot i_{*}([K])=0$, $\mathcal S_{m}(\pm C)\ne\emptyset$.

Suppose $S\subset M$ is a properly embedded surface which is
transverse to $\partial G\times0$. For any component $S_0$ of $S$,
we define
$$y(S_0)=\max\{\frac{|S_0\cap(\partial G\times0)|}2-\chi(S_0),0\},$$
and let $y(S)$ be the sum of $y(S_i)$ with $S_i$ running over all
components of $S$. Let $y(\mathcal S_m(\pm C))$ be the minimal
value of $y(S)$ for all $S\in\mathcal S_m(\pm C)$.

\begin{lem}\label{lem:ExistTaut}
When $m$ is sufficiently large, there exist surfaces
$S_1\in\mathcal S_m(+C)$ and $S_2\in\mathcal S_m(-C)$, such that
they are taut.
\end{lem}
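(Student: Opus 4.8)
The plan is to obtain $S_1$ and $S_2$ as minimizers of the complexity $y$ inside $\mathcal S_m(+C)$ and $\mathcal S_m(-C)$, and then to show that this minimality forces tautness once $m$ is large. Existence of minimizers is immediate, since $y$ takes non-negative integer values and the two classes are nonempty. First I would normalize such a minimizer $S_1$: discard any $2$--sphere components using the irreducibility of $M$, and discard any component on which $y$ vanishes that can be isotoped off the arc $p\times I$, so that $m$ is unchanged. A compression or boundary-compression of $S_1$ can be carried out in the complement of a suitably chosen $p\times I$ and strictly lowers $y$ while keeping the surface in $\mathcal S_m(+C)$; hence a minimizer is incompressible and boundary-incompressible. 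The same normalization applies to $S_2$.

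The complexity $y$ is precisely the sutured Thurston norm of $(M,\gamma)$ associated with the sutures lying on $G\times 0$, so proving $S_1$ taut amounts to showing that the sutured manifold decomposition $(M,\gamma)\stackrel{S_1}{\rightsquigarrow}(M',\gamma')$ it defines is taut. Irreducibility of $M'$ is inherited from that of $M$ through the incompressibility of $S_1$. For the norm-minimality of $R(\gamma')$ I would run the standard argument of Gabai and Scharlemann: a norm-reducing surface in $(M',\gamma')$ could be reassembled across the two copies of $S_1$ appearing in $\partial M'$ into a surface of $\mathcal S_m(+C)$ with strictly smaller $y$, contradicting minimality. The hypothesis that $(M,\gamma)$ contains no product disk and no product annulus is exactly what excludes the degenerate, product-like minimizers for which this reassembly would fail; tautness of $(M,\gamma)$ itself, coming from the maximal choice of $E$, is used throughout.

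The role of the assumption that $m$ is large, which is also the principal obstacle, is to guarantee that the constrained minimum over $\mathcal S_m(\pm C)$ coincides with genuine tautness and is not an artifact of the rigid constraints $\partial_0 S=\mp C\times 0$, $\partial_1 S=\pm C\times 1$ together with the fixed intersection number $m$. The mechanism I would use is the sharpness of Thurston's inequality \cite[Corollary~1]{T}: for a taut foliation induced by a taut decomposition one has $\chi(\overline S)=\langle e,[\overline S]\rangle$, and on the classes $[\overline S]$ determined by $\mathcal S_m(\pm C)$ this quantity grows linearly in $m$. Matching this linear lower bound for $y$ against a linear upper bound obtained by exhibiting explicit members of $\mathcal S_m(\pm C)$ forces the minimizer to attain the bound, and hence to be taut, once $m$ exceeds an explicit threshold that can be taken the same for $+C$ and $-C$. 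The hard part is this threshold analysis: building the comparison surfaces so that they stay disjoint from $K$, carry exactly the prescribed boundary, and meet $p\times I$ exactly $m$ times, and then checking that the minimal value of $y$ really does meet the Thurston bound for large $m$.
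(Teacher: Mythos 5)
Your plan rests on the claim that a $y$--minimizer in the constrained family $\mathcal S_m(\pm C)$ automatically defines a taut sutured manifold decomposition, and the argument you sketch for this does not close up. The reassembly step is the problem: if $P$ is a norm-reducing surface for $R(\gamma')$ in $(M',\gamma')$, then gluing $P$ across the two copies of $S_1$ in $\partial M'$ produces a surface whose boundary on $G\times\{0,1\}$ is in general no longer $-C\times0\cup C\times1$ and whose algebraic intersection number with $p\times I$ is in general no longer $m$; it therefore leaves $\mathcal S_m(+C)$, and minimality of $y$ over that family yields no contradiction. Relatedly, tautness of a decomposition along a norm-minimizing surface is not automatic in Gabai's theory: the surface must first be ``conditioned'' by double-curve-summing with copies of $R(\gamma)$ (here, with copies of the fibre $G$), and this conditioning is exactly where a large multiple of $[G]$, hence a large $m$, enters. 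Finally, your proposed threshold analysis via Thurston's inequality is circular: the equality $\chi(\overline S)=\langle e(\mathscr F),[\overline S]\rangle$ is only available once one has a taut foliation, i.e.\ once one already knows the decomposition is taut, so it cannot be used to certify tautness of the minimizer.

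The paper argues in the opposite, constructive direction and avoids all of this. It chooses $k$ beyond the threshold at which the Thurston norm in $X$ becomes additive along the ray, $x([H]+(k+1)[G])=x([H]+k[G])+x(G)$, takes an efficient norm-minimizing surface $\overline Q$ in the class $[H]+k[G]$, and invokes the proof of \cite[Theorem~3.13]{G1} to conclude that cutting $\overline Q$ open along $G$ gives a taut decomposing surface of $M$; it then applies \cite[Lemma~0.6]{G2} to adjust the boundary to be exactly $-C\times0$ and $C\times1$ while preserving tautness, and finally double-curve-sums with $m-Q'\cdot(p\times I)$ copies of $G$ to land in $\mathcal S_m(+C)$. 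Thus ``$m$ sufficiently large'' is used to reach the additivity threshold and to make the number of added copies of $G$ nonnegative, not to run a minimax comparison. To repair your approach you would essentially have to reprove these conditioning and boundary-adjustment steps, so the constructive route is the one to follow here.
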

\begin{proof}
Let $x(\cdot)$ be the Thurston norm in $H_2(X,\partial X)$. There
exists $N\geq0$, such that if $k>N$, then
$x([H]+(k+1)[G])=x([H]+k[G])+x(G)$. As in the proof of Gabai
\cite[Theorem~3.13]{G1}, if $\overline Q$ is a Thurston norm
minimizing surface in the homology class $[H]+k[G]$, and
$\overline Q\cap G$ consists of essential curves in $G$, then $Q$
gives a taut decomposition of $M$, where $Q$ is obtained from
$\overline Q$ by cutting open along $\overline Q\cap G$. Moreover,
we can assume $\overline Q$ is efficient in $X$. Hence $\overline
Q\cap T=\emptyset$ and for each boundary component $\delta$ of
$G\times i$, $|\partial Q \cap\delta|=|[\partial
Q]\cdot[\delta]|$.

Now we can apply Gabai \cite[Lemma 0.6]{G2} to get a new taut
surface $Q'$ such that $\partial_0 Q'=-C\times 0,\partial_1
Q'=C\times 1$. When $m$ is sufficiently large, let $S_1$ be the
surface obtained by doing oriented cut-and-pastes of $Q'$ with
$(m-Q'\cdot (p\times I))$ copies of $G$, then $S_1\in\mathcal
S_m(+C)$ is the surface we need. Similarly, we can find the
surface $S_2\in\mathcal S_m(-C)$.
\end{proof}

\begin{correction}
In Ni \cite{Ni2}, after the statement of Proposition~3.4, the
author claims that there exists a circle or arc $C\subset G$ such
that $[C]\cdot i_*[K]=0$. This claim is not true. The correct
statement should be there exists an essential efficient curve $C$
in $G$ such that $[C]\cdot i_*[K]=0$. The proof only needs slight
changes: one can make use of the above Lemma~\ref{lem:ExistTaut}
to find taut surfaces.\qed
\end{correction}

The following result is Ni \cite[Lemma~3.6]{Ni2}, whose proof uses
the assumption that $(M,\gamma)$ contains no essential product
disks or product annuli and an argument from Gabai \cite{G3}.

\begin{lem}\label{lem:SumLarge}
For any positive integers $p,q$,
$$y(\mathcal S_p(+C))+y(\mathcal
S_q(-C))>(p+q)y(G).$$\qed
\end{lem}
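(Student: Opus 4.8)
The plan is to compare $y$--minimizing representatives of $\mathcal S_p(+C)$ and $\mathcal S_q(-C)$ against $p+q$ parallel copies of $G$ by means of an oriented double--curve sum, and then to promote the resulting weak inequality to a strict one by extracting a forbidden product region in the case of equality. First I would choose surfaces $S_1\in\mathcal S_p(+C)$ and $S_2\in\mathcal S_q(-C)$ realizing $y(\mathcal S_p(+C))$ and $y(\mathcal S_q(-C))$ (their existence follows from Lemma~\ref{lem:ExistTaut} together with the fact that $y$ takes a minimum on each class), and put them in general position. Since $\partial_0 S_1=-C\times 0$ and $\partial_0 S_2=C\times 0$ are opposite orientations of the same curve, and likewise $\partial_1 S_1=C\times 1$ and $\partial_1 S_2=-C\times 1$ on the other face, the horizontal boundaries cancel. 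Performing oriented cut-and-paste along $S_1\cap S_2$ and then capping the paired copies of $C$ yields a properly embedded surface $R\subset G\times I$ with no boundary on $(G\times 0)\cup(G\times 1)$ and with $R\cdot(p\times I)=p+q$, so that $R$ is homologous to $(p+q)[G]$. Because oriented cut-and-paste does not raise $-\chi$ and the intersection numbers with $\partial G\times 0$ behave subadditively componentwise, I obtain $y(R)\le y(S_1)+y(S_2)$.

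The next step is the lower bound $y(R)\ge(p+q)\,y(G)$. As $S_1$ and $S_2$ are disjoint from $K$, so is $R$, and $R$ may be regarded as a decomposing surface in the sutured manifold $(M,\gamma)$, whose positive and negative boundary pieces $R_{\pm}(\gamma)$ are precisely the copies $G\times 1$ and $G\times 0$. Since $G$ is norm minimizing --- it is a fibre of the fibration $X(\infty)=G\times S^1$ --- any surface representing $(p+q)[G]$ with empty horizontal boundary has $y\ge(p+q)\,y(G)$, with equality realized by $p+q$ parallel horizontal copies of $G$. Combining this with the previous paragraph already yields the weak inequality $y(\mathcal S_p(+C))+y(\mathcal S_q(-C))\ge(p+q)\,y(G)$.

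It remains to rule out equality, and this is the heart of the matter and the step I expect to be the main obstacle. If equality held, then every inequality above would be sharp: $R$ would be a $y$--minimizing representative of $(p+q)[G]$, hence isotopic to $p+q$ horizontal copies of $G$, and the oriented sum producing it would lose no Euler characteristic and no boundary arcs. I would then examine the components of $S_1$ and $S_2$ carrying the boundary curve $C$. Since the horizontal copies of $G$ have empty boundary, the curve $C$ on $G\times 0$ must be joined to the curve $C$ on $G\times 1$ through a subsurface contributing nothing to $y$, that is, by a component running from $R_-(\gamma)$ to $R_+(\gamma)$ with $y=0$. Following the argument of Gabai~\cite{G3}, such a component is a vertical annulus over $C$ when $C$ is a closed curve, or a disk meeting $s(\gamma)$ in exactly two points when $C$ is an arc; in either case it lies in $G\times I$, is disjoint from $K$, and is therefore a product annulus or a product disk in $(M,\gamma)$, contradicting the standing hypothesis that $(M,\gamma)$ contains none. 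Hence the inequality must be strict, which proves the lemma. The delicate point throughout is the last step: carefully accounting for which components of $S_1$ and $S_2$ survive the equality case, and extracting from them an honestly embedded product region rather than a mere homological relation.
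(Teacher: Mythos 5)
First, a remark on the target of comparison: the paper does not actually prove this lemma. It is quoted from \cite[Lemma~3.6]{Ni2}, with only the indication that the proof there uses the absence of essential product disks and product annuli in $(M,\gamma)$ together with an argument from Gabai \cite{G3}. So your proposal must stand on its own. Its first two paragraphs are sound in outline: taking $y$--minimizers $S_1\in\mathcal S_p(+C)$ and $S_2\in\mathcal S_q(-C)$ (which exist by Lemma~\ref{lem:ExistTaut} and integrality of $y$), forming the oriented double curve sum, and capping the oppositely oriented copies of $C$ in collars of $G\times0$ and $G\times1$ produces a surface in the class $(p+q)[G]$ disjoint from $K$; the bookkeeping for $\chi$ and for $|\cdot\cap(\partial G\times0)|$ under these operations does give the weak inequality $y(\mathcal S_p(+C))+y(\mathcal S_q(-C))\ge(p+q)y(G)$, modulo the routine step of first discarding components of positive Euler characteristic from the minimizers.

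The genuine gap is in the strictness step, which you rightly call the heart of the matter. You assert that equality would force ``the curve $C$ on $G\times0$ to be joined to the curve $C$ on $G\times1$ by a component with $y=0$,'' which you then promote to a product disk or product annulus. No argument is given for this, and none can work at the level of a single surface: the equality $y(S_1)+y(S_2)=(p+q)y(G)$ constrains only the \emph{total} cost, and is entirely consistent with $S_1$ being connected with $y(S_1)=p\,y(G)>0$ and carrying all of $-C\times0$ and $C\times1$ on its boundary. Indeed Lemma~\ref{lem:DSCutPaste} constructs exactly such a surface: a connected $S\in\mathcal S_1(+C)$ with $y(S)=1=y(G)$, containing no $y=0$ piece at all; so a minimizer realizing $p\,y(G)$ on one side need contain no product region, and the whole content of the lemma is the asymmetry between the $+C$ and $-C$ directions --- one side can be achieved with cost exactly $p\,y(G)$, and the claim is that the other side must then cost strictly more than $q\,y(G)$. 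Detecting this requires using $S_1$ and $S_2$ \emph{jointly}: in the argument of \cite{G3} invoked by \cite{Ni2}, one decomposes $(M,\gamma)$ along the double curve sum (equivalently along $S_1$ and then $S_2$), shows that equality would force the complementary pieces to be product sutured manifolds stacked between parallel copies of $G$, and only then extracts an essential product disk or annulus from the way $C$ threads through those product regions past $K$. That global step is absent from your sketch, and without it the strict inequality --- hence the lemma --- is not established. (A secondary point: even granting a $y=0$ component running from $G\times0$ to $G\times1$, it could a priori be a compressing disk, a boundary-parallel annulus, or a sphere; identifying it as an \emph{essential} product disk or annulus needs a further argument.)
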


\begin{proof}[Proof of Proposition~\ref{prop:SubDim1}]
By Lemma~\ref{lem:ExistTaut}, when $m$ is large there exist taut
surfaces $S_1\in\mathcal S_m(+C)$, $S_2\in\mathcal S_m(-C)$. By
Theorem~\ref{thm:Gabai}, $S_i$ remains taut in $M(\xi_i)$ for some
$\xi_i\in\{\infty,\alpha\}$, $i=1,2$. Let $\mathscr F_i$ be a taut
foliation of $X(\xi_i)$ induced by $S_i$.

Let $\overline{S_1},\overline{S_2}\subset X$ be the surfaces
obtained from $S_1,S_2$ by gluing $C\times 0$ to $C\times1$. We
have
$$ [\overline{S_1}]=[H]+m[G],\quad[\overline{S_2}]=-[H]+m[G]
$$
in $H_2(X,\partial X)$ and $H_2(X(\xi),\partial X(\xi))$.

We have
$$\chi(\overline{S_i})=\chi(S_i)-|\partial_0S_i|=-y(S_i),$$
and by Proposition~\ref{prop:EuEqual}
\begin{eqnarray*}
\chi(\overline{S_1})&=&\langle e(\mathscr F_1),[\overline{S_1}]\rangle\\
&=&\langle e(\mathscr E(\xi_1)),[H]+m[G]\rangle,\\
\chi(\overline{S_2})&=&\langle e(\mathscr
F_2),[\overline{S_2}]\rangle\\
&=&\langle e(\mathscr E(\xi_2)),-[H]+m[G]\rangle.
\end{eqnarray*}
By Lemma~\ref{lem:HomoOrth}, $\langle e(\mathscr
E(\xi_1)),[H]\rangle=\langle e(\mathscr E(\xi_2)),[H]\rangle$.
So
\begin{eqnarray*}
\chi(\overline{S_1})+\chi(\overline{S_2})&=&\langle
\mathscr E(\xi_1),m[G]\rangle+\langle\mathscr E(\xi_2),m[G]\rangle\\
&=&2m\chi(G),
\end{eqnarray*}
which contradicts Lemma~\ref{lem:SumLarge}.
\end{proof}

\begin{proof}[Proof of Proposition~\ref{prop:AorPants}]
By Proposition~\ref{prop:SubDim1}, $b_1(G)\le2$, so $G$ is an
annulus, a pair of pants or a genus-one surface with one boundary
component. We only need to show that the last case is not
possible.

Suppose $g(G)=1$ and $|\partial G|=1$. Let $C\subset G$ be a
simple closed curve such that $[C]\cdot i_*[K]=0$, then there exists a closed taut surface $H\subset
X$ such that $[H]=[C\times S^1]$ and
$H\cap T=\emptyset$. Since $M$ does not contain any product
annuli, $H$ is not a torus, hence $H$ is not taut in $X(\infty)$.
By Theorem~\ref{thm:Gabai}, $H$ is taut in $X(\alpha)$.

Consider the monodromy $\varphi$ of $X(\alpha)$, the surface
$H\subset X(\alpha)$ forces $\varphi_*[C]=[C]$. Since $G$ is a
once-punctured torus, $\varphi(C)$ is isotopic to $C$. Thus there
exists a torus $R\subset X(\alpha)$ such that $R\cap G=C=H\cap G$,
which implies that $[H]=[R]+m[G]$ for some integer $m$. Since $H$
is closed, $m=0$. This contradicts the facts that $H$ is taut in
$X(\alpha)$ and that $H$ is not a torus.
\end{proof}

\section{Knots in pants$\times I$}

In this section, we study the case where $G$ is a pair of pants.

The following elementary observation is stated without proof.

\begin{lem}\label{lem:EffCurve}
Suppose $C_1,C_2\subset G$ are two efficient curves consisting of
essential arcs. If they are homologous in $H_1(G,\partial G)$,
then they are isotopic.\qed
\end{lem}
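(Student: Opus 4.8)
The plan is to reduce the statement to a bookkeeping of arc types in the pair of pants $G$, using the efficiency hypothesis to pin down the combinatorics and then a standard normal-form argument to upgrade ``same combinatorial type'' to ``isotopic.'' Write $\delta_1,\delta_2,\delta_3$ for the boundary components of $G$. First I would recall the elementary classification of essential properly embedded arcs in $G$: up to isotopy there are exactly six, namely the three ``seams'' $s_{ij}$ joining two distinct components $\delta_i,\delta_j$, and the three arcs $t_i$ with both endpoints on a single $\delta_i$ (separating the other two components). Using Lefschetz duality, identify $H_1(G,\partial G;\mathbb{Z})$ with $\{(n_1,n_2,n_3)\in\mathbb{Z}^3 : n_1+n_2+n_3=0\}$, where $n_i=[C]\cdot[\delta_i]$ is the algebraic intersection number; the relation $\sum_i n_i=0$ reflects $[\delta_1]+[\delta_2]+[\delta_3]=0$ in $H_1(G)$. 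For an efficient curve the hypothesis reads $|C\cap\delta_i|=|n_i|$, so the homology class already records the geometric intersection numbers with each boundary component. If $[C_1]=[C_2]=0$ then all $n_i=0$, so both curves meet no boundary and, being made of arcs, are empty; hence we may assume the class is nonzero.

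The key step is to observe that efficiency forbids arcs of type $t_i$. Indeed, efficiency at $\delta_i$ means $|C\cap\delta_i|=|[C]\cdot[\delta_i]|$, which holds precisely when all intersection points of $C$ with a transverse push-off of $\delta_i$ carry the same sign, i.e.\ no cancellation occurs. An arc $t_i$ contributes two points on $\delta_i$ of opposite sign, equivalently $t_i$ is null-homologous in $H_1(G,\partial G)$, so its presence forces the strict inequality $|C\cap\delta_i|>|[C]\cdot[\delta_i]|$. Hence an efficient curve consisting of essential arcs is a disjoint union of seams only. Letting $a_{ij}$ denote the number of arcs of type $s_{ij}$, counting endpoints on each boundary component yields the linear system $a_{12}+a_{13}=|n_1|$, $a_{12}+a_{23}=|n_2|$, $a_{13}+a_{23}=|n_3|$, whose unique solution $a_{ij}=\tfrac12(|n_i|+|n_j|-|n_k|)$ is a nonnegative integer: nonnegativity is the triangle inequality $|n_i|+|n_j|\ge|n_k|$, which follows from $\sum_i n_i=0$, and integrality follows since $|n_1|+|n_2|+|n_3|$ equals the total number of arc endpoints and is therefore even. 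Thus the triple $(a_{12},a_{13},a_{23})$ is determined by the homology class $[C]$ alone.

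Finally I would conclude with a normal-form argument. The three seams admit pairwise disjoint representatives (the standard seams cutting $G$ into two hexagons), and arcs of a fixed type $s_{ij}$ are mutually parallel; hence for each nonnegative triple $(a_{12},a_{13},a_{23})$ there is a model multicurve consisting of that many parallel copies of each seam, unique up to isotopy. To see that an arbitrary efficient essential-arc curve is isotopic to this model, one isotopes its arcs to standard position one class at a time: any two disjoint arcs in the same isotopy class cobound a rectangle and can be made parallel by an innermost-bigon argument, and arcs of different seam types, being disjoint and individually standardizable, can be standardized simultaneously. Applying this to both $C_1$ and $C_2$, whose triples agree because $[C_1]=[C_2]$, shows that each is isotopic to the same model, hence to each other. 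I expect the main obstacle to be this last simultaneous-standardization step: care is needed to move all three seam classes into their disjoint model positions at once without reintroducing intersections, which is precisely where the planarity of $G$ and the cutting-into-disks argument do the real work.
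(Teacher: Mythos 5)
Your proof is correct; the paper states this lemma without proof as an elementary observation, and your argument (classifying the essential arcs in the pair of pants, using efficiency to rule out arcs returning to the same boundary component, and recovering the seam multiplicities $a_{ij}=\tfrac12(|n_i|+|n_j|-|n_k|)$ from the homology class before standardizing) is exactly the standard argument it leaves implicit. The only detail worth adding is that the signs of the $n_i$ also determine the orientations of all the arcs, so the isotopy between the unoriented model multicurves upgrades to an isotopy of the oriented curves $C_1$ and $C_2$.
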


Let $a,b,c$ be the three boundary components of $G$, $u,v,w$ be
three mutually disjoint oriented arcs in $G$ such that $u$
connects $b$ to $c$, $v$ connects $c$ to $a$, $w$ connects $a$ to
$b$. Then
\begin{equation}\label{eq:uvw=0}
[u]+[v]+[w]=0\in H_1(G,\partial G).
\end{equation}

\begin{lem}
None of $u,v,w$ has zero intersection number with $i_*[K]$.
\end{lem}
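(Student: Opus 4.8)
The plan is to argue by contradiction, first extracting the homological content and then trying to convert a vanishing intersection number into a product disk. Write $A=[u]\cdot i_*[K]$, $B=[v]\cdot i_*[K]$, $C=[w]\cdot i_*[K]$, where $\cdot$ is the (perfect, by Lefschetz duality) pairing $H_1(G,\partial G;\mathbb Q)\times H_1(G;\mathbb Q)\to\mathbb Q$. By bilinearity and (\ref{eq:uvw=0}) we have $A+B+C=0$. Proposition~\ref{prop:SubDim1} gives $\dim\mathcal V\le 1$; since $\dim H_1(G,\partial G;\mathbb Q)=2$ and the pairing is perfect, this forces $i_*[K]\ne 0$ and hence $\dim\mathcal V=1$. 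The three classes $[u],[v],[w]$ are pairwise linearly independent in this two-dimensional space, so they span three distinct lines, and at most one of them can lie in the single line $\mathcal V$. Thus at most one of $A,B,C$ vanishes; by $A+B+C=0$ it then suffices to rule out the case that exactly one vanishes, say $C=[w]\cdot i_*[K]=0$, so that $\mathcal V=\mathbb Q[w]$.

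The target contradiction is a product disk. The essential arc $w$ from $a$ to $b$ gives a properly embedded disk $D_w=w\times I\subset G\times I$ meeting each suture $a\times\frac12$, $b\times\frac12$ exactly once; so if $D_w$ could be made disjoint from $K$ it would be a product disk of $(M,\gamma)$, contradicting the standing hypothesis that $(M,\gamma)$ has no product disks. Put $D_w$ in minimal position with $K$ and set $k=|D_w\cap K|$; since $[w]\cdot i_*[K]=0$ the signed count is $0$, so if $k=0$ we are done. To treat $k>0$ I would re-run the machinery of Section~\ref{sec:Euler}: using Lemma~\ref{lem:ExistTaut} build taut decomposing surfaces in $M$ whose relative boundary is carried by $w$, keep them taut in $M(\xi)$ for some $\xi\in\{\infty,\alpha\}$ via Theorem~\ref{thm:Gabai}, identify the Euler class of the induced foliation of $X(\xi)$ with that of the fibration $\mathscr E(\xi)$ via Proposition~\ref{prop:EuEqual}, and then play $\xi=\infty$ against $\xi=\alpha$ exactly as in the proof of Proposition~\ref{prop:SubDim1}; the resulting Euler-class equality should contradict the strict inequality of Lemma~\ref{lem:SumLarge} unless $k=0$.

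The main obstacle is precisely the implication $k>0\Rightarrow$ contradiction, i.e. passing from algebraic to geometric disjointness. Homology alone cannot do this: a Whitehead-type clasp meets a disk in two points of opposite sign, cannot be removed by isotopy, and can even sit inside a hyperbolic (hence $T$--atoroidal) complement containing no essential annulus, so neither the $T$--atoroidal hypothesis nor the absence of product annuli suffices by itself. What must force $k=0$ is the surgery hypothesis, entering through the equality of the Euler classes of the two fibrations $\mathscr E(\infty)$ and $\mathscr E(\alpha)$ evaluated on the surface built from $w$. The delicate contrast with Proposition~\ref{prop:SubDim1} is that there $\dim\mathcal V>1$ gave room to adjust the test surface $H$ of Lemma~\ref{lem:HomoOrth}, whereas here $\dim\mathcal V=1$ leaves no homological freedom; one is therefore forced to work with the specific efficient arc $w$, and the bookkeeping of which of the three arcs is under consideration is handled by (\ref{eq:uvw=0}) together with the observation above that at most one of $A,B,C$ can vanish.
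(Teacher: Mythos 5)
There is a genuine gap: the heart of the lemma --- deriving a contradiction from, say, $[w]\cdot i_*[K]=0$ --- is never actually proved. Your first paragraph (at most one of the three intersection numbers can vanish, by Proposition~\ref{prop:SubDim1}, the relation (\ref{eq:uvw=0}) and the pairwise independence of $[u],[v],[w]$) is correct but is only bookkeeping; the paper simply assumes one of the three numbers vanishes and derives a contradiction directly. For that contradiction you propose to make the disk $w\times I$ geometrically disjoint from $K$, and, to handle nonzero geometric intersection, to ``re-run'' the Euler-class machinery of Section~\ref{sec:Euler} against Lemma~\ref{lem:SumLarge}. But as you yourself observe, that machinery hinges on Lemma~\ref{lem:HomoOrth}, whose hypothesis is precisely $\dim\mathcal V>1$; with $\dim\mathcal V=1$ there is no class on which the evaluations of $e(\mathscr E(\infty))$ and $e(\mathscr E(\alpha))$ are known to agree, so the step ``the resulting Euler-class equality should contradict the strict inequality of Lemma~\ref{lem:SumLarge}'' has no justification. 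Naming this as ``the main obstacle'' is not the same as overcoming it.

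The paper's argument avoids Euler classes entirely and is the same trick used in the once-punctured-torus case of Proposition~\ref{prop:AorPants}. If $[u]\cdot i_*[K]=0$, then $[u\times S^1]$ defines a class in $H_2(X,\partial X)$; take a taut efficient representative $H$ which, by Lemma~\ref{lem:EffCurve}, can be arranged to satisfy $H\cap G=u$. Since $(M,\gamma)$ contains no product disks, $H$ is not an annulus, hence $H$ is not taut in $X(\infty)=G\times S^1$; Theorem~\ref{thm:Gabai} then forces $H$ to be taut in $X(\alpha)$. But $X(\alpha)$ is also fibred with fibre $G$, and the existence of $H$ forces the monodromy to preserve $[u]$, hence (by Lemma~\ref{lem:EffCurve} again) to fix $u$ up to isotopy; the mapping torus of $u$ is then an annulus $A\subset X(\alpha)$ with $A\cap G=H\cap G$ and $[A]=[H]$, contradicting the tautness in $X(\alpha)$ of the non-annular $H$. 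This is the mechanism by which the surgery hypothesis converts algebraic disjointness into the forbidden product disk, and it is exactly the piece missing from your write-up.
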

\begin{proof}
The argument is similar to the once-punctured torus case of
Proposition~\ref{prop:AorPants}. Assume that $[u]\cdot i_*[K]=0$,
then there exists a closed taut surface $H\subset X$ such that
$[H]=[u\times S^1]$. We may assume that $H$ is efficient in $X$,
hence $H$ has two boundary components and $H\cap T=\emptyset$. By
Lemma~\ref{lem:EffCurve}, we may assume that $H\cap G=u$.

Since $M$ does not contain any product disks, $H$ is not an
annulus, hence $H$ is not taut in $X(\infty)=G\times S^1$. By
Theorem~\ref{thm:Gabai}, $H$ is taut in $X(\alpha)$. Let $\varphi$
be the monodromy of $X(\alpha)$, then $H$ forces $\varphi(u)$ to
be homologous hence isotopic to $u$ by Lemma~\ref{lem:EffCurve}.
Thus there exists an annulus $A\subset X(\alpha)$ such that $A\cap
G=u=H\cap G$, which implies that $[H]=[A]+m[G]$ for some integers.
Since $H$ has only two boundary components, $m=0$. This
contradicts the facts that $H$ is taut in $X(\alpha)$ and $H$ is
not an annulus.
\end{proof}

\begin{lem}
The intersection number of $i_*[K]$ with each of $u,v,w$ is $\pm1$
or $\pm2$.
\end{lem}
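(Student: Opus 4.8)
The plan is to first recast the statement homologically and then bound the resulting winding numbers by contradiction, reusing the taut-surface and Euler-class apparatus of Sections~\ref{sec:Euler} and the present section.

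First I would translate the claim. Since $H_1(G;\mathbb{Q})$ is two--dimensional and $[a]+[b]+[c]=0$, write $i_*[K]=x[b]+y[c]$. Each arc meets the two boundary circles it joins once, with opposite signs, so the intersection pairing on the pair of pants gives, up to orientation,
\[
[u]\cdot i_*[K]=x-y,\qquad [v]\cdot i_*[K]=y,\qquad [w]\cdot i_*[K]=-x,
\]
which is consistent with (\ref{eq:uvw=0}) because the three numbers sum to zero. By the previous lemma all three are nonzero, hence $x\neq0$, $y\neq0$ and $x\neq y$. Thus the assertion is \emph{equivalent} to the bounds $|x|\le2$, $|y|\le2$, $|x-y|\le2$; a short enumeration shows that, together with the nonvanishing, these force the multiset $\{|[u]\cdot i_*[K]|,|[v]\cdot i_*[K]|,|[w]\cdot i_*[K]|\}$ to be exactly $\{2,1,1\}$, matching the class $i_*[K]=[b]-[c]$ expected of a one--crossing (figure--eight) knot. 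So the whole content of the lemma is the upper bound on these windings.

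To obtain that bound I would argue by contradiction, assuming some winding, say $|x|$, is at least $3$. Let $C\subset G$ be the primitive efficient curve with $[C]\cdot i_*[K]=0$, which exists since $\mathcal V$ is one--dimensional (Proposition~\ref{prop:SubDim1}); realize $C$ so that $C\cap(u\cup v\cup w)$ is efficient, with its arc--count growing with $|x|,|y|$. For large $m$, Lemma~\ref{lem:ExistTaut} produces taut surfaces $S_1\in\mathcal S_m(+C)$ and $S_2\in\mathcal S_m(-C)$, and by Theorem~\ref{thm:Gabai} each $S_i$ remains taut after filling $T$ along some $\xi_i\in\{\infty,\alpha\}$. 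As in the preceding lemmas one rules out tautness in $X(\infty)=G\times S^1$ for the relevant surfaces using that $(M,\gamma)$ contains no product disks or product annuli, which pins down the filling slopes. Then Proposition~\ref{prop:EuEqual} and Lemma~\ref{lem:TwoEulers} give $\chi(\overline{S_i})=\langle e(\mathscr E(\xi_i)),[\overline{S_i}]\rangle$ with $[\overline{S_i}]=\pm[C\times S^1]+m[G]$, so that summing yields
\[
\chi(\overline{S_1})+\chi(\overline{S_2})=2m\chi(G)\pm\langle e(\mathscr E(\infty))-e(\mathscr E(\alpha)),[C\times S^1]\rangle .
\]
On the other hand $\chi(\overline{S_i})=-y(S_i)$, so Lemma~\ref{lem:SumLarge} forces $\chi(\overline{S_1})+\chi(\overline{S_2})<2m\chi(G)$. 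The surgery--dependent cross term $\langle e(\mathscr E(\infty))-e(\mathscr E(\alpha)),[C\times S^1]\rangle$ is governed by $x$ and $y$, and a winding of absolute value $\ge3$ makes the displayed equality incompatible with the strict inequality from Lemma~\ref{lem:SumLarge}; this is the contradiction. Hence $|x|,|y|,|x-y|\le2$, and the conclusion follows.

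The hard part will be the bookkeeping in the last step: computing the cross term $\langle e(\mathscr E(\infty))-e(\mathscr E(\alpha)),[C\times S^1]\rangle$ explicitly in terms of $x,y$ and the surgery, and verifying that the threshold is \emph{exactly} $2$ — i.e. that the Euler/$y$--function inequality is consistent with windings $\le 2$ but violated once a winding reaches $3$. Equally delicate is matching the chosen surfaces $S_1,S_2$ to the no--product--disk/annulus hypotheses so as to correctly exclude the $X(\infty)$ fillings; getting the slope assignment wrong would either collapse the argument or spuriously exclude the genuine one--crossing knots.
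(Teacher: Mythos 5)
There is a genuine gap, and the approach is also much harder than necessary. Your homological reformulation (windings $x,y$ with respect to $[b],[c]$, nonvanishing from the previous lemma, so the content is the upper bound $2$) is fine, but your proof of the upper bound is not carried out: everything hinges on the ``cross term'' $\langle e(\mathscr E(\infty))-e(\mathscr E(\alpha)),[C\times S^1]\rangle$, which you never compute, and you give no mechanism by which a winding $\ge 3$ would make it incompatible with Lemma~\ref{lem:SumLarge}. Indeed the whole point of Lemma~\ref{lem:HomoOrth} and Proposition~\ref{prop:SubDim1} is that a contradiction with Lemma~\ref{lem:SumLarge} is only available when the two Euler classes agree on the class in question; for the single generator $C$ of the one--dimensional $\mathcal V$ they need not agree, and no contradiction is forced for any winding. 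Worse, the only way the paper ever gets its hands on $e(\mathscr E(\alpha))$ (Lemma~\ref{lem:SFibre}, via the monodromy $\varphi_\alpha$ of Lemma~\ref{lem:MapEle}) is by already knowing that $K$ is a core or $(2,\pm1)$--cable in each capped-off annulus --- which is exactly the content of the present lemma. So your plan is circular as well as incomplete. A secondary problem: your claim that the no-product-disk/annulus hypothesis ``pins down the filling slopes'' for the surfaces $S_1,S_2$ does not apply to the high-complexity surfaces in $\mathcal S_m(\pm C)$ for large $m$; that argument only excludes tautness in $X(\infty)$ for annuli and tori.

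The paper's proof is a two-line reduction that you should look for whenever an arc $u$ misses a boundary component: cap off $a$ with a disk, so $G$ becomes an annulus $G_a$ and $u$ becomes a spanning arc; the surgery still preserves the pair $(G_a\times I,(\partial G_a)\times I)$, so Lemma~\ref{lem:FAnnulus} (already proved via Gabai's classification of braids in solid tori) says $K$ is the core or a $(2,\pm1)$--cable of $G_a\times I$; hence its winding number, which is exactly $[u]\cdot i_*[K]$ up to sign, is $1$ or $2$. Repeat for $v$ and $w$. This uses only the annulus case already in hand and none of the Euler-class machinery.
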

\begin{proof}
Capping off $a$ with a disk, we get an annulus $G_a$. Now
$K\subset G_a\times I$ and the $\alpha$--surgery on $K$ does not
change the homeomorphism type of the pair $(G_a\times I,(\partial
G_a)\times I)$. By the previous lemma, $K$ is nontrivial in
$G_a\times I$. By Lemma~\ref{lem:FAnnulus}, $K$ is the core or the
$(2,\pm1)$--cable in $G_a\times I$, so $i_*[K]\cdot[u]$ is $\pm1$
or $\pm2$. The same argument applies to $v$ and $w$.
\end{proof}

Using the previous two lemmas and (\ref{eq:uvw=0}), we may assume
\begin{equation}\label{eq:uvwK}
[u]\cdot i_*[K]=[v]\cdot i_*[K]=1,\quad [w]\cdot
i_*[K]=-2,\end{equation} after reversing the orientation of $K$
and renumbering $a,b,c,u,v,w$ if necessary. We give $a,b,c$ the
boundary orientation induced from $G$, then
\begin{equation}\label{eq:uvwabc}
[v]\cdot[a]=[w]\cdot [b]=-[u]\cdot[b]=[u]\cdot[c]=1.\end{equation} See
Figure~\ref{fig:Pants} for the homology class of $K$.

\begin{figure}
\begin{picture}(340,130)
\put(50,0){\scalebox{0.50}{\includegraphics*[50pt,190pt][530pt,
450pt]{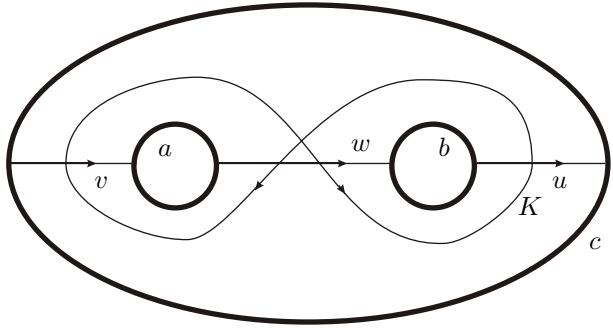}}}

\put(112,70){$a$}

\put(218,70){$b$}

\put(275,35){$c$}

\put(248,48){$K$}

\put(88,58){$v$}

\put(185,72){$w$}

\put(261,58){$u$}
\end{picture}
\caption{The homology class of $K$ in $G\times
I$}\label{fig:Pants}
\end{figure}

Let $\tau_a,\tau_b,\tau_c$ be the right-hand Dehn twists along
(parallel copies of) $a,b,c$. The mapping class group
$\mathcal{MCG}(G,\partial G)$ of $G$ is generated by
$\tau_a,\tau_b,\tau_c$. (See, for example, Farb--Margalit
\cite{FM} for preliminaries on the mapping class groups of
surfaces with boundary.) Since $a,b,c$ are disjoint,
$\mathcal{MCG}(G,\partial G)\cong \mathbb Z^3$.

\begin{lem}\label{lem:MapEle}
If $K$ is the $(2,1)$--cable in $G_c\times I$, then the map
induced by the $\alpha$--surgery is
$$\varphi_{\alpha}=\tau_a^2\tau_b^2\tau_c^{-1}.$$
If $K$ is the $(2,-1)$--cable in $G_c\times I$, then
$$\varphi_{\alpha}=\tau_a^{-2}\tau_b^{-2}\tau_c.$$
\end{lem}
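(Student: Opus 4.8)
The plan is to compute the mapping class $\varphi_{\alpha}\in\mathcal{MCG}(G,\partial G)\cong\mathbb Z^3$ directly, using the basis $\tau_a,\tau_b,\tau_c$ of right-hand Dehn twists. Since $\mathcal{MCG}(G,\partial G)$ is abelian and free of rank three, it suffices to determine the three integer exponents $(m_a,m_b,m_c)$ with $\varphi_{\alpha}=\tau_a^{m_a}\tau_b^{m_b}\tau_c^{m_c}$. The strategy is to read these exponents off from the action of $\varphi_{\alpha}$ on the first homology $H_1(G,\partial G;\mathbb Z)$, or equivalently from fractional Dehn-twisting / framing data supplied by the earlier warm-up lemmas. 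The key input is that $K$ is the $(2,\pm1)$--cable in the annulus $G_c\times I$ obtained by capping off $c$; by Lemma~\ref{lem:DehnTwist} the surgery there induces the Dehn twist power $\tau^{\pm4}$ along the core curve parallel to $c$. So I expect the $c$--exponent to be pinned down by this annular picture, and the $a$-- and $b$--exponents to come from the symmetric capping-off operations at $a$ and $b$.

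**Main steps in order.** First I would record how each generator $\tau_a,\tau_b,\tau_c$ acts on the homology classes $[u],[v],[w]\in H_1(G,\partial G)$ of the three connecting arcs, using the standard formula for the action of a boundary Dehn twist on a transverse arc together with the intersection data \eqref{eq:uvwabc}. A right-hand twist $\tau_\delta$ changes a transverse arc by adding a signed copy of $\partial$-parallel curve, so on the level of $H_1(G,\partial G)$ it sends $[u]\mapsto[u]+(\,[u]\cdot[\delta]\,)[\delta]$, and similarly for $v,w$. Second, I would compute the homological action of $\varphi_{\alpha}$ itself. The cleanest route is the three capping-off maps: capping $c$ realizes $\varphi_{\alpha}$, up to the twists supported near $c$, as the $(2,\pm1)$--cable surgery map $\tau^{\pm4}$ on $G_c$, and likewise capping $a$ and $b$ gives the corresponding annular maps. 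Matching the annular answers against the homological action of $\tau_a^{m_a}\tau_b^{m_b}\tau_c^{m_c}$ after each capping yields three linear equations for $(m_a,m_b,m_c)$. Third, I would solve this linear system; I expect it to force $(m_a,m_b,m_c)=(2,2,-1)$ in the $(2,1)$ case and $(-2,-2,1)$ in the $(2,-1)$ case, matching the claimed formulas.

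**The main obstacle.** The delicate point is the bookkeeping of signs and of the framings: ``$(2,\pm1)$--cable in $G_c\times I$'' specifies how $\varphi_\alpha$ looks after filling $c$, but I must make sure that capping off $c$ with a disk sends the blackboard framing and the twisting data to the annular model consistently with the orientation conventions fixed in \eqref{eq:uvwK} and \eqref{eq:uvwabc}. Concretely, each capping collapses one of the three $\mathbb Z$--factors, and $\tau^{\pm4}$ in $G_c$ must be correctly distributed between $\tau_a$ and $\tau_b$ (which become isotopic in $G_c$) rather than being absorbed entirely into one. Getting the factor of $2$ on both $\tau_a$ and $\tau_b$, as opposed to a $4$ on one of them, is exactly where the homology $[w]\cdot i_*[K]=-2$ of \eqref{eq:uvwK} enters: the double intersection with $K$ splits the annular twist symmetrically across the two boundary components bounding the side where $K$ doubles. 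I would verify this splitting by checking the action on all three arcs $[u],[v],[w]$ simultaneously, since the overcount will show up as an inconsistency in the overdetermined linear system and thereby certify that the distribution $(2,2,-1)$ (resp.\ $(-2,-2,1)$) is the unique solution.
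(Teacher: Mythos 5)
Your main line---cap off each boundary component in turn, identify the image of $\varphi_{\alpha}$ in each annular mapping class group via Lemma~\ref{lem:DehnTwist}, and solve the resulting linear system $p+q=4$, $p+r=1$, $q+r=1$ (with signs reversed in the $(2,-1)$ case)---is exactly the paper's proof, so the approach is essentially the same. Two cautions, however. First, the homological bookkeeping you lean on as the detection mechanism is vacuous: a Dehn twist along a boundary-parallel curve $\delta$ acts trivially on $H_1(G,\partial G)$, because $[\delta]=0$ in $H_1(G,\partial G)$; so ``checking the action on $[u],[v],[w]$'' yields $0=0$ and no equations. The exponents must be read off in the annular mapping class groups themselves (equivalently, from how arcs wind rel their endpoints, or from framings), which is what the quotient maps $q_a,q_b,q_c$ accomplish: $q_c$ kills $\tau_c$ and identifies $\tau_a$ with $\tau_b$, giving $p+q$, and similarly for the others. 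Second, to apply Lemma~\ref{lem:DehnTwist} after each capping you must know the surgery slope with respect to the canonical frame of each annular model; the paper settles this by showing that the Seifert frame $\lambda$ of $K$ in $G_{ab}\times I$ is homologous to $a-b$ in the knot exterior (using (\ref{eq:uvwK}) and (\ref{eq:uvwabc})), hence is simultaneously the canonical frame for the annuli $G_a$, $G_b$ and $G_c$, with $\alpha=\mu+\lambda$. You correctly flag this framing consistency as the delicate point; with it, and with the homological detour discarded, your argument coincides with the paper's.
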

\begin{proof}
Capping off $a,b$ with two disks, $G$ becomes a disk $G_{ab}$. $K$
has a canonical frame $\lambda$, which is null-homologous in
$(G_{ab}\times I)\backslash K$. Hence $\lambda$ is homologous to
$l[a]+m[b]$ in $M$ for some integers $l,m$. By (\ref{eq:uvwK}),
(\ref{eq:uvwabc}) we conclude that $\lambda$ is homologous to
$a-b$ in $M$. Hence $\lambda$ is also the canonical frame in
$G_c\times I$, where $G_c$ is obtained from $G$ by capping off $c$
with a disk.

Suppose $\varphi_{\alpha}=\tau_a^p\tau_b^q\tau_c^r$. If $K$ is the
$(2,1)$--cable in $G_c\times I$, then by Lemma~\ref{lem:FAnnulus},
the slope $\alpha$ is $1$ with respect to $\lambda$.

There is a natural map $$q_a\co \mathcal{MCG}(G,\partial G
)\to\mathcal{MCG}(G_a,\partial G_a),$$ where
$\mathcal{MCG}(G_a,\partial G_a)$ is generated by $\tau_b$. Since
$K$ is the core in $G_a\times I$ and the slope $\alpha$ is $1$,
$q_a(\varphi_{\alpha})$ must be $\tau_b$ by
Lemma~\ref{lem:DehnTwist}. The map $q_a$ sends both $\tau_b$ and
$\tau_c$ to $\tau_b$, and sends $\tau_a$ to $1$. So
$q_a(\varphi_{\alpha})=\tau_b^{q+r}$, thus $q+r=1$. The same
argument shows that $p+r=1$.

Now consider the natural map
$$q_c\co
\mathcal{MCG}(G,\partial G)\to\mathcal{MCG}(G_c,\partial
G_c)=\langle \tau_a\rangle.$$ By Lemma~\ref{lem:DehnTwist},
$q_c(\varphi_{\alpha})=\tau_a^4$. Hence $p+q=4$. So we conclude
that $p=q=2,r=-1$. The same argument works when $K$ is the
$(2,-1)$--cable in $G_c\times I$.
\end{proof}

Proposition~\ref{prop:MapEle} follows from the above lemma.

\begin{figure}
\begin{picture}(340,145)
\put(27,0){\scalebox{0.5}{\includegraphics*[10pt,220pt][580pt,
510pt]{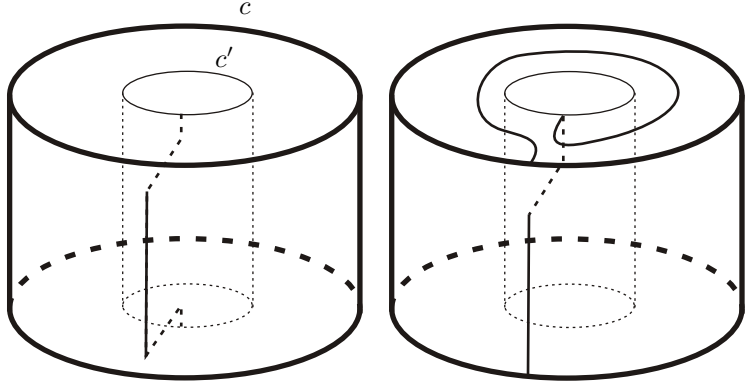}}}

\put(111,125){$c'$}

\put(120,145){$c$}

\end{picture}
\caption{Local pictures of $X(\alpha)$ near $c\times
S^1$}\label{fig:SFibre}
\end{figure}

The manifold $G\times S^1$ has a unique product structure. Let
$\omega,\omega_{\alpha}\subset c\times S^1$ be $S^1$--fibres with respect to
the product structures on $X(\infty)$ and $X(\alpha)$,
respectively.

\begin{lem}\label{lem:SFibre}
If $K$ is the $(2,1)$--cable in $G_c\times I$, then
$$[\omega_{\alpha}]=[\omega]+[c].$$
\end{lem}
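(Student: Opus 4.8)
The plan is to first pin down the shape of the answer and then compute a single framing coefficient. On the torus $T_c=c\times S^1$ both $\omega$ and $\omega_\alpha$ are $S^1$--fibres of fibrations whose fibre (a copy of $G$, in either product structure) meets $T_c$ in a curve homologous to $[c]$. Hence each of $[\omega],[\omega_\alpha]$ pairs to $\pm1$ with $[c]$ under the intersection form on $T_c$, so a priori $[\omega_\alpha]=\varepsilon[\omega]+k[c]$ with $\varepsilon=\pm1$ and $k\in\mathbb Z$. Orienting both fibrations coherently over the base $S^1$ (the surgery keeps the $S^1$ direction coherent) forces $\varepsilon=+1$, so the whole content of the lemma is the single integer $k$, and the claim is $k=1$.

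To compute $k$ I would use the explicit monodromy $\varphi_\alpha=\tau_a^2\tau_b^2\tau_c^{-1}$ from Lemma~\ref{lem:MapEle} and compare the two product structures $\mathcal P$ (giving $X(\infty)=G\times S^1$ and the coordinate $[\omega]=[S^1]$ on $T_c$) and $\mathcal P_\alpha$ (giving the mapping torus $X(\alpha)$ and the fibre $\omega_\alpha$) along a collar of $c$. The twists $\tau_a,\tau_b$ are supported on curves that can be isotoped off a collar of $c$, so they do not affect the $S^1$--fibre on $T_c$; only $\tau_c^{-1}$ together with the way the $\alpha$--surgery on the $(2,1)$--cable is reglued near $c$ can change the framing. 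Concretely, I would trace a flow line of the fibration $\mathscr E(\alpha)$ issuing from a point $p\in c$, record its homology class in the \emph{fixed} $\mathcal P$--coordinates of $T_c$, and read off how many times it winds in the $[c]$ direction relative to $\omega$. This is exactly the local situation drawn in Figure~\ref{fig:SFibre}, where the new $S^1$--fibre $c'$ is displayed next to $c$ and one reads off $[\omega_\alpha]=[\omega]+[c]$.

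As an independent check I would cap off one of the other boundary components to land in the annulus (solid torus) model, where $K$ becomes the $(2,1)$--cable of the core and Lemma~\ref{lem:DehnTwist} identifies the new ambient meridian as $\mathcal M+4\mathcal L$; transporting this framing data back out to $T_c$ should again produce the $+[c]$. The hard part will be the framing and sign bookkeeping in the previous paragraph: the Dehn twist $\tau_c^{-1}$ is left--handed and by itself would suggest a $-1$, so one must keep very careful track of the orientations of $c$, of $\omega$, of the right--hand twist convention, and of the $+1$ cabling parameter of the $(2,1)$--cable, in order to see that these combine to give $k=+1$ rather than $0$ or $-1$. Getting this single coefficient with its sign correct is the entire difficulty; everything else is bookkeeping on the torus $T_c$.
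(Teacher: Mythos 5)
Your setup is the right one and it is the paper's: reduce to $[\omega_{\alpha}]=[\omega]+k[c]$ on the torus $c\times S^1$, invoke the mapping-torus description of $X(\alpha)$ from Lemma~\ref{lem:Monodromy} together with $\varphi_{\alpha}=\tau_a^2\tau_b^2\tau_c^{-1}$ from Lemma~\ref{lem:MapEle}, observe that only the $\tau_c^{-1}$ factor is supported near $c$, and read off $k$ by following an $S^1$--fibre of $\mathcal P_{\alpha}$ near $c\times S^1$. But the proposal stops exactly where the lemma's content begins: you state yourself that a naive reading of the left-handed twist $\tau_c^{-1}$ ``would suggest a $-1$,'' and then defer the orientation bookkeeping that decides between $k=+1$, $0$, and $-1$. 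Since the lemma \emph{is} that signed integer (and it feeds directly into the Thurston norm computation $x_{\alpha}(V-U+m[G])=|m-2|$ in Lemma~\ref{lem:DSCutPaste}, where a sign error would change the conclusion), leaving it as ``careful bookkeeping to be done'' is a genuine gap rather than a detail.

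What the paper actually does to nail the coefficient is a concrete construction you would need to reproduce: take a parallel copy $c'$ of $c$ and arrange $\varphi_{\alpha}$ to be supported in the annulus between $c$ and $c'$, so that $p'\times S^1$ (for $p'\in c'$) is simultaneously an $S^1$--fibre for $\mathcal P$ and for $\mathcal P_{\alpha}$. Then isotope $p'\times S^1$ inside $X(\alpha)$ to a four-segment curve $\mathcal S=J\cup J_{\epsilon}\cup J_{1-\epsilon}\cup J'$ with $J\subset c\times S^1$ vertical, and push $\mathcal S$ down by $\epsilon$; the arc $J_{\epsilon}$ crosses the gluing locus $G\times 0\sim G\times 1$ and, by Lemma~\ref{lem:Monodromy}, reappears as $\tau_c^{-1}(J_{\epsilon})$ on $G\times 1$. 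The resulting closed curve $\mathcal S_-$ is still an $S^1$--fibre of $\mathcal P_{\alpha}$ and is visibly homologous to $[p\times S^1]+[c\times 1]$, which is where the $+[c]$ (with $c$ carrying its boundary orientation from $G$) actually comes from. Your proposed cross-check via capping off and Lemma~\ref{lem:DehnTwist} does not substitute for this: capping off a boundary component only records the image of $\varphi_{\alpha}$ in $\mathcal{MCG}$ of an annulus, which is the input already used to prove Lemma~\ref{lem:MapEle}, and it does not by itself convert into the framing of the $S^1$--fibre on $c\times S^1$ without doing the same local computation.
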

\begin{proof}
The manifold $X(\infty)$ is obtained from $G\times I$ by
identifying $(x,0)$ with $(x,1)$ for each $x\in G$. By
Lemma~\ref{lem:Monodromy}, $X(\alpha)$ is obtained from $G\times
I$ by identifying $(x,0)$ with $(\varphi_{\alpha}(x),1)$ for each
$x\in G$. Choose parallel copies of $a,b,c$ in $G$, denoted
$a',b',c'$. Let $\varphi_{\alpha}$ be supported in the three
annuli bounded by $a-a'$, $b-b'$ and $c-c'$. Pick points $p\in c,
p'\in c'$, then $p'\times S^1$ is an $S^1$--fibre of the product
structures on both $X(\infty)$ and $X(\alpha)$, while $p\times
S^1$ is an $S^1$--fibre of the product structure on $X(\infty)$.

In $X(\alpha)$, we isotope $p'\times S^1$ such that it becomes a
curve $\mathcal S$ which is the union of four segments
$J,J_{\epsilon},J_{1-\epsilon},J'$, where $J$ is a vertical
segment in the interior of $c\times I$, $J_{\epsilon}\subset
G\times\epsilon, J_{1-\epsilon}\subset G\times(1-\epsilon)$, $J'$
is a vertical segment in $c'\times S^1$. See the left hand side of
Figure~\ref{fig:SFibre}.

As on the right hand side of Figure~\ref{fig:SFibre}, we push the
previous curve $\mathcal S$ down in distance $\epsilon$ to get a
new curve $\mathcal S_-$, then $J_{\epsilon}$ becomes an arc on
$G\times1$. Using Lemma~\ref{lem:Monodromy}, this new arc is
$\varphi_{\alpha}(J_{\epsilon})=\tau^{-1}_c(J_{\epsilon})$.
$\mathcal S_-$ is a fibre of $X(\alpha)$, and it is homologous to
$[p\times S^1]+[c\times 1]$. Hence our conclusion holds.
\end{proof}

\begin{lem}\label{lem:DSCutPaste}
Let $C=v-u$. Pick a point $p\in c\backslash(\partial C)$, we can
then define $\mathcal S_m(\pm C)$ as in Section~\ref{sec:Euler}.
Then there exists a connected surface $S\in S_{1}(C)$ such that
$y(S)=1$. Moreover, Let $S'\subset G\times[0,1]$ be the surface
obtained from $-C\times I$ and $G\times0$ by oriented cut-and-pastes, then $S$ is isotopic to $S'$ in $G\times[0,1]$.
\end{lem}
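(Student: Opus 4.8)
The plan is to construct the required surface explicitly as the model $S'$ itself, so that the \emph{moreover} clause becomes a tautology. First I would record the homological input. By (\ref{eq:uvwK}) we have $[v]\cdot i_*[K]=[u]\cdot i_*[K]=1$, hence $[C]\cdot i_*[K]=0$ for $C=v-u$, which (as noted in Section~\ref{sec:Euler}) guarantees $\mathcal S_m(\pm C)\neq\emptyset$. The curve $C$ is an efficient union of the two arcs $v$ and $-u$, and cutting the pair of pants $G$ open first along $v$ and then along $u$ yields a single disk; so $C$ cuts $G$ into one disk $D$. I would then take $S'$ to be the oriented double-curve sum of the two vertical strips $-C\times I$ with the horizontal fibre $G\times 0$, resolved along $C\times 0$. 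In homology this gives $[\overline{S'}]=[C\times S^1]+[G]$ after regluing $G\times 0$ to $G\times 1$, so the algebraic intersection number of $S'$ with $p\times I$ is $[G]\cdot[p\times I]=1$ (using $p\notin C$, so the strips miss $p\times I$). A short orientation check, which is exactly what dictates the sign in $-C\times I$, shows $\partial_0 S'=-C\times 0$ and $\partial_1 S'=C\times 1$.

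Next I would compute the numerical invariants. Since $C$ cuts $G$ into the single disk $D$, the fibre sheet $G\times 0$ links the two strips into one piece, so $S'$ is connected; and because the oriented sum is additive on Euler characteristic, $\chi(\overline{S'})=\chi(C\times S^1)+\chi(G)=0+(-1)=-1$. Invoking the identity $\chi(\overline S)=\chi(S)-|\partial_0 S|=-y(S)$ from the proof of Proposition~\ref{prop:SubDim1}, together with $|\partial_0 S'|=2$, I get $\chi(S')=1$ and $y(S')=1$; thus $S'$ is a disk. (This value is forced: a connected member of $\mathcal S_1(C)$ has $\chi\le 1$ while $|\partial_0|=2$, so $y\ge 1$.) Hence, once disjointness from $K$ is arranged, $S'\in\mathcal S_1(C)$ is a connected surface with $y(S')=1$, and taking $S=S'$ settles the \emph{moreover} clause immediately.

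The main obstacle is precisely to realize $S'$ disjoint from $K$ as an embedded disk. Because $[v]\cdot i_*[K]=[u]\cdot i_*[K]=1$, the knot $K$ meets each of $v\times I$ and $u\times I$ at least once, so the naive sum meets $K$ in interior points of the strips; since $[C]\cdot i_*[K]=0$ these points cancel, consistent with $[S']\cdot[K]=[G]\cdot[K]=0$. The purpose of summing with the fibre sheet $G\times 0$ is exactly to absorb them: the cancelling pair cobounds a band along $K$, and using the horizontal room provided by $G\times 0$ one tubes $S'$ downward past $K$ by a finger move, removing the intersections without changing $[\overline{S'}]$, $\chi(S')$, or the boundary. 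I expect the delicate point to be checking that this move is carried out embeddedly, not merely that the algebraic obstruction vanishes; I would handle it by working in the ball $D\times I$ obtained by cutting $G\times I$ along $C\times I$, where $K$ becomes a family of arcs off which the required disk can be isotoped.
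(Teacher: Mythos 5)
Your homological and Euler--characteristic bookkeeping for the model surface $S'$ is correct (it is indeed a connected disk with $y=1$ meeting $p\times I$ once), but the argument has a genuine gap at the one step that carries all the content: making the surface disjoint from $K$ while keeping $\chi=1$. The intersection points of $K$ with the strips $v\times I$ and $u\times I$ cancel only algebraically. Removing a cancelling pair by tubing along the arc of $K$ between them lowers $\chi$ by $2$, so the result is no longer a disk and $y$ increases; a genuine isotopy (``finger move'') removes the pair only if that arc of $K$ and an arc on the surface cobound an embedded disk with interior disjoint from the surface, and nothing in your construction produces such a disk. Note that your argument nowhere uses the surgery hypothesis, only the homological data (\ref{eq:uvwK}), yet the conclusion cannot hold for an arbitrary knot in that homology class: by Lemma~\ref{lem:KOnR} and the end of the proof of Theorem~\ref{thm:SurgProd}, the existence of such a disk $S$ forces $K$ to be a specific one-crossing knot. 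More decisively, your construction is blind to the orientation of $C$: running it verbatim with $C$ replaced by $-C$ would give $y(\mathcal S_1(-C))\le1$ as well, hence $y(\mathcal S_1(+C))+y(\mathcal S_1(-C))\le2$, contradicting Lemma~\ref{lem:SumLarge}, which gives $>2y(G)=2$ because $(M,\gamma)$ contains no product disks or annuli. So disjointness from $K$ at this complexity cannot be achieved by general position; it must come from somewhere else.

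The paper extracts it from the surgery hypothesis via Thurston norms. One computes $x_{\infty}(V-U+m[G])=m$ and, using Lemma~\ref{lem:SFibre} (which encodes that $K$ is the $(2,1)$--cable in $G_c\times I$ and the monodromy of $X(\alpha)$), $x_{\alpha}(V-U+m[G])=|m-2|$; for $m=1$ both norms equal $1$, so Theorem~\ref{thm:Gabai} forces the norm of $V-U+[G]$ in the knot exterior $X$ itself to be $1$. A taut efficient representative is automatically disjoint from $T$, meets $G$ in a curve isotopic to $C$ by Lemma~\ref{lem:EffCurve}, and cutting it open along $C$ yields the desired $S\in\mathcal S_1(+C)$ with $y(S)=1$. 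The ``moreover'' clause is then a separate, nontrivial isotopy argument --- gluing $S$ to $C\times[0,1]$ and recognizing the resulting surface as parallel to $G\times0$ --- rather than a tautology. To repair your proof you would need to replace the finger move by exactly this norm/Dehn-filling argument.
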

\begin{proof}
For any homology class $h\in H_2(X,(\partial G)\times S^1)$, let
$x(h),x_{\infty}(h),x_{\alpha}(h)$ denote its Thurston norm in
$X,X(\infty),X(\alpha)$, respectively.

Let $U=-[u\times S^1], V=-[v\times S^1]\in H_2(G\times
S^1,(\partial G)\times S^1)$. Since $(V-U)\cdot[K]=0$, $V-U$ also
represents an element in $H_2(X,(\partial G)\times S^1)$. Note
that the Thurston norm of $h\in H_2(G\times S^1,(\partial G)\times
S^1)$ is the absolute value of its algebraic intersection number with the
$S^1$--fibre. Consider $V-U+m[G]$ for $m\ge0$, using
Lemma~\ref{lem:SFibre}, we can compute
$$x_{\infty}(V-U+m[G])=m,$$
$$x_{\alpha}(V-U+m[G])=(V-U+m[G])\cdot([\omega]+[c])=|m-2|.$$

Since $x_{\infty}(V-U+[G])=x_{\alpha}(V-U+[G])=1$,
Theorem~\ref{thm:Gabai} implies that $x(V-U+[G])=1$. Let
$\overline{S}\subset X$ be a taut surface in this homology class
such that $\overline{S}$ is efficient in $X$. Then $\overline{S}$
is disjoint from $T$. Isotope $\overline{S}$ so that it is
transverse to $G$ and its intersection with $G$ contains no
trivial loops. Now $\overline{S}\cap G$ is homologous to $C$.
Moreover, $\overline S\cap G$ can be made efficient in $G$. So
$\overline{S}\cap G$ is isotopic to $C$ by
Lemma~\ref{lem:EffCurve}. Without loss of generality, we can
assume
$$\overline{S}\cap G=C\quad\text{ and}
\quad\overline{S}\cap((\partial C)\times S^1)\subset G.$$

Cutting $\overline{S}$ open along $C$, we get a surface
$S\in\mathcal S_1(+C)$ such that $y(S)=1$. After an isotopy of $S$, we can assume the two surfaces $S,C\times[0,1]\subset G\times[0,1]$ are transverse.
Since $S\cap((\partial C)\times(0,1))=\emptyset$, $S\cap(C\times(0,1))$ consists of closed curves which bounds disks in $C\times(0,1)$.
Since $S$ is incompressible and $G\times[0,1]$ is irreducible, we can isotope $S$ such that $S\cap(C\times(0,1))=\emptyset$, hence $S\cap(C\times[0,1])=C\times\{0,1\}$.
Now we glue $S$ and $C\times[0,1]$ together along $C\times\{0,1\}$ and perturb the resulting surface slightly, then we get a connected surface $G'$ with $x(G')=1$ and $\partial G'$ is parallel to
$(\partial G)\times0$ in $(\partial G)\times[0,1]$. Hence $G'$ is parallel to $G\times0$ in $G\times[0,1]$. It follows that $S$ is isotopic to $S'$ in $G\times[0,1]$.
\end{proof}

\begin{lem}\label{lem:KOnR}
Let $S$ be the surface obtained in Lemma~\ref{lem:DSCutPaste}. Let $$G\times
I\stackrel{S}{\rightsquigarrow}(M_1(\infty),\gamma_1)$$ be the
sutured manifold decomposition associated with $S$, then
$(M_1(\infty),\gamma_1)$ is a product manifold, and there is an
ambient isotopy of $M_1(\infty)$ which takes $K$ to a curve in
$R_+(\gamma_1)$ such that the frame of $K$ specified by
$R_+(\gamma_1)$ is $\alpha$.
\end{lem}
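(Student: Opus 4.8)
The plan is to discard $S$ in favor of the explicit model surface $S'$ provided by Lemma~\ref{lem:DSCutPaste} and to read the decomposition off directly from the cut-and-paste description. Recall that $C=v-u$, and an intersection count from (\ref{eq:uvwK})--(\ref{eq:uvwabc}) shows $C$ meets $a$ once, $b$ once, and $c$ twice, so an efficient representative of $C$ consists of two essential arcs, one joining $a$ to $c$ and one joining $b$ to $c$. Cutting the pair of pants $G$ along these two arcs yields a disk, a fact I will use repeatedly; write $G_C$ for this disk. Since $S$ is isotopic to $S'$ in $G\times I$, the two surfaces define homeomorphic sutured manifold decompositions, so I may compute everything with $S'$.

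First I would establish that $(M_1(\infty),\gamma_1)$ is a product. Using the formulas for $R_\pm(\gamma')$ in the definition of a sutured manifold decomposition, $R_+(\gamma_1)$ is obtained by gluing $(G\times1)\setminus\mathrm{Nd}(C\times1)\cong G_C$ to the positive copy $S'_+\cong S'$ along the two arcs $C\times1$. Both pieces are disks and the gluing is along two arcs, so an Euler characteristic count gives $\chi(R_+(\gamma_1))=1+1-2=0$, and the same holds for $R_-(\gamma_1)$; thus $R_\pm(\gamma_1)$ are annuli and $M_1(\infty)$ is a solid torus. More usefully, the cut-and-paste description of $S'$ exhibits the product region lying between $S'$ and $G\times1$ explicitly, which identifies $(M_1(\infty),\gamma_1)$ with the product $(\mathrm{annulus})\times I$ without appeal to any classification; tautness of the decomposition (coming from Lemma~\ref{lem:DSCutPaste}) serves only as a consistency check. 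This step also reduces the pair-of-pants case to the annulus case.

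Next I would place $K$ on $R_+(\gamma_1)$. Since $S$ is disjoint from $K$, the knot survives the decomposition as a knot in the solid torus $M_1(\infty)$. I would check that $K$ is isotopic to the core of this solid torus---the cut-and-paste along the vertical flaps $C\times I$ unwinds the cabling of $K$---and hence to a curve lying on the annulus $R_+(\gamma_1)$. Under the identification of $M_1(\infty)$ with $(\mathrm{annulus})\times I$ this matches the picture of $K$ as a core after the reduction, and it is consistent with the annulus analysis of Lemma~\ref{lem:FAnnulus}.

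The hard part will be the framing computation: I must show that the frame specified by the surface $R_+(\gamma_1)$ agrees with the surgery slope $\alpha$. The delicate point is that $R_+(\gamma_1)$ is not a horizontal copy of $G$; it is assembled from the horizontal piece $G_C\times1$ and the copy $S'_+$, and $S'_+$ runs up the flaps $C\times I$ before closing off along $C\times1$. I would track the normal framing that $R_+(\gamma_1)$ induces on $K$ through this assembly, accounting for the twist contributed by each flap, and compare the result with the blackboard frame $\lambda_b$. Since the annulus analysis (Lemma~\ref{lem:FAnnulus}) already identifies $\alpha$ with $\lambda_b$ for the $(2,\pm1)$--cable, confirming that the flaps contribute exactly the expected twist will give that the induced frame is $\alpha$. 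Getting the orientations of the oriented cut-and-paste right, and pinning down the sign of the twist along each flap, is where the main care is needed.
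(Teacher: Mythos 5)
Your first step is essentially the paper's: since $S$ is isotopic in $G\times I$ to the cut-and-paste $S'$ of $-C\times I$ and $G\times0$, the decomposition $(M_1(\infty),\gamma_1)$ is a product sutured manifold with $R_\pm(\gamma_1)$ annuli, and your Euler characteristic count is a reasonable way to see this. The rest of the proposal has a genuine gap.

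The isotopy from $S$ to $S'$ supplied by Lemma~\ref{lem:DSCutPaste} takes place in $G\times I$, not in the complement of $K$; it determines the homeomorphism type of $(M_1(\infty),\gamma_1)$ but gives no control on where $K$ sits inside the resulting solid torus. So the assertion that ``the cut-and-paste along the vertical flaps unwinds the cabling of $K$'' and that $K$ becomes a core of $M_1(\infty)$ is not something you can read off the model surface $S'$: at this stage $K$ is known only through homological data and its cabling type in the capped-off solid tori $G_a\times I$, $G_b\times I$, $G_c\times I$, and a priori it could be an arbitrary knot in the solid torus $M_1(\infty)$. The same problem defeats the proposed framing computation, which presupposes an explicit picture of $K$ relative to $R_+(\gamma_1)$. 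Note also that your second and third steps never use the surgery hypothesis, which cannot be right, since the conclusion pins down the slope $\alpha$.

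What is missing is precisely the paper's main step: bring the surgery back in by showing that the sutured manifold $(M_1(\alpha),\gamma_1)$ obtained by decomposing $M(\alpha)$ along $S$ is \emph{not} taut. This follows from the norm computation $x_{\alpha}(V-U+2[G])=0<2=x_{\infty}(V-U+2[G])$ (which rests on Lemma~\ref{lem:SFibre}), applied to the surface $\overline{S''}$ obtained by summing $S$ with $G\times0$, together with Gabai's result that decomposing along product disks preserves non-tautness. Once $R(\gamma_1)$ is known not to be norm-minimizing in $M_1(\alpha)$, Theorem~\ref{thm:NormRed} applied to the product sutured manifold $M_1(\infty)$ delivers both conclusions at once: $K$ isotopes onto $R_+(\gamma_1)$ and $\alpha$ is the frame specified by that surface. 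That theorem is the engine of the lemma; without it, or a substitute for it, your argument does not close.
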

\begin{proof}
By Lemma~\ref{lem:DSCutPaste}, $S$ is obtained from
$-C\times I$ and $G\times0$ by oriented cut-and-pastes. So $(M_1(\infty),\gamma_1)$ is a product sutured manifold and
$R_+(\gamma_1)$ is an annulus.

Let $(M_1(\alpha),\gamma_1)$ be the sutured manifold obtained from
$M(\alpha)$ by decomposing along $S$. Then $M_1(\alpha)$ can also
be obtained from $M_1(\infty)$ by $\alpha$--surgery on $K$.

We claim that $M_1(\alpha)$ is not taut. In fact, let $S''$ be the
surface obtained from $S$ and $G\times0$ by oriented
cut-and-pastes. Let $\overline{S''}\subset X$ be the surface
obtained from $S''$ by gluing $\partial_0S''$ to $\partial_1S''$.
Then $x(\overline{S''})=2$ and $\overline{S''}$ represents
$V-U+2[G]$. We already computed
$$x_{\infty}(V-U+2[G])=2>x_{\alpha}(V-U+2[G])=0,$$ so $\overline{S''}$
is not taut in $X(\alpha)$. Let $M''(\alpha)$ be the non-taut
sutured manifold obtained by decomposing $X(\alpha)$ along
$\overline{S''}$.

Since $S''$ is obtained from $S$ and $G\times0$ by oriented
cut-and-pastes, and $S\cap(G\times 0)=-C\times0$ consists of two
arcs, there exist two product disks in $M''(\alpha)$ such that the
result of decomposing $M''(\alpha)$ along these two disks is
$(M_1(\alpha),\gamma_1)$. See the proof of Gabai
\cite[Theorem~3.13]{G1} for an explanation of this fact. So
$(M_1(\alpha),\gamma_1)$ is not taut by Gabai
\cite[Lemma~0.4]{G2}.

Now Theorem~\ref{thm:NormRed} implies our conclusion.
\end{proof}

\begin{figure}
\begin{picture}(340,212)
\put(72,0){\scalebox{0.4}{\includegraphics*[40pt,20pt][530pt,
550pt]{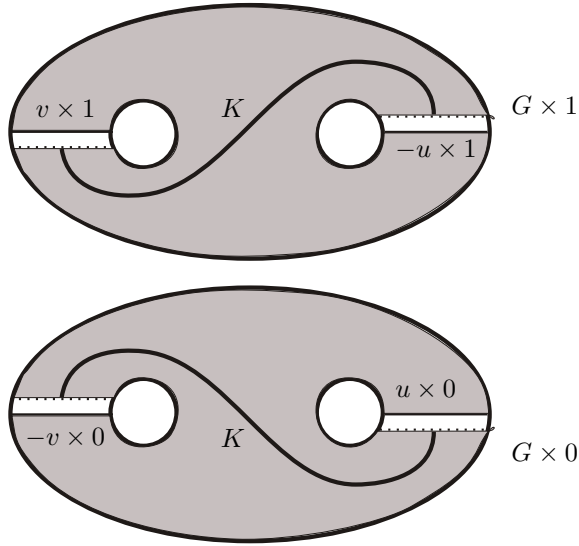}}}

\put(270,35){$G\times0$}

\put(270,166){$G\times1$}

\put(90,165){$v\times1$}

\put(86,41){$-v\times0$}

\put(226,150){$-u\times1$}

\put(226,59){$u\times0$}

\put(160,165){$K$}

\put(160,40){$K$}
\end{picture}
\caption{The surface $R_+(\gamma_1)$ containing the knot $K$
}\label{fig:DSurface}
\end{figure}

\begin{proof}[Proof of Theorem~\ref{thm:SurgProd}]
By the results in Sections~\ref{sect:WarmUp} and \ref{sec:Euler},
we only need to consider the case $F=G$ is a pair of pants. By
Lemma~\ref{lem:KOnR}, $K$ lies on $R_+(\gamma_1)$, and the frame
specified by $R_+(\gamma_1)$ is $\alpha$.

Since $R_+(\gamma_1)$ is an annulus, the only essential curve on
it is its core. As in Figure~\ref{fig:DSurface}, $R_+(\gamma_1)$
can be constructed in the following way. Cut $G\times\{0,1\}$ open
along $(v-u)\times\{0,1\}$, we get two octagons $P_0,P_1$. There
are two edges of $P_0$ which are copies of $v\times0$ with
different orientations. We call these two edges
$v\times0,-v\times0$. Similarly, there are edges $\pm u\times0,\pm
v\times1,\pm u\times1$. Now we glue two product disks to
$P_0,P_1$, such that one product disk connects $v\times0$ to
$-v\times1$ and the other connects $-u\times0$ to $u\times1$. The
annulus we get is isotopic to $R_+(\gamma_1)$. The core of this
annulus is clearly a one-crossing knot in $G\times I$. The result
about the frame also follows since the vertical projection $p\co
R_+(\gamma_1)\to G$ is an immersion.
\end{proof}

\end{document}